\theoremstyle{plain}
\newtheorem{theorem}{Theorem}
\newtheorem{proposition}[theorem]{Proposition}
\newtheorem{corollary}[theorem]{Corollary}
\theoremstyle{remark}
\newtheorem{example}{Example}
\theoremstyle{definition}
\newtheorem{definition}[theorem]{Definition}
\DeclareMathAlphabet\mathoo{U}{eur}{b}{n}
\DeclareMathOperator*{\esssup}{ess\,sup}
\begin{document}
\setcounter{page}{1}
\title[Locally nuclear operators]{Inverse-closedness of the subalgebra\\ of locally nuclear operators}

\author[E.~Yu. Guseva]{E.~Yu. Guseva}
 \address{Department of System Analysis and Control,
Voronezh State University\\ 1, Universi\-tet\-skaya Square, Voronezh 394018, Russia}
\email{elena.guseva.01.06@gmail.com}

\author{V.~G. Kurbatov}
 \address{Department of System Analysis and Control,
Voronezh State University\\ 1, Universi\-tet\-skaya Square, Voronezh 394018, Russia}
\email{kv51@inbox.ru}

\keywords{Locally nuclear operator, full subalgebra, inverse-closedness, difference operator, convolution operator, weighted norm}

\subjclass{47L80, 47B10, 35P05}

\date{\today}

\begin{abstract}
Let $X$ be a Banach space and $T$ be a bounded linear operator acting in $l_p(\mathbb Z^c,X)$, $1\le p\le\infty$. The operator $T$ is called \emph{locally nuclear} if it can be represented in the form
\begin{equation*}
(Tx)_k=\sum\limits_{m\in\mathbb Z^c} b_{km}x_{k-m},\qquad k\in\mathbb Z^c,
\end{equation*}
where $b_{km}:\,X\to X$ are nuclear,
\begin{equation*}
\lVert b_{km}\rVert_{\mathfrak S_1}\le\beta_{m},\qquad k,m\in\mathbb Z^c,
\end{equation*}
$\lVert\cdot\rVert_{\mathfrak S_1}$ is the nuclear norm,
$\beta\in l_{1}(\mathbb Z^c,\mathbb C)$ or $\beta\in l_{1,g}(\mathbb Z^c,\mathbb C)$, and $g$ is an appropriate weight on $\mathbb Z^c$. It is established that if $T$ is locally nuclear and the operator $\mathbf1+T$ is invertible, then the inverse operator $(\mathbf1+T)^{-1}$ has the form $\mathbf1+T_1$, where $T_1$ is also locally nuclear.
This result is refined for the case of operators acting in $L_p(\mathbb R^c,\mathbb C)$.
\end{abstract}

\maketitle

\section*{Introduction}\label{s:Introduction}
A bounded linear operator $A:\,X\to X$, where $X$ is a Banach space, is called~\cite{Grothendieck66,Pietch78:eng,Ruston51a,Ruston51b} \emph{nuclear} if it can be represented in the form
\begin{equation}\label{e:repr of A:0}
Ax=\sum_{i=1}^\infty a_i(x)y_i,
\end{equation}
where $y_i\in X$, $a_i\in X^*$ (here $X^*$ is the conjugate of $X$), and
\begin{equation*}
\sum_{i=1}^\infty\lVert a_i\rVert\cdot\lVert y_i\rVert<\infty.
\end{equation*}
The space $\mathfrak S_1=\mathfrak S_1(X)$ of nuclear operators is Banach with respect to the norm
\begin{equation}\label{e:nuclear norm}
\lVert A\rVert_{\mathfrak S_1}=\inf\sum_{i=1}^\infty\lVert a_i\rVert\cdot\lVert y_i\rVert,
\end{equation}
where the infimum is taken over all representations of the operator $A$ in the form~\eqref{e:repr of A:0}.

The main part of applications of operator theory to numerical mathematics~\cite{Golub-Van_Loan13:eng} is associated with finite-dimensional operators, i.~e. having a finite-dimensional image, although, possibly, acting in an infinite-dimensional space. The space $\mathfrak S_1$ of nuclear operators forms a small extension of the space of finite-dimensional operators. Nuclear operators are more convenient from the theoretical point of view, because, in contrast to finite-dimensional ones, they form a Banach space (with respect to norm~\eqref{e:nuclear norm}).

A subclass $\mathfrak T$ of the algebra of all bounded linear operators acting in a Banach space is called a \emph{subalgebra} if it is closed under addition, multiplication by scalars, and composition. If additionally $\mathfrak T$ is closed under inversion, the subalgebra $\mathfrak T$ is called \emph{full} or \emph{inverse-closed}. Many classes $\mathfrak T$ of compact operators (after adjoining the identity operator) form full subalgebras, see, e.~g.,~\cite[Theorem 2.3]{Zabrejko68:eng-rus}. The inverse-closedness of subalgebras consisting of compact operators is usually intimately related to the fact that the resolvent set of a compact operator is connected. For example, nuclear operators are compact (Proposition~\ref{p:Pietch 1.11.2}), and the subalgebra $\mathfrak S_1$ of nuclear operators is full (Theorem~\ref{t:nuclear is full}). If all operators involved in a mathematical problem belong to the same full subalgebra, then the solution of the problem usually also belongs to the same subalgebra. Thus, we obtain some qualitative information about the solution in advance, which can simplify the investigation of the problem. It is clear that the narrower the full subalgebra, the more simple and more convenient it is to work with it.

Unfortunately, operators involved in some applications are substantially non-compact. Examples of such applications are the theory of stability~\cite{Antoulas}, the theory of pseudo-differential operators~\cite{Taylor-PDE81}, and many others. Nevertheless, even in such applications, there naturally arise some classes of operators close to compact ones. This paper is devoted to two such classes $\mathoo s_{1,g}$ and $\mathoo S_{1,g}$. We call operators belonging to $\mathoo s_{1,g}$ and $\mathoo S_{1,g}$ \emph{locally nuclear}. Roughly, an operator acting in $L_p(\mathbb R^c,\mathbb C)$ is locally nuclear if its restriction to any compact subset $M\subset\mathbb R^c$ is nuclear and its memory decreases at infinity in a special way. Our main results (Theorems~\ref{t:fin:convolution dominayed:nuclear} and~\ref{t:convolution dominayed:nuclear:2}) state that locally nuclear operators form full subalgebras. We also show that a locally nuclear operator acting in the space $L_p(\mathbb R^c,\mathbb C)$, $1\le p<\infty$, admits an integral representation (Theorem~\ref{t:loc nuclear oper in L_p}).

A more precise definition of a locally nuclear operator is as follows. Let a linear operator $T$ act in $l_p(\mathbb Z^c,X)$, $1\le p\le\infty$, where $X$ is a Banach space. We say that the operator $T$ belongs to the class $\mathoo s_{1,g}$ if it can be represented in the form
\begin{equation}\label{e:operator D:0}
(Tx)_k=\sum\limits_{m\in\mathbb Z^c} b_{km}x_{k-m},\qquad k\in\mathbb Z^c,
\end{equation}
where $b_{km}:\,X\to X$ are nuclear and
\begin{equation}\label{e:beta:0}
\lVert b_{km}\rVert_{\mathfrak S_1}\le\beta_{m}
\end{equation}
for an appropriate (see Definition~\ref{def:weight}) function $\beta\in l_{1}(\mathbb Z^c,\mathbb C)$ or $\beta\in l_{1,g}(\mathbb Z^c,\mathbb C)$, where $g$ is a weight on $\mathbb Z^c$.
Next, let a linear operator $A$ act in the space $L_p(\mathbb R^c,\mathbb C)$, $1\le p\le\infty$. We represent $\mathbb R^c$ as the union of the disjoint sets:
\begin{equation*}
\mathbb R^c=\bigcup_{m=(m_1,m_2,\dots,m_c)\in\mathbb Z^c}[m_1,m_1+1)\times[m_2,m_2+1)\times\dots\times[m_c,m_c+1)
\end{equation*}
and identify the space $L_p(\mathbb R^c,\mathbb C)$ with $l_p\bigl(\mathbb Z^c,L_p([0,1)^c,\mathbb C)\bigr)$. Let $T$ be the operator acting in $l_p\bigl(\mathbb Z^c,L_p([0,1)^c,\mathbb C)\bigr)$, which corresponds to $A$ in accordance with this identification.  We say that the operator $A$ belongs to the class $\mathoo S_{1,g}$ if the operator $T$ belongs to the class $\mathoo s_{1,g}$.

The inverse-closedness of some other classes of operators possessing good proper\-ties (like compactness) only locally was investigated in~\cite{Beltita-Beltita15e,Beltita-Beltita15,Farrell-Strohmer10,
Guseva-Kurbatov20-CAOT,Kurbatov99,Kurbatov-FDE01,Kurbatov-Kuznetsova16}.
The preservation of the rate of decrease of memory (i.~e. an estimate of the kind~\eqref{e:beta:0}) while passing to the inverse operator was studied by many authors~\cite{Baskakov90:eng,Baskakov97a:eng,Baskakov2004:eng,Blatov-Terteryan92:eng,
Demko77,Demko86,Demko-Moss-Smith84,Fendler-Grochenig-Leinert08,Gohberg-Kaashoek-Woerdeman89,
Grochenig10,Grochenig-Klotz10,Grochenig-Leinert06,Jaffard90,KurbatovFA90:eng,Kurbatov90:eng,Kurbatov99,Sjostrand95}.
To make our exposition as self-contained as possible, we reproduce some of these results with proofs and in a form convenient for our purposes.

In Section~\ref{s:B-algebras}, we recall terminology, notation, and some general facts
connected with Banach algebras. In Section~\ref{s:l{1,g}} we recall necessary properties of the weighted algebra $l_{1,g}(\mathbb Z^c,\mathoo B)$. Our proofs are essentially based on the Bochner--Phillips theorem; we present its formulation in Section~\ref{s:Bochner-Phillips}. We recall the definition and basic properties of nuclear operators in Section~\ref{s:nuclear operators}; in Section~\ref{s:nuclear operators:L_p}, we refine them to the case of operators acting in $L_p$. Section~\ref{s:locally nuclear:na} is devoted to the proof of Theorem~\ref{t:fin:convolution dominayed:nuclear}, which is our main result; in Section~\ref{s:locally nuclear:na:L_p}, it is specified to the case of operators acting in $L_p$ (Theorem~\ref{t:convolution dominayed:nuclear:2}).

\section{Banach algebras}\label{s:B-algebras}
In this paper, all linear spaces are considered over the field $\mathbb C$ of complex numbers.

An \emph{algebra}~\cite[ch.~1, \S~1]{Bourbaki_Theories_Spectrales:fr},~\cite[ch.~4, \S~1.13]{Hille-Phillips:eng},~\cite[ch.~10, \S~10.1]{Rudin-FA:eng} is a linear space $\mathoo B$ (over the field $\mathbb C$ of complex numbers) endowed with a multiplication possessing the properties
\begin{gather*}
	A(BC)=(AB)C,\\
	\alpha (AB)=(\alpha A)B=A(\alpha B),\\
	(A+B)C=AC+BC,\; A(B+C)=AB+AC.
\end{gather*}
If $\mathoo B$ is a normed space and
$$\Vert AB\Vert\le\Vert A\Vert\cdot\Vert B\Vert,$$
then $\mathoo B$ is called a \emph{normed algebra}. If a normed algebra is a complete (i.~e. Banach) space, then it is called a \emph{Banach algebra}.

Let $X$ be a Banach space. We denote by $\mathoo B(X)$ the Banach algebra of all bounded linear operators acting in $X$. It is the main example of a Banach algebra.
	
If an algebra $\mathoo B$ has an element $\mathbf1$ such that
\begin{gather*}
A\mathbf1=\mathbf1A=A
\end{gather*}
for all $A\in\mathoo B$,
the element $\mathbf1$ is called a \emph{unit}.
In this case, the algebra $\mathoo B$ is called a \emph{unital algebra} or an \emph{algebra with unit}. If, in addition, the algebra $\mathoo B$ is normed (Banach) and
\begin{equation*}
\Vert\mathbf1\Vert=1,
\end{equation*}
then $\mathoo B$ is called a \emph{normed {\rm(}Banach{\rm)} unital algebra}.

Let $\mathoo B$ be a unital algebra and $A\in\mathoo B$.
An element $B\in\mathoo B$ is called the \emph{inverse} of $A$ if
\begin{equation*}
AB=BA=\mathbf1.
\end{equation*}
The inverse of $A$ is denoted by the symbol $A^{-1}$. If an element $A$ has an inverse, it is called \emph{invertible} (in the algebra $\mathoo B$).

Let $\mathoo B$ be a unital algebra and $A\in\mathoo B$. The set of all $\lambda\in\mathbb C$ such that the element $\lambda \mathbf1-A$ is not invertible is called the \emph{spectrum} of $A$ (in the algebra $\mathoo B$) and denoted by $\sigma(A)$ or $\sigma_{\mathoo B}(A)$.
The complement $\rho (A)=\rho_{\mathoo B}(A)=\mathbb C\setminus\sigma(A)$ is called the \emph{resolvent set} of $A$. The function $R_\lambda=(\lambda 1-A)^{-1}$ is called the \emph{resolvent} of $A$; the domain of definition of the resolvent is the set $\rho (A)$.

\begin{proposition} [{\cite[Theorem 10.13]{Rudin-FA:eng}\label{p:inequality spectr and norm of operator}}]
The spectrum of any element $A$ of a unital Banach algebra is a closed non-empty subset of $\mathbb C$ which is contained in the closed circle of radius $\lVert A\rVert$ centered at zero.
\end{proposition}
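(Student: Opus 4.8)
The plan is to prove the three assertions in turn: that $\sigma(A)$ lies in the closed disk of radius $\lVert A\rVert$ centered at zero, that $\sigma(A)$ is closed, and that $\sigma(A)$ is non-empty. The first two are quick consequences of the Neumann series, while the last requires a vector-valued Liouville argument, which is where the real work lies.

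First I would record the \emph{Neumann series lemma}: if $C\in\mathoo B$ and $\lVert C\rVert<1$, then $\mathbf1-C$ is invertible with $(\mathbf1-C)^{-1}=\sum_{n=0}^{\infty}C^n$, the series converging because $\mathoo B$ is complete and $\lVert C^n\rVert\le\lVert C\rVert^n$. If $|\lambda|>\lVert A\rVert$, then $\lambda\mathbf1-A=\lambda(\mathbf1-\lambda^{-1}A)$ with $\lVert\lambda^{-1}A\rVert<1$, so $\lambda\mathbf1-A$ is invertible and $\lambda\in\rho(A)$; hence $\sigma(A)\subseteq\{\lambda:|\lambda|\le\lVert A\rVert\}$. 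For closedness I would check that the set $G$ of invertible elements of $\mathoo B$ is open: if $A_0\in G$ and $\lVert A-A_0\rVert<\lVert A_0^{-1}\rVert^{-1}$, then $A=A_0\bigl(\mathbf1-A_0^{-1}(A_0-A)\bigr)$ is a product of two invertible elements by the lemma. Since $\lambda\mapsto\lambda\mathbf1-A$ is continuous, $\rho(A)$ is the preimage of the open set $G$ under this map, hence open, so $\sigma(A)=\mathbb C\setminus\rho(A)$ is closed; combined with the disk bound, $\sigma(A)$ is compact.

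The main obstacle is non-emptiness. Suppose, for contradiction, that $\sigma(A)=\emptyset$, so the resolvent $R_\lambda=(\lambda\mathbf1-A)^{-1}$ is defined for all $\lambda\in\mathbb C$. From the resolvent identity $R_\lambda-R_\mu=-(\lambda-\mu)R_\lambda R_\mu$ (obtained by multiplying both sides by $\lambda\mathbf1-A$ and $\mu\mathbf1-A$) together with continuity of inversion on $G$, the map $\lambda\mapsto R_\lambda$ is holomorphic on all of $\mathbb C$ with derivative $-R_\lambda^2$. For $|\lambda|>\lVert A\rVert$, the Neumann series gives $\lVert R_\lambda\rVert\le(|\lambda|-\lVert A\rVert)^{-1}\to0$ as $|\lambda|\to\infty$, so $\lambda\mapsto R_\lambda$ is bounded. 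Then for every bounded linear functional $\varphi\in\mathoo B^*$ the scalar function $\lambda\mapsto\varphi(R_\lambda)$ is entire and bounded, hence constant by Liouville's theorem, and since it tends to $0$ at infinity it vanishes identically. As $\varphi$ was arbitrary, the Hahn--Banach theorem forces $R_\lambda=0$ for all $\lambda$, contradicting the invertibility of $R_\lambda$ in the nonzero algebra $\mathoo B$ (nonzero because $\lVert\mathbf1\rVert=1$). Therefore $\sigma(A)\neq\emptyset$.
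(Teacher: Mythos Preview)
Your proof is correct and is the standard argument; it is essentially the proof found in Rudin~\cite[Theorem~10.13]{Rudin-FA:eng}, which is exactly what the paper cites for this proposition. The paper itself does not supply a proof of this statement --- it merely quotes the result with a reference --- so there is nothing to compare beyond noting that your Neumann-series/Liouville argument matches the cited source.
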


A subset $\mathoo R$ of an algebra $\mathoo A$ is called a \emph{subalgebra} if $\mathoo R$ is stable under the algebraic operations (addition, scalar multiplication, and multiplication), i.~e. $A+B,\lambda A,AB\in\mathoo R$ for all $A,B\in\mathoo R$ and $\lambda\in\mathbb C$.
Obviously, a subalgebra is an algebra itself. It is also evident that the closure of a subalgebra (of a normed algebra) is again a subalgebra. If a subalgebra $\mathoo R$ of a unital algebra $\mathoo A$ contains the unit of the algebra $\mathoo A$, then $\mathoo R$ is called a \emph{unital subalgebra}.

A unital subalgebra $\mathoo R$ of a unital algebra $\mathoo B$ is called \emph{full}~\cite[ch.~1,~\S~1.4]{Bourbaki_Theories_Spectrales:fr} or \emph{inverse-closed}~\cite[p.~183]{Grochenig10} if every $B\in\mathoo R$ that is invertible in $\mathoo B$ is also invertible in $\mathoo R$. This definition is equivalent to the following one: for any $B\in\mathoo R$, the existence of $B^{-1}\in\mathoo B$ such that $BB^{-1}=B^{-1}B=\mathbf1$ implies that $B^{-1}\in\mathoo R$.

\begin{theorem}[{\rm\cite[ch.~10, \S~10.18]{Rudin-FA:eng},~{\rm\cite[ch.~3, \S~2., p. 29]{Bourbaki_Theories_Spectrales:fr}}}]\label{t:notfull spectrum:Rudin}
Let $\mathoo B$ be a unital Banach algebra and $\mathoo R$ be its closed unital subalgebra. For any $R\in\mathoo R$, the set $\sigma_{\mathoo R}(R)$ is the union of $\sigma_{\mathoo B} (R)$ and a {\rm(}possibly empty{\rm)} collection of bounded components of the set $\rho_{\mathoo B}(R)$. In particular, the boundary of $\sigma_{\mathoo R} (R)$ is contained in the boundary of $\sigma_{\mathoo B}(R)$.
\end{theorem}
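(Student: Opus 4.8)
The plan is to pass to resolvent sets. Since an inverse of $\lambda\mathbf1-R$ in $\mathoo R$ is automatically an inverse in $\mathoo B$, we have $\rho_{\mathoo R}(R)\subseteq\rho_{\mathoo B}(R)$, equivalently $\sigma_{\mathoo B}(R)\subseteq\sigma_{\mathoo R}(R)$. It therefore suffices to prove that $\rho_{\mathoo R}(R)$ is a union of connected components of the open set $\rho_{\mathoo B}(R)$ and that it contains the unbounded one; taking complements then yields both assertions of the theorem.

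First I would observe that $\mathoo R$, being a closed subalgebra of the Banach algebra $\mathoo B$ containing $\mathbf1$, is itself a unital Banach algebra, so its set of invertible elements is open; as the map $\lambda\mapsto\lambda\mathbf1-R$ is continuous, $\rho_{\mathoo R}(R)$ is open in $\mathbb C$. The key step is that $\rho_{\mathoo R}(R)$ is also closed in $\rho_{\mathoo B}(R)$. Indeed, take $\lambda_n\in\rho_{\mathoo R}(R)$ with $\lambda_n\to\lambda\in\rho_{\mathoo B}(R)$. Then $(\lambda_n\mathbf1-R)^{-1}\in\mathoo R$, and since the resolvent of $R$ is continuous on $\rho_{\mathoo B}(R)$, we have $(\lambda_n\mathbf1-R)^{-1}\to(\lambda\mathbf1-R)^{-1}$ in $\mathoo B$; closedness of $\mathoo R$ forces $(\lambda\mathbf1-R)^{-1}\in\mathoo R$. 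As $\lambda\mathbf1-R$ and $(\lambda\mathbf1-R)^{-1}$ both lie in $\mathoo R$ and their product in either order is $\mathbf1$, the element $\lambda\mathbf1-R$ is invertible in $\mathoo R$, i.e. $\lambda\in\rho_{\mathoo R}(R)$. Being open and relatively closed in $\rho_{\mathoo B}(R)$, the set $\rho_{\mathoo R}(R)$ is a union of connected components of $\rho_{\mathoo B}(R)$.

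Next I would show that the unbounded component of $\rho_{\mathoo B}(R)$ is contained in $\rho_{\mathoo R}(R)$. For $\lvert\lambda\rvert>\lVert R\rVert$ the Neumann series $\sum_{k\ge0}\lambda^{-k-1}R^k$ converges in $\mathoo B$ to $(\lambda\mathbf1-R)^{-1}$; its partial sums lie in $\mathoo R$, so by closedness of $\mathoo R$ the sum does too, whence $\lambda\in\rho_{\mathoo R}(R)$. By Proposition~\ref{p:inequality spectr and norm of operator} the connected set $\{\lambda:\lvert\lambda\rvert>\lVert R\rVert\}$ lies in $\rho_{\mathoo B}(R)$ and hence in its unbounded component; since $\rho_{\mathoo R}(R)$ is a union of components and meets this one, it contains the whole unbounded component. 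Therefore $\sigma_{\mathoo R}(R)=\mathbb C\setminus\rho_{\mathoo R}(R)$ equals $\sigma_{\mathoo B}(R)$ together with precisely those bounded components of $\rho_{\mathoo B}(R)$ that are not part of $\rho_{\mathoo R}(R)$, which is the first assertion. For the boundary statement, a point $\lambda\in\partial\sigma_{\mathoo R}(R)$ lies in $\sigma_{\mathoo R}(R)$ and is a limit of points of $\rho_{\mathoo R}(R)\subseteq\rho_{\mathoo B}(R)$; it cannot belong to one of the open bounded components constituting $\sigma_{\mathoo R}(R)\setminus\sigma_{\mathoo B}(R)$, so $\lambda\in\sigma_{\mathoo B}(R)$, and being in $\overline{\rho_{\mathoo B}(R)}$ as well, it lies in $\partial\sigma_{\mathoo B}(R)$.

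I expect the only delicate point to be the relative closedness of $\rho_{\mathoo R}(R)$ in $\rho_{\mathoo B}(R)$: this is where one must combine continuity of the $\mathoo B$-resolvent with the closedness of $\mathoo R$ in $\mathoo B$, and it is exactly what prevents $\rho_{\mathoo R}(R)$ from being an arbitrary open subset and forces it to be a union of whole components. Everything else is a routine assembly of these observations.
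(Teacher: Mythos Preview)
Your argument is correct and is essentially the standard proof given in the cited reference (Rudin, Functional Analysis, \S 10.18): one shows that $\rho_{\mathoo R}(R)$ is both open and relatively closed in $\rho_{\mathoo B}(R)$, hence a union of components, and then uses the Neumann series to see that the unbounded component is among them. The paper itself does not supply a proof of this theorem but merely states it with references, so there is nothing further to compare.
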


Let $\mathoo B$ be a non-unital algebra. We consider the algebra $\widetilde {\mathoo B}$ consisting of all ordered pairs $(\alpha, A)$, where $\alpha\in\mathbb C$ and $A\in\mathoo B$, with the operations
\begin{align*}
	(\alpha, A)+(\beta, B)&=(\alpha+\beta,A+B),\\
	\lambda(\alpha,A)&=(\lambda \alpha, A),\\
	(\alpha, A)(\beta, B)&=(\alpha \beta,\alpha B+\beta A+AB).
\end{align*}
It is easy to see that $\widetilde {\mathoo B}$ is in fact an algebra and the element $(1,\mathbf0)$ is its unit. Clearly, $\mathoo B$ is isometrically isomorphic to the subalgebra of $\widetilde{\mathoo B}$ consisting of elements of the form $(0,A)$. The element $(\alpha, A)$ is usually denoted by $\lambda\mathbf1+A$.
If $\mathoo B$ is normed, a norm on $\widetilde{\mathoo B}$ can be defined by the formula
\begin{equation}\label{d:norm in alg with adjoint unit}
\lVert (\alpha, A)\rVert =|\alpha|+\lVert A\rVert.
\end{equation}
Clearly, $\widetilde{\mathoo B}$ is Banach provided that so is $\mathoo B$. The algebra $\widetilde{\mathoo B}$ is called~\cite{Bourbaki-Algebra1-3:eng}
the algebra derived from $\mathoo B$ by \emph{adjoining a unit element}.
If $\mathoo B$ is unital, we mean by $\widetilde{\mathoo B}$ the algebra $\mathoo B$ itself.

In a particular case where the algebra $\mathoo B$ is a subalgebra of a unital algebra $\mathoo A$, we identify the algebra $\widetilde{\mathoo B}$ with the subalgebra $\{\alpha \mathbf1_{\mathoo A}+B \in\mathoo A: B\in\mathoo B,\,\alpha\in\mathbb C\}$ of the algebra $\mathoo A$. We note that in this case the norm on $\widetilde {\mathoo B}$ induced by the imbedding into $\mathoo A$ is equivalent to the norm~\eqref{d:norm in alg with adjoint unit}.

Let $\mathoo A$ and $\mathoo B$ be algebras. The mapping $\varphi:\,\mathoo A\to\mathoo B$ is called~\cite[ch.~1, \S~1]{Bourbaki_Theories_Spectrales:fr} a \emph{morphism of algebras} if
\begin{align*}
	\varphi(A+B)&=\varphi(A)+\varphi(B),\\
	\varphi(\alpha A)&=\alpha\varphi(A),\\
	\varphi(AB)&=\varphi(A)\varphi(B)
\end{align*}
for all $A,B\in\mathoo A$ and $\alpha\in\mathbb C$. If the algebras $\mathoo A$ and $\mathoo B$ are unital and additionally
\begin{equation*}
\varphi(\mathbf1_{\mathoo A})=\mathbf1_{\mathoo B},
\end{equation*}
then $\varphi$ is called a \emph{morphism of unital algebras}. If the algebras $\mathoo A$ and $\mathoo B$ are normed (Banach) and the morphism $\varphi$ is continuous, then $\varphi$ is called a \emph{morphism of normed {\rm(}Banach{\rm)} algebras}. If $\varphi^{-1}$ exists and is a morphism of the same type, then $\varphi$ is called an \emph{isomorphism}; in this case, $\mathoo A$ and $\mathoo B$ are called \emph{isomorphic}.

\begin{proposition}[{\rm\cite[ch.~5, \S~2, Proposition 3]{Helemskii06:eng}}]\label{p:morphism of algebras}
Let $\mathoo A$ and $\mathoo B$ be unital algebras and $\varphi:\,\mathoo A\to\mathoo B$ be a morphism of unital algebras. If $A\in\mathoo A$ is invertible, then $\varphi{\rm(}A{\rm)}$ is also invertible.
\end{proposition}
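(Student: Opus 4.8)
The plan is to exhibit an explicit inverse for $\varphi(A)$, namely $\varphi(A^{-1})$. This is the natural candidate, since a morphism of algebras is expected to carry the multiplicative structure — and in particular inverses — across faithfully once we know it respects products and the unit.

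First I would invoke invertibility of $A$ in $\mathoo A$ to obtain the element $A^{-1}\in\mathoo A$ satisfying $AA^{-1}=A^{-1}A=\mathbf1_{\mathoo A}$. Then I would apply $\varphi$ to both of these identities. Using the multiplicativity property $\varphi(AB)=\varphi(A)\varphi(B)$ of a morphism of algebras, the left-hand sides become $\varphi(A)\varphi(A^{-1})$ and $\varphi(A^{-1})\varphi(A)$, respectively. Using the hypothesis that $\varphi$ is a morphism of \emph{unital} algebras, i.e. $\varphi(\mathbf1_{\mathoo A})=\mathbf1_{\mathoo B}$, the right-hand sides both become $\mathbf1_{\mathoo B}$. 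Hence
\begin{equation*}
\varphi(A)\varphi(A^{-1})=\varphi(A^{-1})\varphi(A)=\mathbf1_{\mathoo B},
\end{equation*}
which says precisely that $\varphi(A)$ is invertible in $\mathoo B$, with $\varphi(A)^{-1}=\varphi(A^{-1})$.

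There is no real obstacle here: the statement is a direct consequence of the defining properties of a morphism of unital algebras, and no completeness, continuity, or norm estimates are needed. The only point deserving a word of care is that one must check \emph{both} one-sided identities (or, equivalently, note that a two-sided inverse is being produced), so that the conclusion is genuine invertibility rather than merely one-sided invertibility; this is handled automatically by applying $\varphi$ to each of the two equations $AA^{-1}=\mathbf1_{\mathoo A}$ and $A^{-1}A=\mathbf1_{\mathoo A}$.
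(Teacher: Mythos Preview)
Your argument is correct and is the standard one: apply $\varphi$ to both identities $AA^{-1}=A^{-1}A=\mathbf1_{\mathoo A}$ and use multiplicativity together with $\varphi(\mathbf1_{\mathoo A})=\mathbf1_{\mathoo B}$. The paper does not supply its own proof of this proposition --- it simply cites \cite[ch.~5, \S~2, Proposition 3]{Helemskii06:eng} --- so there is nothing to compare against; your proof is exactly the expected one-line verification.
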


A linear subspace $\mathoo J$ of a Banach algebra $\mathoo B$ is called a (\emph{two-sided{\rm)} ideal} if it possesses the property:
$AJ,JA\in\mathoo J$ for all $A\in\mathoo B$ and $J\in\mathoo J$. Clearly, each ideal is a subalgebra. For any ideal $\mathoo J$, the quotient space $\mathoo B/\mathoo J$ is an algebra. An ideal $\mathoo J$ is called \emph{proper} if $\mathoo J\neq\{0\}$ and $\mathoo J\neq\mathoo B$. It is easy to see that if ($\mathoo B$ is unital and) an ideal $\mathoo J$ contains an invertible element, then $\mathoo J=\mathoo B$.

\medskip
An algebra $\mathoo B$ is called \emph{commutative} if for any $A,B\in\mathoo B$,
\begin{equation*}
	AB=BA.
\end{equation*}

Let $\mathoo B$ be a commutative unital Banach algebra. A \emph{character} of the algebra $\mathoo B$ is~\cite[ch.~1, \S~1.5]{Bourbaki_Theories_Spectrales:fr} any morphism of algebras $\chi:\mathoo B \rightarrow \mathbb{C},$ i.~e. a map $\chi: \mathoo B \rightarrow \mathbb{C}$ satisfying the conditions
\begin{equation}\label{e:char prop}
\begin{split}
\chi(A+B)&=\chi(A)+\chi(B),\\
\chi(AB)&=\chi(A)\chi(B),\\
\chi(\lambda A)&=\lambda \chi(A),\\
\chi(\mathbf1_{\mathoo B})&=1_{\mathbb{C}}
\end{split}
\end{equation}
for all $A, B \in \mathoo B$ and $\lambda \in \mathbb C$. The set of all characters of the algebra $\mathoo B$ is denoted by the symbol $X(\mathoo B)$.

If $\mathoo B$ is a commutative non-unital Banach algebra, the last condition in~\eqref{e:char prop} is omitted.

If an algebra $\mathoo B$ is non-unital, we denote by the symbol $\chi_0$ the character $\chi_0:\,\mathoo B\to\mathbb C$ that equals zero on all elements $\mathoo B$. The \emph{zero character} $\chi_0$ exists only if the algebra $\mathoo B$ is non-unital.

\begin{proposition}[{\rm\cite[ch.~1, \S~3.1, Theorem 1]{Bourbaki_Theories_Spectrales:fr}}]\label{pr:norm character}
The norm of any character of a commutative unital Banach algebra equals 1.
\end{proposition}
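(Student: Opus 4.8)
The plan is to establish the two inequalities $\lVert\chi\rVert\ge 1$ and $\lVert\chi\rVert\le 1$ separately. The lower bound is immediate from the last condition in~\eqref{e:char prop}: since $\chi(\mathbf1_{\mathoo B})=1$ and $\lVert\mathbf1_{\mathoo B}\rVert=1$ in a unital Banach algebra, we have $\lVert\chi\rVert=\sup_{\lVert A\rVert\le 1}\lvert\chi(A)\rvert\ge\lvert\chi(\mathbf1_{\mathoo B})\rvert=1$. Here $\lVert\chi\rVert$ is understood as the (possibly a~priori infinite) operator norm of the linear functional $\chi$; no continuity of $\chi$ is presupposed.

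The substance of the statement is the bound $\lvert\chi(A)\rvert\le\lVert A\rVert$ for every $A\in\mathoo B$, which yields both the continuity of $\chi$ and the estimate $\lVert\chi\rVert\le 1$. The key step is the spectral inclusion $\chi(A)\in\sigma(A)$. To see this, set $\lambda=\chi(A)$ and suppose, for contradiction, that $\lambda\mathbf1_{\mathoo B}-A$ is invertible in $\mathoo B$, with inverse $B$. Applying the morphism $\chi$ to the identity $(\lambda\mathbf1_{\mathoo B}-A)B=\mathbf1_{\mathoo B}$ and using the multiplicativity and unitality of $\chi$, we get $\bigl(\lambda-\chi(A)\bigr)\chi(B)=1$, i.~e.\ $0\cdot\chi(B)=1$, which is absurd. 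Hence $\lambda\mathbf1_{\mathoo B}-A$ is not invertible, so $\chi(A)=\lambda\in\sigma(A)$.

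It then suffices to invoke Proposition~\ref{p:inequality spectr and norm of operator}: $\sigma(A)$ is contained in the closed disk of radius $\lVert A\rVert$ centered at the origin, whence $\lvert\chi(A)\rvert\le\lVert A\rVert$. Taking the supremum over $\lVert A\rVert\le 1$ gives $\lVert\chi\rVert\le 1$, and combining with the first step we conclude $\lVert\chi\rVert=1$.

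The only subtlety — and the reason the statement is not wholly trivial — is that a character is defined purely algebraically, with no continuity built in; the argument recovers continuity as a by-product, the whole weight resting on the elementary inclusion $\chi(A)\in\sigma(A)$ together with the a~priori spectral bound of Proposition~\ref{p:inequality spectr and norm of operator}. I do not anticipate any serious obstacle beyond keeping this logical order straight.
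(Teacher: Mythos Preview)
Your proof is correct and follows the classical route: the spectral inclusion $\chi(A)\in\sigma(A)$ together with Proposition~\ref{p:inequality spectr and norm of operator} gives $\lvert\chi(A)\rvert\le\lVert A\rVert$, and the unitality of $\chi$ supplies the reverse inequality. The paper itself does not prove this proposition; it simply cites it from Bourbaki~\cite[ch.~1, \S~3.1, Theorem~1]{Bourbaki_Theories_Spectrales:fr}, where the argument is essentially the one you have given.
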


\begin{proposition}\label{p:char and adj unit}
Any character of a non-unital algebra $\mathoo B$ can be continuously extended to a character of the algebra $\widetilde{\mathoo B}$ derived from $\mathoo B$ by adjoining a unit element{\rm;} the extension is given by the formula $\chi(\lambda\mathbf1+A)=\lambda+\chi(A)$. Conversely, the restriction of any character of the algebra $\widetilde{\mathoo B}$ to $\mathoo B$ is a character of the algebra $\mathoo B$. In particular, the zero character $\chi_0$ is the restriction of the character \mbox{$\lambda\mathbf1+A\mapsto\lambda$.}
\end{proposition}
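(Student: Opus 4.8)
The plan is to verify both assertions by unwinding the definitions of the operations on $\widetilde{\mathoo B}$, and to obtain the required continuity automatically from Proposition~\ref{pr:norm character}.

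First I would handle the extension. Given a character $\chi$ of $\mathoo B$, I define $\widetilde\chi\colon\widetilde{\mathoo B}\to\mathbb C$ by $\widetilde\chi(\lambda\mathbf1+A)=\lambda+\chi(A)$; this is well defined because, $\mathoo B$ being non-unital, every element of $\widetilde{\mathoo B}$ is uniquely of the form $\lambda\mathbf1+A=(\lambda,A)$. Additivity and homogeneity of $\widetilde\chi$ are immediate from those of $\chi$ and the definition of the linear operations on $\widetilde{\mathoo B}$. For multiplicativity I would substitute the product rule $(\alpha,A)(\beta,B)=(\alpha\beta,\alpha B+\beta A+AB)$ and use additivity, homogeneity and multiplicativity of $\chi$ to obtain
\[
\widetilde\chi\bigl((\alpha,A)(\beta,B)\bigr)=\alpha\beta+\alpha\chi(B)+\beta\chi(A)+\chi(A)\chi(B)=\bigl(\alpha+\chi(A)\bigr)\bigl(\beta+\chi(B)\bigr)=\widetilde\chi(\alpha,A)\,\widetilde\chi(\beta,B).
\]
Since $\widetilde\chi(\mathbf1)=\widetilde\chi(1,\mathbf0)=1$, the map $\widetilde\chi$ is a character of the commutative unital Banach algebra $\widetilde{\mathoo B}$, so Proposition~\ref{pr:norm character} yields $\lVert\widetilde\chi\rVert=1$; in particular $\widetilde\chi$ is continuous, and it visibly restricts to $\chi$ on $\mathoo B$. (Alternatively, one can estimate $\lvert\widetilde\chi(\lambda\mathbf1+A)\rvert\le\lvert\lambda\rvert+\lVert\chi\rVert\,\lVert A\rVert$ directly once $\chi$ is known to be bounded.)

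Next I would treat the converse. Let $\psi$ be a character of $\widetilde{\mathoo B}$ and let $\chi$ be its restriction to the subalgebra $\{(0,A):A\in\mathoo B\}$, identified with $\mathoo B$. Because $(0,A)+(0,B)=(0,A+B)$, $\lambda(0,A)=(0,\lambda A)$ and $(0,A)(0,B)=(0,AB)$, the identities~\eqref{e:char prop} (with the unit condition dropped, as is allowed for the non-unital $\mathoo B$) carry over from $\psi$ to $\chi$, and continuity of $\chi$ follows from continuity of $\psi$; hence $\chi$ is a character of $\mathoo B$. The final assertion is then the instance $\chi=\chi_0$: the map $\lambda\mathbf1+A\mapsto\lambda$ is precisely the extension $\widetilde{\chi_0}$ constructed above, and restricting it to $\mathoo B$ sends each $A$ to $0$, i.e.\ gives the zero character.

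I do not expect a genuine obstacle, since the statement is a routine consequence of the definitions. The only points that need a little attention are the uniqueness of the representation $\lambda\mathbf1+A$ — which is where one genuinely uses that $\mathoo B$ has no unit of its own, so that the adjoined unit is a new element — and the fact that, since continuity is not built into the definition of a character here, the continuity of $\widetilde\chi$ must be deduced afterwards, either from Proposition~\ref{pr:norm character} or by the direct estimate above.
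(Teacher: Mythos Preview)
Your argument is correct. The paper states this proposition without proof, so there is nothing to compare against; your verification by direct computation with the ordered-pair description of $\widetilde{\mathoo B}$, followed by an appeal to Proposition~\ref{pr:norm character} for continuity, is exactly the routine check one would expect. One small remark: the uniqueness of the decomposition $\lambda\mathbf1+A=(\lambda,A)$ comes straight from the ordered-pair construction of $\widetilde{\mathoo B}$ and does not actually need the hypothesis that $\mathoo B$ is non-unital; that hypothesis is used only to ensure that the paper's convention $\widetilde{\mathoo B}=\mathoo B$ for unital $\mathoo B$ does not apply.
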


We denote by $X(\mathoo B)$ the set of all \emph{non-zero} characters of a commutative (unital or non-unital) algebra $\mathoo B$. The set $X(\mathoo B)$ is called~\cite{Bourbaki_Theories_Spectrales:fr} the \emph{character space} of $\mathoo B$.

\begin{proposition}\label{pr:character-1}
Let $\mathoo B$ be a commutative unital algebra, and $\chi$ be its character. If an element $A\in\mathoo B$ is invertible, then
\begin{equation*}
\chi(A^{-1})=\frac 1{\chi(A)}.
\end{equation*}
\end{proposition}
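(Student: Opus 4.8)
The plan is to feed the defining identity of the inverse into the multiplicative functional $\chi$. Since $A$ is invertible, $AA^{-1}=A^{-1}A=\mathbf1_{\mathoo B}$. Applying $\chi$ to the identity $AA^{-1}=\mathbf1_{\mathoo B}$ and using the multiplicativity of $\chi$ together with the normalization $\chi(\mathbf1_{\mathoo B})=1$ from~\eqref{e:char prop}, one gets $\chi(A)\,\chi(A^{-1})=1$ in $\mathbb C$. In particular the scalar $\chi(A)$ is nonzero, since it admits the reciprocal $\chi(A^{-1})$, so we may divide by it and conclude $\chi(A^{-1})=1/\chi(A)$, which is precisely the assertion.

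Alternatively, one can invoke Proposition~\ref{p:morphism of algebras}: a character is in particular a morphism of unital algebras $\chi:\mathoo B\to\mathbb C$, hence it carries the invertible element $A$ to an invertible element of $\mathbb C$, i.e.\ $\chi(A)\neq0$; then the same computation $\chi(A)\chi(A^{-1})=\chi(AA^{-1})=\chi(\mathbf1_{\mathoo B})=1$ yields the formula after dividing. Either way the argument is essentially a single line, so there is no genuine obstacle here; the only point worth isolating is the nonvanishing of $\chi(A)$, and this is built into the computation itself.
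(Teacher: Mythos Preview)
Your proof is correct and is exactly the argument the paper intends: the paper's own proof is the single line ``The proof follows from properties~\eqref{e:char prop},'' which is precisely your computation $\chi(A)\chi(A^{-1})=\chi(AA^{-1})=\chi(\mathbf1_{\mathoo B})=1$. The alternative via Proposition~\ref{p:morphism of algebras} is also fine and amounts to the same thing.
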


\begin{proof}
The proof follows from properties~\eqref{e:char prop}.
\end{proof}

\begin{theorem}[{\rm\cite[p. 31, Theorem 3$'$]{Gelfand-Raikov-Shilov:eng}, \cite[p. 33, Proposition 3]{Bourbaki_Theories_Spectrales:fr}, \cite[Theorem 11.9(c)]{Rudin-FA:eng}}]\label{t:Gelfand}
Let $\mathoo B$ be a commutative unital Banach algebra. An element $b\in \mathoo B$ is invertible if and only if $\chi(b)\neq 0 $ for all characters $\chi$ of the algebra $\mathoo B$.
\end{theorem}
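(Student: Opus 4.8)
The plan is to prove the two implications separately. The ``only if'' direction I would dispatch immediately: it is essentially Proposition~\ref{pr:character-1}. If $b$ is invertible and $\chi$ is any character, then $\chi(b)\cdot\chi(b^{-1})=\chi(bb^{-1})=\chi(\mathbf1_{\mathoo B})=1$ by the multiplicativity and normalization in~\eqref{e:char prop}, so $\chi(b)\neq0$.

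For the ``if'' direction I would argue by contraposition: assuming $b$ is not invertible, I will construct a character $\chi$ with $\chi(b)=0$. First, since $\mathoo B$ is commutative, the principal ideal $\mathoo J=b\mathoo B$ is a two-sided ideal, and $\mathoo J\neq\mathoo B$ because $\mathbf1_{\mathoo B}\notin\mathoo J$ (if $ba=\mathbf1_{\mathoo B}$ for some $a\in\mathoo B$, then commutativity makes $a$ a two-sided inverse of $b$). By Zorn's lemma I then pass to a maximal element $\mathoo M$ of the family of ideals distinct from $\mathoo B$ that contain $\mathoo J$. Next I would check that $\mathoo M$ is closed: the invertible elements of a unital Banach algebra form an open set containing the open ball of radius $1$ centred at $\mathbf1_{\mathoo B}$ (Neumann series), so $\mathoo M$, containing no invertible element, is disjoint from that ball; hence $\overline{\mathoo M}\neq\mathoo B$, and maximality forces $\overline{\mathoo M}=\mathoo M$.

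The final step is to form the quotient $\mathoo B/\mathoo M$, which with the quotient norm is a unital Banach algebra in which every nonzero coset is invertible (maximality of $\mathoo M$). By the Gelfand--Mazur theorem such an algebra is isometrically isomorphic to $\mathbb C$; composing that isomorphism $\iota:\,\mathoo B/\mathoo M\to\mathbb C$ with the canonical projection $\pi:\,\mathoo B\to\mathoo B/\mathoo M$ gives a morphism of unital Banach algebras $\chi=\iota\circ\pi$, i.~e. a character, and since $b\in\mathoo J\subseteq\mathoo M=\ker\pi$ we get $\chi(b)=0$. I expect the Gelfand--Mazur theorem to be the one genuinely non-trivial ingredient — the main obstacle — since its proof is where completeness of $\mathoo B$ really enters, via Liouville's theorem applied to the analytic resolvent $\lambda\mapsto(\lambda\mathbf1-A)^{-1}$ together with the non-emptiness of the spectrum (Proposition~\ref{p:inequality spectr and norm of operator}). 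Everything else — existence of the maximal ideal, its closedness, and the quotient construction — is routine, and in a write-up one could alternatively simply invoke the references listed alongside the statement.
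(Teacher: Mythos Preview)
Your argument is correct and is precisely the classical Gelfand proof found in the references the paper cites (e.g., Rudin, Theorem~11.5 combined with~11.9). The paper itself does not give a proof of this theorem; it merely states it with attributions, so there is nothing further to compare.
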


\section{Algebra $l_{1,g}(\mathbb Z^c)$}\label{s:l{1,g}}
In this section, we recall some results related to the weighted algebra $l_{1,g}(\mathbb Z^c)$. The closest detailed exposition with a discussion of motivation and history can be found in~\cite{Grochenig10}. Unfortunately, we need a slightly different formulation than in~\cite{Grochenig10}; therefore, we reproduce the main ideas with proofs in a form convenient for our aims.

We denote by $\mathbb Z^c$, $c\in\mathbb N$, the Cartesian product of $c$ copies of the group $\mathbb Z$ of integers.

\begin{definition}\label{def:weight}
A \emph{weight} on the group $\mathbb Z^c$ is an arbitrary function $g:\mathbb{Z}^c \to(0,+\infty)$. We always assume that the weight $g$ on $\mathbb Z^c$ under consideration possesses the properties:
\begin{enumerate}
	\item[{\rm(a)}] $g(0)=1$,
	\item[{\rm(b)}] $g(m+n)\leq  g(m)g(n)$,
	\item[{\rm(c)}] $g(-n)=g(n)$,
	\item[{\rm(d)}] $g(n)\geq1$,
 \item[{\rm(e0)}] for any $t\in\mathbb Z^c$, we have $\lim\limits_{\substack{n\in\mathbb Z\\n\to\infty}} \frac{\ln g(nt)}{n}=0$,
 \item[{\rm(e1)}] for any $t\in\mathbb Z^c$, we have $\lim\limits_{\substack{n\in\mathbb Z\\n\to\infty}} \sqrt[n]{g(nt)}=1$.
\end{enumerate}
\end{definition}

Clearly, (d) is a consequence of (a), (b), and (c).

\begin{proposition}\label{p:e0 <-> e1}
Assumptions {\rm(e0)} and {\rm(e1)} are equivalent.
\end{proposition}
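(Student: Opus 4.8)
The statement to prove is Proposition~\ref{p:e0 <-> e1}: assumptions (e0) and (e1) are equivalent.

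\medskip

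The plan is to observe that this is essentially a statement about the sequence $a_n = \ln g(nt)$ for a fixed $t \in \mathbb{Z}^c$, and to exploit the submultiplicativity (b) which translates into subadditivity of $a_n$. First I would fix $t \in \mathbb{Z}^c$ and set $a_n = \ln g(nt)$ for $n \in \mathbb{Z}$, $n \geq 0$. By property (d) we have $a_n \geq 0$, and by property (b) we have $a_{m+n} = \ln g((m+n)t) \leq \ln\bigl(g(mt)g(nt)\bigr) = a_m + a_n$, so $(a_n)$ is a nonnegative subadditive sequence. The equivalence of (e0) and (e1) for this $t$ is then simply the equivalence of $\lim_{n\to\infty} a_n/n = 0$ and $\lim_{n\to\infty} e^{a_n/n} = 1$.

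\medskip

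The key step is that these two limit statements are equivalent by continuity of the exponential and logarithm functions, together with the fact that $a_n/n \geq 0$: if $a_n/n \to 0$ then $\sqrt[n]{g(nt)} = e^{a_n/n} \to e^0 = 1$ by continuity of $\exp$; conversely, if $e^{a_n/n} \to 1$ then, since $a_n/n \geq 0$ implies $e^{a_n/n} \geq 1 > 0$, we may apply the logarithm (continuous and $\ln 1 = 0$) to get $a_n/n \to 0$. One can also invoke Fekete's subadditivity lemma to note that in fact the limit $\lim_{n\to\infty} a_n/n = \inf_{n\geq 1} a_n/n$ always exists in $[0,\infty)$, which makes both conditions equivalent to the assertion that this common value is zero; but strictly speaking Fekete's lemma is not needed for the equivalence itself, only the elementary continuity argument is.

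\medskip

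I do not expect any serious obstacle here; the proof is short and elementary. The only point requiring a little care is the direction (e1)$\Rightarrow$(e0): one must note that the hypothesis $\sqrt[n]{g(nt)}\to 1$ together with $g(nt)\geq 1$ (property (d)) guarantees that $\ln\sqrt[n]{g(nt)} = \frac{\ln g(nt)}{n}$ is well-defined and nonnegative, so passing to the logarithm is legitimate and yields exactly (e0). The quantifier over $t$ is harmless since the argument is carried out for each fixed $t$ and both (e0) and (e1) quantify over $t$ in the same way. Thus both implications hold, and the proposition follows.
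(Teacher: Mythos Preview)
Your proof is correct and follows essentially the same approach as the paper: the equivalence is obtained by applying the continuity of $\exp$ and $\ln$ to pass between $\frac{\ln g(nt)}{n}\to 0$ and $\sqrt[n]{g(nt)}\to 1$. The subadditivity and Fekete's lemma that you mention are not used in the paper's argument either, and as you yourself note they are unnecessary for the bare equivalence.
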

\begin{proof}
Indeed,
\begin{equation*}
\lim\limits_{\substack{n\in\mathbb Z\\n\to\infty}} \sqrt[n]{g(nt)}=1
\Leftrightarrow
\lim\limits_{\substack{n\in\mathbb Z\\n\to\infty}} \ln\sqrt[n]{g(nt)}=\ln1
\Leftrightarrow
\lim\limits_{\substack{n\in\mathbb Z\\n\to\infty}}\frac {\ln g(nt)}{n}=0.\qed
\end{equation*}
\renewcommand\qed{}
\end{proof}

\begin{example}[{\rm\cite[Example~5.21]{Grochenig10},~\cite{Grochenig-Leinert06}}]\label{ex:weights}
Let $0\le b<1$, $a\ge0$ and $s,t\ge0$. Then the functions
\begin{align*}
g(n)&=1,\\
g(n)&=(1+|n|)^s,\\
g(n)&=e^{a|n|^b}(1+|n|)^s,\\
g(n)&=e^{a|n|^b}(1+|n|)^s\ln^t(e+|n|)
\end{align*}
satisfy assumptions (a)--(e) from Definition~\ref{def:weight}. It is clear that in this list, any previous example is a special case of the next one.
\end{example}

Let $g$ be a weight on $\mathbb Z^c$, and let $\mathoo B$ be a Banach algebra. \emph{The space} $l_{1,g}=l_{1,g}(\mathbb{Z}^c,\mathoo B)$ is the set of all families $a=\{a_m \in\mathoo B:\, m\in \mathbb{Z}^c\}$ such that
\begin{equation*}
\|a\|=\|a\|_{l_{1,g}}=\sum\limits_{m \in \mathbb{Z}^c} g(m)\|a_m\|<\infty.
\end{equation*}
We endow $l_{1,g}$ with the coordinate-wise operations of addition and multiplication by scalars; clearly, $l_{1,g}$ becomes a linear space.
If $g(m)\equiv 1$, the space $l_{1,g}=l_{1,g}(\mathbb{Z}^c,\mathoo B)$ coincides with the ordinary space $l_1=l_1(\mathbb Z^c, \mathoo B)$. Clearly, assumption (d) from Definition~\ref{def:weight} implies that $l_{1,g}\subseteq l_1$ for any admissible weight $g$.

\begin{proposition}[{\rm\cite[p.~196, Lemma 5.22]{Grochenig10}}]\label{p:l_{1,g} is an algebra}
Let assumptions (a) and (b) be fulfilled.
Then the space $l_{1,g}=l_{1,g}(\mathbb Z^c,\mathoo B)$ is a Banach algebra with respect to the operation of convolution
\begin{equation*}
(a\ast b)_k=\sum_{m\in \mathbb Z}a_{m}b_{k-m},
\end{equation*}
taken as multiplication. If\/ $\mathoo B$ is unital, so is $l_{1,g}${\rm ;} the unit of the algebra $l_{1,g}$ is the family $\delta=\{\delta_k\}$ defined by the formula
\begin{equation*}
\delta_k=\begin{cases}
\mathbf1 \quad \text{when } k=0,\\
0 \quad \text{when } k\neq 0.
\end{cases}
\end{equation*}
The algebra $l_{1,g}(\mathbb Z^c,\mathbb C)$ is commutative.
\end{proposition}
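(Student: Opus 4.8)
The plan is to verify the axioms of a Banach algebra one by one; the only substantive point is submultiplicativity of the norm with respect to convolution, and everything else reduces to well-known facts plus the absolute convergence that property (b) supplies.

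First I would record that $l_{1,g}(\mathbb Z^c,\mathoo B)$ is a Banach space. Since $g$ takes values in $(0,+\infty)$, the map $\Phi:\,a\mapsto\{g(m)a_m\}_{m\in\mathbb Z^c}$ is a linear bijection of $l_{1,g}(\mathbb Z^c,\mathoo B)$ onto the ordinary space $l_1(\mathbb Z^c,\mathoo B)$, and $\lVert\Phi(a)\rVert_{l_1}=\lVert a\rVert_{l_{1,g}}$, so $\Phi$ is an isometric isomorphism. Hence completeness of $l_{1,g}$ follows from completeness of $l_1(\mathbb Z^c,\mathoo B)$, which is the familiar fact that an $l_1$-sum of copies of a Banach space is Banach: a Cauchy sequence converges coordinatewise, and a tail estimate uniform in the summation index shows that the coordinatewise limit lies in $l_1$ and is the limit in norm.

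Next I would show that for $a,b\in l_{1,g}$ the convolution $a\ast b$ is well defined and $\lVert a\ast b\rVert_{l_{1,g}}\le\lVert a\rVert_{l_{1,g}}\lVert b\rVert_{l_{1,g}}$. Using assumption (b) in the form $g(k)=g\bigl((k-m)+m\bigr)\le g(k-m)g(m)$ together with submultiplicativity of the norm on $\mathoo B$, one obtains
\[
\sum_{k}g(k)\Bigl\lVert\sum_{m}a_m b_{k-m}\Bigr\rVert\le\sum_{k}\sum_{m}\bigl(g(m)\lVert a_m\rVert\bigr)\bigl(g(k-m)\lVert b_{k-m}\rVert\bigr).
\]
All terms being non-negative, Tonelli's theorem lets us interchange the two summations, and the substitution $j=k-m$ in the inner sum yields the product $\lVert a\rVert_{l_{1,g}}\lVert b\rVert_{l_{1,g}}$. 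In particular, for each fixed $k$ the series $\sum_m a_m b_{k-m}$ is absolutely convergent in $\mathoo B$, so $a\ast b$ is defined and lies in $l_{1,g}$.

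Then I would check the algebraic identities. Bilinearity of $\ast$ is immediate from bilinearity of the multiplication of $\mathoo B$ and linearity of summation. For associativity, one writes both $(a\ast b)\ast d$ and $a\ast(b\ast d)$ as the iterated sum $\sum_{m}\sum_{n}a_m b_n d_{k-m-n}$; the double series converges absolutely (it is dominated, via (b), by $\lVert a\rVert_{l_{1,g}}\lVert b\rVert_{l_{1,g}}\lVert d\rVert_{l_{1,g}}$), so rearrangement is legitimate, and associativity of the product in $\mathoo B$ finishes the argument. Thus $l_{1,g}$ is a Banach algebra. If $\mathoo B$ is unital with unit $\mathbf1$, then $(\delta\ast a)_k=\sum_m\delta_m a_{k-m}=\mathbf1\,a_k=a_k$ and likewise $(a\ast\delta)_k=a_k$, so $\delta$ is the unit of $l_{1,g}$, and $\lVert\delta\rVert_{l_{1,g}}=g(0)\lVert\mathbf1\rVert=1$ by (a), so $l_{1,g}$ is a unital Banach algebra. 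When $\mathoo B=\mathbb C$, commutativity of convolution is the reindexing $\sum_m a_m b_{k-m}=\sum_j a_{k-j}b_j$ combined with commutativity of $\mathbb C$. The step demanding the most care is the interchange of summations in the submultiplicativity estimate and the analogous rearrangement in the proof of associativity; both are justified once the absolute convergence furnished by (b) is in place, so there is no real obstacle.
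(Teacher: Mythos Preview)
Your proof is correct and follows the standard route: completeness via the isometry $a\mapsto\{g(m)a_m\}$ onto $l_1$, submultiplicativity from assumption~(b) and Tonelli, associativity from absolute convergence, and the unit and commutativity checks are straightforward. Note that the paper does not actually supply a proof of this proposition; it merely cites \cite[p.~196, Lemma~5.22]{Grochenig10} and moves on, so there is nothing to compare against beyond saying that your argument is the expected one.
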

Let $k=1,\dots,c$.
We call a \emph{coordinate subgroup} of the group $\mathbb Z^c$ the subset $\mathbb Z^{c(k)}$ of all families $n\in\mathbb Z^c$ of the form
\begin{equation*}
n=(0,\dots,0,\underset{k}{n_k},0,\dots,0),
\end{equation*}
where $n_k\in\mathbb Z$ stands at position $k$.
Evidently, $\mathbb Z^{c(k)}$ forms a subgroup isomorphic to the group $\mathbb Z$.
We denote by $l_{1,g}(\mathbb Z^{c(k)},\mathoo B)$ the subspace of the space $l_{1,g}(\mathbb Z^c,\mathoo B)$ consisting of all families $a=\{a_n\}$ possessing the property $a_n=0$ for $n\notin\mathbb Z^{c(k)}$.

\medskip
We describe all characters of the algebra $l_{1,g}(\mathbb Z^c, \mathbb C)$.
Let $m=1,\dots,c$. We denote by $\varepsilon^{(m)} \in l_{1,g}(\mathbb Z^{c},\mathbb C)$ the family
\begin{gather*}\label{eq:posl eps:c}
\varepsilon^{(m)}_k=
\begin{cases}
1 \qquad \text{when }  k=(0,\dots,0,\underset{m}{1},0,\dots,0),\\
0 \qquad \text{when }  k\neq(0,\dots,0,\underset{m}{1},0,\dots,0),
\end{cases}
\end{gather*}
where $1$ stands at position $m$. We note that $\varepsilon^{(m)}\in l_{1,g}(\mathbb Z^{c(m)},\mathbb C)$.
By the definition, we put
\begin{equation*}
\varepsilon^{(0)}_k=
\begin{cases}
1 \qquad \text{when }  k=(0,\dots,0),\\
0 \qquad \text{when }  k\neq(0,\dots,0).
\end{cases}
\end{equation*}
Clearly, $\varepsilon^{(0)}$ is the unit element $\delta$ (Proposition~\ref{p:l_{1,g} is an algebra}) of the algebra $l_{1,g}(\mathbb Z^{c},\mathbb C)$. Obviously, the family
\begin{gather*}
\varepsilon^{-(m)}_k=
\begin{cases}
1 \qquad \text{when }  k=(0,\dots,0,\underset{m}{-1},0,\dots,0),\\
0 \qquad \text{when }  k\neq(0,\dots,0,\underset{m}{-1},0,\dots,0)
\end{cases}
\end{gather*}
is the inverse of the family $\varepsilon^{(m)}$.

For an arbitrary $n=(n_1,\dots,n_c)\in\mathbb Z^c$, we set
\begin{equation*}
\varepsilon^{n}=\prod_{m=1}^c(\varepsilon^{(m)})^{n_m}.
\end{equation*}
Clearly, the family $\varepsilon^{n}=\{\varepsilon^{n}_k:\,k\in\mathbb Z^c\}$ consists of the elements
\begin{equation}\label{e:epsilon n}
\varepsilon^{n}_k=
\begin{cases}
1 \qquad \text{when }  k=n,\\
0 \qquad \text{when }  k\neq n,
\end{cases}\qquad k\in\mathbb Z^c.
\end{equation}
In particular, $\varepsilon^0=\delta$.

\begin{proposition}\label{p:||eps||:c}
For each family $\varepsilon^{n}\in l_{1,g}(\mathbb Z^c,\mathbb C)$, $n=(n_1,\dots,n_c)\in\mathbb Z^c$, we have
\begin{equation*}
\lVert\varepsilon^{n}\rVert_{l_{1,g}}=g(n).
\end{equation*}
\end{proposition}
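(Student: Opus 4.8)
The plan is to read off the norm directly from its definition, once one knows that the family $\varepsilon^n=\prod_{m=1}^c(\varepsilon^{(m)})^{n_m}$ coincides with the elementary family described in~\eqref{e:epsilon n}, namely the family equal to $1$ at the index $n$ and to $0$ at every other index. Granting this, every term in the defining sum
\begin{equation*}
\lVert\varepsilon^n\rVert_{l_{1,g}}=\sum_{k\in\mathbb Z^c}g(k)\,\lvert\varepsilon^n_k\rvert
\end{equation*}
vanishes except the one with $k=n$, and that term equals $g(n)\cdot1=g(n)$; this is the whole of the proposition.

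So the only thing requiring an argument is the description~\eqref{e:epsilon n} of $\varepsilon^n$. First I would record the elementary convolution identity $\varepsilon^a\ast\varepsilon^b=\varepsilon^{a+b}$ for $a,b\in\mathbb Z^c$, where $\varepsilon^a$ denotes the family that is $1$ at $a$ and $0$ elsewhere: in $(\varepsilon^a\ast\varepsilon^b)_k=\sum_m\varepsilon^a_m\varepsilon^b_{k-m}$ the unique possibly-nonzero summand occurs for $m=a$, and it equals $1$ exactly when $k=a+b$. In particular $\varepsilon^{(m)}\ast\varepsilon^{-(m)}=\delta$, so $\varepsilon^{-(m)}$ inverts $\varepsilon^{(m)}$ and $(\varepsilon^{(m)})^{n_m}$ is well defined for $n_m<0$ as well, equal to the point family at $(0,\dots,0,\underset{m}{n_m},0,\dots,0)$. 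Multiplying the $c$ factors and applying the identity $c-1$ times (a trivial induction on the number of factors) yields that $\prod_{m=1}^c(\varepsilon^{(m)})^{n_m}$ is the point family at $(n_1,\dots,n_c)=n$, i.e.~\eqref{e:epsilon n}.

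There is no genuine obstacle here; the only mild care needed is the bookkeeping for negative components $n_m$, handled by the observation that $\varepsilon^{-(m)}$ inverts $\varepsilon^{(m)}$. I note that the argument uses only that $g$ takes positive values, not submultiplicativity or the growth conditions (e0)/(e1) of Definition~\ref{def:weight}; indeed, submultiplicativity by itself would give merely the one-sided bound $\lVert\varepsilon^n\rVert_{l_{1,g}}\le\prod_{m=1}^c\lVert\varepsilon^{(m)}\rVert_{l_{1,g}}^{\lvert n_m\rvert}$, whose right-hand side is in general strictly larger than $g(n)$, so the direct evaluation is essential for the exact equality.
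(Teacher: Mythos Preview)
Your proof is correct and takes essentially the same approach as the paper: the paper's proof is the single line $\lVert\varepsilon^{n}\rVert_{l_{1,g}}=\sum_{m \in \mathbb{Z}^c} g(m)\,|\varepsilon^{n}_m|=g(n)$, which is exactly your first paragraph. The identification~\eqref{e:epsilon n} that you carefully justify is stated (with a ``Clearly'') in the paper \emph{before} the proposition, so your extra argument is sound but unnecessary here.
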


\begin{proof}
Indeed,
\begin{equation*}
\lVert\varepsilon^{n}\rVert_{l_{1,g}}=\sum\limits_{m \in \mathbb{Z}^c} g(m) |\varepsilon^{n}_m|=g(n).\qed
\end{equation*}
\renewcommand\qed{}
\end{proof}

\begin{corollary}\label{c:aSeries:c}
Each family $a=\{a_n\in\mathbb C:\,n\in \mathbb{Z}^c\}\in l_{1,g}(\mathbb Z^c, \mathbb C)$ admits the representation
\begin{equation}\label{eq:aSeries:c}
	a=\sum\limits_{n\in\mathbb Z^c} a_n\varepsilon^{n},
\end{equation}
with the series absolutely convergent in the norm of $l_{1,g}$.
\end{corollary}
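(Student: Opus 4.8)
The plan is to reduce everything to the norm formula of Proposition~\ref{p:||eps||:c} together with completeness of $l_{1,g}$. First I would observe that for each $n\in\mathbb Z^c$ the family $a_n\varepsilon^{n}$ has the single nonzero coordinate $a_n$ at position $n$, so, exactly as in the computation of Proposition~\ref{p:||eps||:c} (or by homogeneity of the norm), $\lVert a_n\varepsilon^{n}\rVert_{l_{1,g}}=|a_n|\,g(n)$. Summing over $n$ and using the hypothesis $a\in l_{1,g}$,
\begin{equation*}
\sum_{n\in\mathbb Z^c}\lVert a_n\varepsilon^{n}\rVert_{l_{1,g}}=\sum_{n\in\mathbb Z^c}|a_n|\,g(n)=\lVert a\rVert_{l_{1,g}}<\infty,
\end{equation*}
so the series $\sum_{n}a_n\varepsilon^{n}$ is absolutely convergent. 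Since $l_{1,g}(\mathbb Z^c,\mathbb C)$ is a Banach space by Proposition~\ref{p:l_{1,g} is an algebra}, an absolutely convergent series converges in the norm of $l_{1,g}$, which gives the asserted convergence.

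It then remains to identify the sum of the series with $a$. For a finite set $F\subset\mathbb Z^c$, the family $a-\sum_{n\in F}a_n\varepsilon^{n}$ has coordinates equal to $a_m$ for $m\notin F$ and to $0$ for $m\in F$, hence
\begin{equation*}
\Bigl\lVert a-\sum_{n\in F}a_n\varepsilon^{n}\Bigr\rVert_{l_{1,g}}=\sum_{m\notin F}g(m)\,|a_m|.
\end{equation*}
Since the full sum $\sum_{m}g(m)\,|a_m|$ is finite, the right-hand side tends to $0$ as $F$ increases to $\mathbb Z^c$, so the net of finite partial sums converges in norm to $a$, proving~\eqref{eq:aSeries:c}.

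There is no genuine obstacle here: the statement is an immediate consequence of the explicit norm computation in Proposition~\ref{p:||eps||:c} and the completeness of $l_{1,g}$. The only point deserving a word of care is that the convergence is to be understood as convergence of the net of finite partial sums (equivalently, unconditional convergence), and that one must check the limit is $a$ itself rather than merely \emph{some} element of $l_{1,g}$; the displayed tail estimate settles both at once.
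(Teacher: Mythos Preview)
Your proof is correct and follows essentially the same approach as the paper: compute $\lVert a_n\varepsilon^n\rVert_{l_{1,g}}=|a_n|g(n)$ via Proposition~\ref{p:||eps||:c}, sum to get absolute convergence, and then verify the tail estimate to identify the limit with $a$. The only cosmetic difference is that the paper takes the exhausting finite sets to be the cubes $[-k,k]^c$ rather than arbitrary finite $F$, and it does not explicitly invoke completeness since the tail estimate already shows the partial sums converge to $a$ directly.
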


\begin{proof}
Let $a\in l_{1,g}(\mathbb Z^c,\mathbb C)$. By definition, $a$ has the form
\begin{equation*}
a=\{a_n:\, n\in \mathbb {Z}^c\}
\end{equation*}
and
\begin{equation}\label{eq:aProperty:c}
\|a\|_{l_{1,g}}=\sum\limits_{m \in \mathbb Z^c}g(m)|a_m|.
\end{equation}
We verify that series~\eqref{eq:aSeries:c} converges absolutely. Indeed, we have
\begin{equation*}
\sum\limits_{n\in \mathbb Z^c}\| a_n \varepsilon^n\|= \sum\limits_{n \in \mathbb Z^c}|a_n|\cdot \| \varepsilon^n \|= \sum\limits_{n \in \mathbb Z^c}|a_n| g(n)=\|a\|_{l_{1,g}}<\infty.
\end{equation*}
We consider the remainder of series~\eqref{eq:aSeries:c}:
\begin{equation*}
a-\sum\limits_{n\in[-k,k]^c}a_n \varepsilon^n.
\end{equation*}

Applying definition~\eqref{eq:aProperty:c}, we have
\begin{equation*}
\biggl\lVert a-\sum\limits_{n \in[-k,k]^c} a_n \varepsilon^n\biggr\rVert=\sum\limits_{n \in \mathbb Z^c \setminus [-k,k]^c}g(n)|a_n|.
\end{equation*}
Since series~\eqref{eq:aProperty:c} converges for any order of summation, its remainder (for any order of summation) tends to zero. Hence, the sum of series~\eqref{eq:aSeries:c} is the family~$a$.
\end{proof}

For $u=(u_1,\dots,u_c)\in(\mathbb C\backslash\{0\})^c$ and $n=(n_1,\dots,n_c)\in\mathbb Z^c$, we set
\begin{equation*}
u^n=u_1^{n_1}\cdot\ldots\cdot u_c^{n_c}\in\mathbb C.
\end{equation*}
Clearly,
\begin{equation*}
|u^n|=|u_1|^{n_1}\cdot\ldots\cdot |u_c|^{n_c}.
\end{equation*}

\begin{proposition}\label{pr:character l{1,g}:c}
Any character $\chi$ of the algebra $l_{1,g}(\mathbb Z^c,\mathbb C)$ admits a representation
\begin{equation}\label{eq:character l{1,g}:c}
\chi(a)=\sum\limits_{(n_1,\dots,n_c)\in\mathbb Z^c} u_1^{n_1}\cdot\ldots\cdot u_c^{n_c}\,a_{n_1,\dots,n_c}=\sum_{n\in\mathbb Z^c}u^na_n,
\end{equation}
where $u=(u_1,\dots,u_c)\in(\mathbb C\backslash\{0\})^c$, $u_m=\chi(\varepsilon^{(m)})$, and an element $a\in l_{1,g}$ is given by~\eqref{eq:aSeries:c}. In particular, $\chi(\varepsilon^n)=u^n$.
\end{proposition}
\begin{proof}
Since series~\eqref{eq:aProperty:c} converges (Proposition~\ref{c:aSeries:c}) and $\chi$ is continuous (Proposition~\ref{pr:norm character}), we have
\begin{align*}
\chi(a)&=\chi\Bigl(\sum\limits_{n\in\mathbb Z^c} a_n\varepsilon^{n}\Bigr)
=\sum\limits_{n\in\mathbb Z^c} a_n\chi(\varepsilon^{n})\\
&=\sum\limits_{n\in\mathbb Z^c} a_n\chi\Bigl(\prod_{m=1}^c(\varepsilon^{(m)})^{n_m}\Bigr)
=\sum\limits_{n\in\mathbb Z^c} a_n\prod_{m=1}^c\bigl(\chi(\varepsilon^{(m)})\bigr)^{n_m}\\
&=\sum\limits_{n\in\mathbb Z^c} a_nu^n.\qed
\end{align*}
\renewcommand\qed{}
\end{proof}

\begin{proposition}\label{pr:norm character in algebra l{1,g}:c}
Let $\chi:l_{1,g}(\mathbb Z^c,\mathbb C)\to\mathbb C$ be a character. Then
\begin{equation*}
    \|\chi\|=\sup\limits_{n\in \mathbb Z^c}\frac{|u^n|}{g(n)},
\end{equation*}
where $u=\chi(\varepsilon)$.
\end{proposition}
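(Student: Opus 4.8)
The plan is to reduce everything to two facts already in hand: the explicit series representation of a character from Proposition~\ref{pr:character l{1,g}:c}, namely $\chi(a)=\sum_{n\in\mathbb Z^c}u^na_n$ for $a=\{a_n\}\in l_{1,g}(\mathbb Z^c,\mathbb C)$ with $u_m=\chi(\varepsilon^{(m)})$, and the norm computation $\lVert\varepsilon^n\rVert_{l_{1,g}}=g(n)$ from Proposition~\ref{p:||eps||:c}. Write $S=\sup_{n\in\mathbb Z^c}\lvert u^n\rvert/g(n)$ for the quantity in the statement. Since $\chi$ is a character of a commutative unital Banach algebra, it is continuous with $\lVert\chi\rVert$ finite (Proposition~\ref{pr:norm character}), so both of the inequalities below are unambiguous; the proof then consists of proving $\lVert\chi\rVert\le S$ and $\lVert\chi\rVert\ge S$ separately.

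For the bound $\lVert\chi\rVert\le S$, I would estimate directly. For any $a\in l_{1,g}(\mathbb Z^c,\mathbb C)$, using Proposition~\ref{pr:character l{1,g}:c} (and the absolute convergence of the series recorded there and in Corollary~\ref{c:aSeries:c}),
\[
\lvert\chi(a)\rvert\le\sum_{n\in\mathbb Z^c}\lvert u^n\rvert\,\lvert a_n\rvert
=\sum_{n\in\mathbb Z^c}\frac{\lvert u^n\rvert}{g(n)}\,g(n)\lvert a_n\rvert
\le S\sum_{n\in\mathbb Z^c}g(n)\lvert a_n\rvert
=S\,\lVert a\rVert_{l_{1,g}}.
\]
Hence $\lVert\chi\rVert\le S$. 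For the reverse bound $\lVert\chi\rVert\ge S$, I would test $\chi$ on the individual basis elements. Fix $n\in\mathbb Z^c$; by Proposition~\ref{p:||eps||:c} we have $\lVert\varepsilon^n\rVert_{l_{1,g}}=g(n)$, which is strictly positive, while Proposition~\ref{pr:character l{1,g}:c} gives $\chi(\varepsilon^n)=u^n$, so
\[
\lVert\chi\rVert\ge\frac{\lvert\chi(\varepsilon^n)\rvert}{\lVert\varepsilon^n\rVert_{l_{1,g}}}=\frac{\lvert u^n\rvert}{g(n)}.
\]
Taking the supremum over $n\in\mathbb Z^c$ yields $\lVert\chi\rVert\ge S$, and combining the two estimates gives $\lVert\chi\rVert=S$.

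I do not expect a genuine obstacle here: once the series formula of Proposition~\ref{pr:character l{1,g}:c} and the norm identity of Proposition~\ref{p:||eps||:c} are available, both inequalities are a few lines. The only points worth keeping an eye on are that the rearrangement in the first display is legitimate because the relevant series converge absolutely, and that $\varepsilon^n\neq0$ (so that dividing by $\lVert\varepsilon^n\rVert_{l_{1,g}}=g(n)$ makes sense). As a sanity check, the resulting equality together with Proposition~\ref{pr:norm character} shows $\sup_{n\in\mathbb Z^c}\lvert u^n\rvert/g(n)=1$ for every character $\chi$, which constrains the admissible tuples $u=\chi(\varepsilon)$ and will be used later.
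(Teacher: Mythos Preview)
Your proof is correct and follows essentially the same approach as the paper: both split the argument into the two inequalities $\lVert\chi\rVert\le S$ and $\lVert\chi\rVert\ge S$, using Proposition~\ref{pr:character l{1,g}:c} for the upper bound and testing $\chi$ on the elements $\varepsilon^n$ via Proposition~\ref{p:||eps||:c} for the lower bound. The only cosmetic difference is the order in which the two inequalities are established.
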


\begin{proof}
We introduce the notation
\begin{equation*}
\gamma=\sup\limits_{n\in\mathbb Z^c}\frac{|u^n|}{g(n)}.
\end{equation*}
We show that $\gamma \le \|\chi \|$. To do this, we recall (Proposition~\ref{p:||eps||:c}) that $\|\varepsilon^n\|=g(n)$. At the same time, $\chi(\varepsilon^n)=u^n$. This implies $\|\chi\|\geq \sup\limits_{n\in\mathbb Z^c}\frac{|u^n|}{g(n)}$.
	
Conversely, we take an arbitrary $a\in l_{1,g}(\mathbb{Z}^c, \mathbb{C})$. By definition, we have $\|a\|=\sum\limits_{n \in \mathbb Z^c}|a_n|g(n)$. Therefore, by Proposition~\ref{pr:character l{1,g}:c},
\begin{align*}
\frac{|\chi(a)|}{\|a\|}=\frac{\Bigm|\sum\limits_{n \in \mathbb Z^c}a_n u^n\Bigm|}{\|a\|}\le\frac{\sum\limits_{n \in \mathbb Z^c}|a_n|\cdot |u^n|}{\|a\|}\le \frac{\sum\limits_{n \in \mathbb Z^c}|a_n|\gamma g(n)}{\sum\limits_{n \in \mathbb Z^c}|a_n|g(n)}=\gamma.
\end{align*}
Passing to the supremum over $a\neq 0$, we arrive at $\|\chi\|\le \gamma$.
\end{proof}

\begin{corollary}\label{cor:property for character in l{1,g}:c}
A family $u=(u_1,\dots,u_c)\in\mathbb (\mathbb C\backslash\{0\})^c$ generates a character of the algebra $l_{1,g}(\mathbb Z^c,\mathbb C)$ by formula~\eqref{eq:character l{1,g}:c} if and only if $u$ possesses the property
\begin{equation*}
|u^n|\le g(n),\qquad n\in \mathbb Z^c.
\end{equation*}
In this case, series~\eqref{eq:character l{1,g}:c} converges absolutely.
\end{corollary}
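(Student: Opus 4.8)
\emph{Plan.} The statement is an ``if and only if'', and I would handle the two directions separately; both are short once the two norm computations of the previous propositions are invoked.

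\emph{Necessity.} First I would assume that $u$ does generate a character $\chi$ of $l_{1,g}(\mathbb Z^c,\mathbb C)$ via formula~\eqref{eq:character l{1,g}:c}. By Proposition~\ref{p:l_{1,g} is an algebra}, $l_{1,g}(\mathbb Z^c,\mathbb C)$ is a commutative unital Banach algebra, and by Proposition~\ref{p:||eps||:c} its unit $\delta=\varepsilon^0$ has norm $\lVert\delta\rVert_{l_{1,g}}=g(0)=1$. Hence Proposition~\ref{pr:norm character} gives $\lVert\chi\rVert=1$, while Proposition~\ref{pr:norm character in algebra l{1,g}:c} gives $\lVert\chi\rVert=\sup_{n\in\mathbb Z^c}|u^n|/g(n)$ (note that $\chi(\varepsilon^{(m)})=u_m$ is exactly the $m$-th coordinate of $u$). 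Equating the two expressions yields $\sup_{n\in\mathbb Z^c}|u^n|/g(n)=1$, and in particular $|u^n|\le g(n)$ for every $n\in\mathbb Z^c$.

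\emph{Sufficiency and absolute convergence.} Conversely, I would start from the assumption $|u^n|\le g(n)$ for all $n\in\mathbb Z^c$. For $a=\{a_n\}\in l_{1,g}(\mathbb Z^c,\mathbb C)$,
\[
\sum_{n\in\mathbb Z^c}|u^na_n|=\sum_{n\in\mathbb Z^c}|u^n|\,|a_n|\le\sum_{n\in\mathbb Z^c}g(n)|a_n|=\lVert a\rVert_{l_{1,g}}<\infty,
\]
so the series in~\eqref{eq:character l{1,g}:c} converges absolutely (this is also the final assertion of the corollary), and it defines a linear functional $\chi$ with $|\chi(a)|\le\lVert a\rVert_{l_{1,g}}$. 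Linearity is immediate, and $\chi(\delta)=\chi(\varepsilon^0)=u^0=1$ gives the last condition in~\eqref{e:char prop} (and shows $\chi\ne0$). For multiplicativity, given $a,b\in l_{1,g}(\mathbb Z^c,\mathbb C)$ with $(a\ast b)_k=\sum_m a_mb_{k-m}$, I would use the additivity of exponents, $u^k=u^mu^{k-m}$, and rearrange:
\[
\chi(a\ast b)=\sum_{k\in\mathbb Z^c}u^k\sum_{m\in\mathbb Z^c}a_mb_{k-m}=\Bigl(\sum_{m\in\mathbb Z^c}u^ma_m\Bigr)\Bigl(\sum_{l\in\mathbb Z^c}u^lb_l\Bigr)=\chi(a)\chi(b),
\]
where the interchange of summations is legitimate because $\sum_{m,l}|u^ma_m|\,|u^lb_l|\le\lVert a\rVert_{l_{1,g}}\lVert b\rVert_{l_{1,g}}<\infty$. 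Thus $\chi$ satisfies all conditions in~\eqref{e:char prop} and is a (non-zero) character generated by $u$.

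The only step that calls for any care is the rearrangement of the double series in the multiplicativity check; it is controlled by the absolute bound just displayed, and, notably, requires no property of the weight $g$ beyond the hypothesis $|u^n|\le g(n)$ itself. Everything else reduces directly to the two preceding propositions on the norm of a character, so I do not anticipate a genuine obstacle here.
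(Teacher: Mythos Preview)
Your proof is correct and follows essentially the same route as the paper: for necessity you combine Proposition~\ref{pr:norm character} with Proposition~\ref{pr:norm character in algebra l{1,g}:c} to bound $|u^n|/g(n)$ by $1$, and for sufficiency you verify absolute convergence and check multiplicativity by rearranging the double sum, justified by the same absolute bound. The only cosmetic difference is that the paper first records $\chi(\varepsilon^n)\chi(\varepsilon^{-n})=1$ to observe $u^n\neq0$, but this is already implicit in the hypothesis $u\in(\mathbb C\setminus\{0\})^c$, so nothing is missing from your argument.
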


\begin{proof}
Let $\chi$ be a character of the algebra $l_{1,g}(\mathbb Z^c,\mathbb C)$.
We consider the elements $\varepsilon^{n}$ and $\varepsilon^{-n}$, see~\eqref{e:epsilon n}. From the identity
	\begin{equation*}
	\chi(\varepsilon^{n})\chi(\varepsilon^{-n})=\chi(\varepsilon^{n} \cdot \varepsilon^{-n})=\chi(\delta)=1,
	\end{equation*}
we obtain that $u^n=\chi(\varepsilon^{n})\neq 0$.
	
Let formula~\eqref{eq:character l{1,g}:c} define a character. By Propositions~\ref{pr:norm character} and~\ref{pr:norm character in algebra l{1,g}:c}, we have
	\begin{equation*}
	\frac{|u^n|}{g(n)}\le 1
	\end{equation*}
or $|u^n|\le g(n)$ for all $n\in\mathbb Z^c$.
	
	Conversely, let $|u^n|\le g(n)$ for all $n \in \mathbb Z^c$. We show that formula~\eqref{eq:character l{1,g}:c} defines a character.
First, we verify that series~\eqref{eq:character l{1,g}:c} converges absolutely. Indeed,
	\begin{equation*}
	\sum\limits_{n \in \mathbb Z^c}|a_n u^n|=\sum\limits_{n \in \mathbb Z^c}|a_n|\cdot |u^n|\le \sum\limits_{n \in \mathbb Z^c}|a_n| g(n)=\|a\|_{l_{1,g(\mathbb Z^c, \mathbb C)}}< \infty.
	\end{equation*}	
Next, we verify properties~\eqref{e:char prop}. Only the second property is not completely evident: 	
\begin{align*}
	\chi(a)\chi(b)&=\sum\limits_{m \in \mathbb Z^c}  a_m u^m \sum\limits_{n \in \mathbb Z^c}b_{n}u^{n}=\sum\limits_{m \in \mathbb Z^c}  a_m u^m \sum\limits_{k \in \mathbb Z^c}b_{k-m}u^{k-m}\\
	&=\sum\limits_{m \in \mathbb Z^c} \sum\limits_{k \in \mathbb Z^c} a_m u^m b_{k-m}u^{k-m}=\sum\limits_{k \in \mathbb Z^c} \biggl(\sum\limits_{m \in \mathbb Z^c} a_mb_{k-m}\biggr)u^{k-m}u^m\\
	&=\sum\limits_{k \in \mathbb Z^c} \biggl(\sum\limits_{m \in \mathbb Z^c}a_m b_{k-m} \biggr)u^k=\sum\limits_{k \in \mathbb Z^c} (a \ast b)_k u^k=\chi(a\ast b).\qed
	\end{align*}
\renewcommand\qed{}
\end{proof}

\begin{proposition}\label{p:ln g(n)/n}
	Let $g$ be a weight on $\mathbb Z$ and assumption (b) from Definition~\ref{def:weight} be fulfilled. Then the limits $\lim\limits_{n\to \pm\infty} \frac{\ln g(n)}{n}$ exist and
	\begin{align*}
		\lim\limits_{n\to + \infty} \frac{\ln g(n)}{n}&=\inf\limits_{n>0}\frac{\ln g(n)}{n},\\
		\lim\limits_{n\to - \infty} \frac{\ln g(n)}{n}&=\sup\limits_{n<0}\frac{\ln g(n)}{n}.
	\end{align*}
\end{proposition}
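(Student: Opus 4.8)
The plan is to recognize that, by assumption (b), the function $n\mapsto\ln g(n)$ on $\mathbb Z$ is \emph{subadditive}: $\ln g(m+n)\le\ln g(m)+\ln g(n)$ for all $m,n\in\mathbb Z$. Hence the claim for $n\to+\infty$ is precisely Fekete's subadditive lemma applied to the sequence $a_n=\ln g(n)$, $n\ge1$, and the claim for $n\to-\infty$ follows from the same lemma applied to $h_m=\ln g(-m)$, $m\ge1$, which is subadditive for the same reason.

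First I would prove the one-sided statement. Put $a_n=\ln g(n)$ for $n\ge1$ and $L=\inf_{n\ge1}a_n/n\in[-\infty,+\infty)$. Trivially $\liminf_{n\to\infty}a_n/n\ge L$. For the reverse inequality, fix $k\ge1$, and write a large $n$ via Euclidean division as $n=qk+r$ with $q\ge0$ and $1\le r\le k$. Iterated use of subadditivity gives $a_n\le q\,a_k+a_r$, so $a_n/n\le(qk/n)(a_k/k)+a_r/n$. As $n\to\infty$ we have $qk/n=(n-r)/n\to1$, while $a_r/n\to0$ because $a_r$ ranges over the finite set $\{a_1,\dots,a_k\}$; therefore $\limsup_{n\to\infty}a_n/n\le a_k/k$. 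Since $k$ is arbitrary, $\limsup_{n\to\infty}a_n/n\le L$, so the limit exists and equals $L=\inf_{n>0}\ln g(n)/n$.

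Then I would deduce the $n\to-\infty$ case by the substitution $n=-m$. The sequence $h_m=\ln g(-m)$, $m\ge1$, satisfies $h_{m+m'}=\ln g\bigl(-(m+m')\bigr)=\ln g\bigl((-m)+(-m')\bigr)\le h_m+h_{m'}$ by (b), so the first part applied to $h$ yields $\lim_{m\to\infty}h_m/m=\inf_{m\ge1}h_m/m$. Consequently
\[
\lim_{n\to-\infty}\frac{\ln g(n)}{n}=\lim_{m\to\infty}\frac{\ln g(-m)}{-m}=-\inf_{m\ge1}\frac{h_m}{m}=\sup_{m\ge1}\frac{\ln g(-m)}{-m}=\sup_{n<0}\frac{\ln g(n)}{n}.
\]

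There is no genuine obstacle here; the only points needing a little care are the bookkeeping in the division step (ensuring the remainder $r$ ranges over a finite index set, so that $a_r/n\to0$, and that the $\limsup$ argument covers the case $L=-\infty$ uniformly without a separate discussion), and the sign flips in reducing the negative-index limit to the positive one.
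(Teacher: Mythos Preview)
Your proposal is correct and follows essentially the same approach as the paper: both arguments exploit the subadditivity of $n\mapsto\ln g(n)$ and use Euclidean division to bound $\ln g(n)/n$ from above by $\ln g(k)/k$ plus a vanishing remainder, which is exactly Fekete's lemma. The only cosmetic differences are that the paper phrases the estimate with an $\varepsilon$ rather than with $\limsup$, and treats the case $n\to-\infty$ by repeating the argument with negative indices rather than by your substitution $n=-m$.
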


\begin{proof}
We begin with the first formula. We take an arbitrary $\varepsilon >0$ and choose a number $n_\varepsilon \in \mathbb N$ so that
	\begin{equation*}
		\frac {\ln g(n_\varepsilon)}{n_\varepsilon}\le \inf\limits_{n>0}\frac{\ln g(n)}{n}+\varepsilon.
	\end{equation*}
We take an arbitrary $k\in\mathbb N$ and represent it in the form $k=ln_\varepsilon+m$, where $m=0,1,\dots,n_\varepsilon-1$ and $l \in \mathbb Z$.
	By assumption (b), we have
	\begin{gather*}
		g(k)=g(ln_\varepsilon+m)\le \underbrace{g(n_\varepsilon)g(n_\varepsilon)\dots g(n_\varepsilon)}_{l \text{ times}} \cdot g(m)=[g(n_\varepsilon)]^l g(m).
	\end{gather*}
	Hence,
	\begin{equation*}
		\ln g(k)\le l \ln g(n_\varepsilon)+\ln g(m).
	\end{equation*}
From this inequality, it follows
	\begin{equation*}
		\frac{\ln g(k)}{k}=\frac{\ln g(k)}{ln_\varepsilon+m}\le \frac{l}{ln_\varepsilon+m}\ln g(n_\varepsilon)+\frac{\ln g(m)}{ln_\varepsilon+m}\underset{l\to+\infty}{\to} \frac{\ln g(n_\varepsilon)}{n_\varepsilon}.
	\end{equation*}
Therefore, for $k$ large enough, we have
	\begin{equation*}
		\frac{\ln g(k)}{k}\le\frac{\ln g(n_\varepsilon)}{n_\varepsilon}+\varepsilon< \underset{n>0}{\inf} \frac{\ln g(n)}{n}+2\varepsilon.
	\end{equation*}
	At the same time, by the definition of infimum, for all $k>0$ we have
	\begin{equation*}
		\underset{n>0}{\inf}\frac{\ln g(n)}{n}\le\frac{\ln g(k)}{k}.
	\end{equation*}
Consequently,
	\begin{equation*}
		\lim\limits_{k\to +\infty} \frac{\ln g(k)}{k}=\underset{n>0}{\inf} \frac{\ln g(n)}{n}.
	\end{equation*}
	
Now we prove the second formula. We take an arbitrary $\varepsilon >0$ and choose a number $n_\varepsilon \in - \mathbb N$ so that
	\begin{equation*}
		\frac {\ln g(n_\varepsilon)}{n_\varepsilon}\geq \sup\limits_{n<0}\frac{\ln g(n)}{n}-\varepsilon.
	\end{equation*}
We take an arbitrary $k \in - \mathbb N$ and represent it in the form $k=ln_\varepsilon+m$, where $m=0,-1,\dots,-n_\varepsilon+1$, $l\in\mathbb Z$. Arguing as above, we obtain
	\begin{equation*}
		\frac{\ln g(k)}{k}=\frac{\ln g(k)}{ln_\varepsilon+m}\geq \frac{l}{ln_\varepsilon+m}\ln g(n_\varepsilon)+\frac{\ln g(m)}{ln_\varepsilon+m}\underset{l \to +\infty}{\to} \frac{\ln g(n_\varepsilon)}{n_\varepsilon}.	
	\end{equation*}
Therefore, for negative $k$ large enough in absolute value, we have
	\begin{equation*}
		\frac{\ln g(k)}{k}\geq\frac{\ln g(n_\varepsilon)}{n_\varepsilon}-\varepsilon> \underset{n<0}{\sup} \frac{\ln g(n)}{n}-2\varepsilon.
	\end{equation*}
	At the same time,
	\begin{equation*}
		\underset{n<0}{\sup}\frac{\ln g(n)}{n}\geq\frac{\ln g(k)}{k}.
	\end{equation*}
	Hence,
	\begin{equation*}
		\lim\limits_{k\to -\infty} \frac{\ln g(k)}{k}=\underset{n<0}{\sup} \frac{\ln g(n)}{n}.\qed
	\end{equation*}
\renewcommand\qed{}
\end{proof}

We consider the multiplicative group
\begin{equation*}
\mathbb U=\{u\in\mathbb C:|u|=1\}.
\end{equation*}
We denote by $\mathbb U^c$ the corresponding Cartesian product. Evidently, $\mathbb U^c$ is a commuta\-ti\-ve group with respect to the componentwise multiplication.

\begin{theorem}[{\rm\cite[Theorem 5.24]{Grochenig10}}]\label{th: character in l1, l_{1,g}}
Let assumptions {\rm(a)--(e)} from Definition~\ref{def:weight} be fulfilled. Then formula~\eqref{eq:character l{1,g}:c} defines a character of the algebra $l_{1,g}(\mathbb Z^c,\mathbb C)$ if and only if $u\in\mathbb U^c$. In particular, the character space of the algebra $l_{1,g}(\mathbb Z^c,\mathbb C)$ does not depend on the weight $g$ {\rm(}provided $g$ satisfies assumptions {\rm(a)--(e)}{\rm)}.
\end{theorem}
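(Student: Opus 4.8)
The plan is to reduce everything to Corollary~\ref{cor:property for character in l{1,g}:c} and then to exploit the subexponential growth condition (e0)/(e1). By that corollary, a family $u=(u_1,\dots,u_c)\in(\mathbb C\setminus\{0\})^c$ generates a character of $l_{1,g}(\mathbb Z^c,\mathbb C)$ through formula~\eqref{eq:character l{1,g}:c} if and only if $|u^n|\le g(n)$ for all $n\in\mathbb Z^c$. Hence it suffices to prove that, under assumptions (a)--(e), this family of inequalities holds precisely when $u\in\mathbb U^c$, i.e. $|u_m|=1$ for every $m=1,\dots,c$.

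The easy direction is immediate: if $|u_m|=1$ for all $m$, then $|u^n|=|u_1|^{n_1}\cdots|u_c|^{n_c}=1\le g(n)$ by assumption (d), so by the corollary $u$ yields a character.

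For the converse I would fix an index $m$, set $t=(0,\dots,0,1,0,\dots,0)$ with the $1$ in position $m$, and test the inequality $|u^n|\le g(n)$ along the one-parameter family $n=kt$, $k\in\mathbb Z$. Using the multiplicativity $u^{kt}=u_m^{\,k}$, for $k>0$ this reads $|u_m|^{k}\le g(kt)$, hence $|u_m|\le\sqrt[k]{g(kt)}$; letting $k\to\infty$ and invoking assumption (e1) (equivalent to (e0) by Proposition~\ref{p:e0 <-> e1}) gives $|u_m|\le1$. Applying the inequality at $-kt$ and using the symmetry (c), $|u_m|^{-k}\le g(-kt)=g(kt)$, so $|u_m|\ge g(kt)^{-1/k}\to1$ as $k\to\infty$, giving $|u_m|\ge1$. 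Therefore $|u_m|=1$ for each $m$, i.e. $u\in\mathbb U^c$. The ``in particular'' clause then follows at once, since the characterization $u\in\mathbb U^c$ makes no reference to the weight $g$.

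The only place where the hypotheses genuinely bite is the limit $\sqrt[k]{g(kt)}\to1$: this is exactly condition (e1), and it is what forces $|u_m|$ to equal $1$. For a weight of honest exponential growth the admissible set of $u$ would be strictly larger than $\mathbb U^c$, so this step cannot be avoided; everything else is routine bookkeeping with $u^{m+n}=u^m u^n$ and the elementary properties (a)--(d) of the weight.
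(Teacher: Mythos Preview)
Your proof is correct and follows essentially the same route as the paper's: both reduce to Corollary~\ref{cor:property for character in l{1,g}:c}, test the inequality $|u^n|\le g(n)$ along a coordinate axis, and use the subexponential condition on $g$ to squeeze $|u_m|$ between $1$ and $1$. The only cosmetic differences are that the paper works with logarithms and condition~(e0) rather than $k$-th roots and~(e1), and obtains the lower bound $|u_m|\ge1$ by letting $k\to-\infty$ directly instead of invoking the symmetry~(c) as you do.
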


\begin{proof}
Let $\chi$ be a character of the algebra $l_{1,g}(\mathbb{Z}^c, \mathbb{C})$. By Proposi\-tion~\ref{pr:character l{1,g}:c}, it has the form~\eqref{eq:character l{1,g}:c}. Moreover, by Corollary~\ref{cor:property for character in l{1,g}:c}, $|u^k|\le g(k)$ for all $k\in\mathbb Z^c$, or
	\begin{equation*}
	|u_1|^{k_1}\cdot|u_2|^{k_2}\cdot\ldots\cdot |u_c|^{k_c}\le g(k_1,k_2,\dots,k_c),\qquad k\in\mathbb Z^c.
	\end{equation*}
In particular,
	\begin{equation*}
	|u_1|^{k_1}\cdot|u_2|^{0}\cdot\ldots\cdot |u_c|^{0}\le g(k_1,0,\dots,0),\qquad k_1\in\mathbb Z,
	\end{equation*}
or
	\begin{equation*}
	|u_1|^{k_1}\le g(k_1,0,\dots,0),\qquad k_1\in\mathbb Z.
	\end{equation*}
The last estimate is equivalent to
\begin{equation*}
k_1 \ln|u_1| \le\ln g(k_1,0,\dots,0),
\end{equation*}
or
\begin{align}
\ln|u_1| &\le \frac{\ln g(k_1,0,\dots,0)}{k_1}&& \text{for $k_1>0$},\label{e:ineq1}\\
\ln|u_1| &\ge \frac{\ln g(k_1,0,\dots,0)}{k_1}&& \text{for $k_1<0$.}\label{e:ineq2}
\end{align}
Passing to the limit as $k_1\to+\infty$ in~\eqref{e:ineq1}, we obtain
	\begin{equation*}
		\ln |u_1| \le \lim\limits_{k_1\to + \infty} \frac{\ln g(k_1,0,\dots,0)}{k_1}=0.
	\end{equation*}
Similarly, passing to the limit as $k_1\to-\infty$ in~\eqref{e:ineq2}, we obtain
	\begin{equation*}
		\ln |u_1| \geq \lim\limits_{k_1 \to - \infty} \frac{\ln g(k_1,0,\dots,0)}{k_1}=0.
	\end{equation*}
Hence $\ln|u_1|=0$ or $|u_1|=1$.
	
In the same way, one proves that $|u_2|=\dots=|u_c|=1$.
	
Conversely, let $|u_1|=\ldots=|u_c|=1$. We show that formula~\eqref{eq:character l{1,g}:c} defines a character of the algebra $l_{1,g}(\mathbb Z^c, \mathbb C)$. We make use of Corollary~\ref{cor:property for character in l{1,g}:c} again.
The condition $|u|^k\le g(k)$ from Corollary~\ref{cor:property for character in l{1,g}:c} is fulfilled  by virtue of assumption (d) from Definition~\ref{def:weight}.
\end{proof}

\begin{corollary}[{\rm\cite[Corollary 5.27]{Grochenig10}}]\label{c:l_1,g is full:1:c}
Let assumptions {\rm(a)--(e)} from Definition~\ref{def:weight} be fulfilled. Then the subalgebra $l_{1,g}(\mathbb Z^c,\mathbb C)$ is full in the algebra $l_1(\mathbb Z^c,\mathbb C)$.	
\end{corollary}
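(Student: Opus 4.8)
The plan is to deduce this from the Gelfand criterion (Theorem~\ref{t:Gelfand}) together with the identification of the character space carried out in Theorem~\ref{th: character in l1, l_{1,g}}. First I would record the structural setup: by Proposition~\ref{p:l_{1,g} is an algebra}, $l_{1,g}(\mathbb Z^c,\mathbb C)$ is a commutative unital Banach algebra whose unit is the family $\delta$, which is also the unit of $l_1(\mathbb Z^c,\mathbb C)$; and by assumption~(d) from Definition~\ref{def:weight} we have $l_{1,g}(\mathbb Z^c,\mathbb C)\subseteq l_1(\mathbb Z^c,\mathbb C)$. Hence $l_{1,g}(\mathbb Z^c,\mathbb C)$ is a unital subalgebra of $l_1(\mathbb Z^c,\mathbb C)$, and it only remains to check inverse-closedness.

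Next I would exploit the fact that the trivial weight $g\equiv 1$ also satisfies assumptions~(a)--(e), so Theorem~\ref{th: character in l1, l_{1,g}} applies both to $l_{1,g}(\mathbb Z^c,\mathbb C)$ and to $l_1(\mathbb Z^c,\mathbb C)$: in each case the characters are exactly the functionals $\chi_u(a)=\sum_{n\in\mathbb Z^c}u^n a_n$ with $u\in\mathbb U^c$. The key observation is that $\chi_u$ is defined by the same formula on $l_1(\mathbb Z^c,\mathbb C)$ and on $l_{1,g}(\mathbb Z^c,\mathbb C)$, so every character of $l_{1,g}(\mathbb Z^c,\mathbb C)$ is the restriction to $l_{1,g}(\mathbb Z^c,\mathbb C)$ of a character of $l_1(\mathbb Z^c,\mathbb C)$.

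Now take $a\in l_{1,g}(\mathbb Z^c,\mathbb C)$ that is invertible in $l_1(\mathbb Z^c,\mathbb C)$, and let $b=a^{-1}\in l_1(\mathbb Z^c,\mathbb C)$. For an arbitrary character $\chi$ of $l_{1,g}(\mathbb Z^c,\mathbb C)$, extend it to a character $\widetilde\chi=\chi_u$ of $l_1(\mathbb Z^c,\mathbb C)$ as above; by Proposition~\ref{pr:character-1}, $\widetilde\chi(a)\widetilde\chi(b)=1$, so $\chi(a)=\widetilde\chi(a)\neq0$. Since this holds for every character $\chi$ of $l_{1,g}(\mathbb Z^c,\mathbb C)$, Theorem~\ref{t:Gelfand} gives that $a$ is invertible in $l_{1,g}(\mathbb Z^c,\mathbb C)$; denote its inverse there by $a'$. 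Then $a a'=\delta$ holds in $l_{1,g}(\mathbb Z^c,\mathbb C)$, hence also in $l_1(\mathbb Z^c,\mathbb C)$, and by uniqueness of the inverse in $l_1(\mathbb Z^c,\mathbb C)$ we get $a'=b$, i.~e. $a^{-1}=a'\in l_{1,g}(\mathbb Z^c,\mathbb C)$. This proves fullness.

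I do not expect a genuine obstacle here: the substantive work — computing the character space and showing it is independent of $g$ — is already contained in Theorem~\ref{th: character in l1, l_{1,g}}, so the corollary is a direct application of Gelfand's theorem. The only point requiring a little care is the bookkeeping that a character of the smaller algebra extends (by the same formula) to a character of the larger one, so that non-vanishing transfers in the required direction.
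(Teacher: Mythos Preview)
Your proof is correct and follows essentially the same route as the paper: use Theorem~\ref{th: character in l1, l_{1,g}} to identify the character spaces of $l_{1,g}$ and $l_1$ (both parametrized by $\mathbb U^c$ via the same formula), and then apply the Gelfand criterion (Theorem~\ref{t:Gelfand}). Your version is in fact a bit tidier than the paper's, since you apply Gelfand to $a$ (which is known to lie in $l_{1,g}$) and then invoke uniqueness of inverses, whereas the paper's phrasing applies it to $b$ before $b\in l_{1,g}$ has been established.
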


\begin{proof}
Let an element $a\in l_{1,g}(\mathbb Z^c, \mathbb C)$ have an inverse $b \in l_1(\mathbb Z^c, \mathbb C)$. Then, by Proposition~\ref{p:morphism of algebras}, $\chi(b)\neq0$ for any character $\chi$ of the algebra $l_1(\mathbb Z^c, \mathbb C)$. By Theorem~\ref{th: character in l1, l_{1,g}}, the character spaces of $l_1(\mathbb Z^c, \mathbb C)$ and $l_{1,g}(\mathbb Z^c, \mathbb C)$ coincide. Therefore, $\chi(b)\neq0$ for any character $\chi$ of the algebra $l_{1,g}(\mathbb Z^c, \mathbb C)$. Consequently, by Theorem~\ref{t:Gelfand}, $b$ is invertible in~$l_{1,g}(\mathbb Z^c, \mathbb C)$.
\end{proof}

\section{The Bochner--Phillips theorem}\label{s:Bochner-Phillips}
In this section we recall the Bochner--Phillips theorem~\cite{Bochner-Phillips} in a form convenient for our exposition.

The \emph{{\rm(}algebraic{\rm)} tensor product} $X\otimes Y$ of linear spaces $X$ and $Y$ is~\cite[ch.~3, \S~1]{Bourbaki-Algebra1-3:eng},~\cite[ch. 2, \S~7]{Helemskii06:eng} the linear space of all formal sums
	\begin{equation*}
	z=\sum\limits_{k=1}^n x_k\otimes y_k, \qquad x_k \in X,\,y_k \in Y,
	\end{equation*}
in which the following expressions are identified:
	\begin{equation}\label{e:tensor equi}
	\begin{split}
	(x_1+x_2)\otimes y=x_1 \otimes y + x_2 \otimes y,\\
	x \otimes (y_1+y_2)=x\otimes y_1 + x \otimes y_2,\\
	\alpha (x \otimes y)= (\alpha x) \otimes y = x\otimes (\alpha y).
	\end{split}
	\end{equation}
	
Let $X$ and $Y$ be Banach spaces. A non-degenerate norm $\alpha(\cdot)=\|\cdot\|_\alpha$ on $X\otimes Y$ is called~\cite{Grothendieck66,Schatten50,Defant-Floret} a \emph{cross-norm} if 	\begin{equation*}
\|x \otimes y\|_\alpha= \|x\| \cdot \|y\|
\end{equation*}
for all $x\in X$ and $y\in Y$.

We denote by $X\otimes_\alpha Y$ the space $X\otimes Y$ endowed with the cross-norm $\alpha$. If $X$ and $Y$ are infinite-dimensional, then the space $X\otimes_\alpha Y$ is not complete. We denote by $X\overline{\otimes}_\alpha Y$ the completion of $X\otimes_\alpha Y$ and call it a \emph{topological tensor product} of $X$ and $Y$. The cross-norm on $X\otimes Y$ is not unique. So, different completions of $X\otimes Y$ are possible. In this paper, we deal only with the completion with respect to the \emph{projective cross-norm}
\begin{gather*}
\lVert v\rVert_\pi=\inf \Bigl\{\sum\limits_{k=1}^n \|e_k\| \cdot \|x_k\|:\,v=\sum\limits_{k=1}^n e_k \otimes x_k \Bigr\},
\end{gather*}
where the infimum is taken over all representations $v=\sum\limits_{k=1}^n e_k \otimes x_k$.

\begin{theorem}[{\rm\cite[ch. 1]{Grothendieck66},~\cite[ch. 4, \S~6]{Schatten50},~\cite[Theorem 1.7.4 (c)]{Kurbatov99}}]\label{t:GS}
	Let $X$ be a Banach space. Let $T$ be a locally compact topological space with a positive measure. Then
	\begin{gather*}
	X\overline{\otimes}_\pi L_1(T,\mathbb{C}) \simeq L_1(T,X).
	\end{gather*}
This isomorphism is natural {\rm(}in the sense that to the element $a\otimes f$ there corres\-ponds the function $t\mapsto af(t)${\rm)} and isometric.
\end{theorem}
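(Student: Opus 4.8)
The plan is to exhibit the isomorphism as the continuous extension of the natural bilinear map and to verify directly that this map is already an isometry onto a dense subspace. Write $\mu$ for the given positive measure on $T$, and define $\Phi:\,X\otimes L_1(T,\mathbb C)\to L_1(T,X)$ on elementary tensors by $\Phi(a\otimes f)=\bigl(t\mapsto af(t)\bigr)$, extending it linearly. First I would check that $\Phi$ is well defined, i.~e. that it respects the identifications~\eqref{e:tensor equi}; this is immediate, since $(a_1+a_2)f=a_1f+a_2f$, $a(f_1+f_2)=af_1+af_2$, and $(\alpha a)f=a(\alpha f)=\alpha(af)$ pointwise.

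Next I would show that $\Phi$ does not increase the projective cross-norm. For $v=\sum_{k=1}^n a_k\otimes f_k$ one has $\Phi(v)(t)=\sum_{k=1}^n a_kf_k(t)$, and therefore
\begin{equation*}
\lVert\Phi(v)\rVert_{L_1(T,X)}=\int_T\Bigl\lVert\sum_{k=1}^n a_kf_k(t)\Bigr\rVert\,d\mu(t)\le\sum_{k=1}^n\lVert a_k\rVert\cdot\lVert f_k\rVert_{L_1}.
\end{equation*}
Passing to the infimum over all representations $v=\sum_k a_k\otimes f_k$ gives $\lVert\Phi(v)\rVert_{L_1(T,X)}\le\lVert v\rVert_\pi$, so $\Phi$ extends to a bounded operator $\overline\Phi:\,X\overline{\otimes}_\pi L_1(T,\mathbb C)\to L_1(T,X)$ with $\lVert\overline\Phi\rVert\le1$.

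The core of the argument is the reverse inequality $\lVert v\rVert_\pi\le\lVert\Phi(v)\rVert_{L_1(T,X)}$. I would first establish it for tensors of the special form $w=\sum_{j=1}^m y_j\otimes\chi_{A_j}$ with $y_j\in X$ and $A_1,\dots,A_m$ pairwise disjoint measurable sets of finite measure: on the one hand $\lVert w\rVert_\pi\le\sum_j\lVert y_j\rVert\,\mu(A_j)$ by definition of the projective norm, and on the other hand, by disjointness, $\lVert\Phi(w)(t)\rVert=\sum_j\chi_{A_j}(t)\lVert y_j\rVert$, whence $\lVert\Phi(w)\rVert_{L_1(T,X)}=\sum_j\lVert y_j\rVert\,\mu(A_j)$; combined with the previous paragraph this gives $\lVert w\rVert_\pi=\lVert\Phi(w)\rVert_{L_1(T,X)}$. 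For an arbitrary $v=\sum_{k=1}^n a_k\otimes f_k$ and $\varepsilon>0$, I would then approximate each $f_k$ in $L_1(T,\mathbb C)$ by a simple function that is constant on the members of one common finite partition $\{A_1,\dots,A_m\}$ into sets of finite measure, obtaining a tensor $v'=\sum_j y_j\otimes\chi_{A_j}$ with $\lVert v-v'\rVert_\pi<\varepsilon$. Then, using the special case and the estimate $\lVert\Phi(v-v')\rVert_{L_1(T,X)}\le\lVert v-v'\rVert_\pi<\varepsilon$,
\begin{equation*}
\lVert v\rVert_\pi\le\lVert v'\rVert_\pi+\varepsilon=\lVert\Phi(v')\rVert_{L_1(T,X)}+\varepsilon\le\lVert\Phi(v)\rVert_{L_1(T,X)}+2\varepsilon;
\end{equation*}
letting $\varepsilon\to0$ proves the claim, so $\Phi$, and hence $\overline\Phi$, is isometric.

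Finally I would note that the range of $\Phi$ contains every $X$-valued simple function with support of finite measure, and such functions are dense in $L_1(T,X)$; since an isometry has closed range, the range of $\overline\Phi$ is all of $L_1(T,X)$, so $\overline\Phi$ is the required natural isometric isomorphism. The main obstacle is the reverse norm inequality of the third paragraph: it is precisely there that the special structure of $L_1$ enters — through the additivity of the $L_1(T,X)$-norm over functions supported on disjoint sets, and the density of simple functions — and one must be slightly careful to choose a single partition $\{A_1,\dots,A_m\}$ simultaneously adapted to all the $f_k$ occurring in a given representation of $v$.
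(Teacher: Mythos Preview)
The paper does not prove this theorem; it is stated with external references (Grothendieck, Schatten, Kurbatov) and used as a black box, so there is no proof in the paper to compare against. Your argument is the standard one and is correct: the contractive direction is immediate, and the isometric direction is obtained by reducing to tensors supported on a common finite disjoint partition, where the $L_1(T,X)$-norm splits additively. The only point worth tightening is the approximation step: when you write ``obtaining a tensor $v'=\sum_j y_j\otimes\chi_{A_j}$ with $\lVert v-v'\rVert_\pi<\varepsilon$'', make explicit that you first choose simple functions $g_k$ on a common partition with $\sum_k\lVert a_k\rVert\,\lVert f_k-g_k\rVert_{L_1}<\varepsilon$, so that $v'=\sum_k a_k\otimes g_k$ can be rewritten in the disjoint form and $\lVert v-v'\rVert_\pi\le\sum_k\lVert a_k\rVert\,\lVert f_k-g_k\rVert_{L_1}<\varepsilon$ follows directly from the projective norm definition. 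With that made explicit, the proof is complete.
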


\begin{corollary}
For any Banach space $X$, the canonical isometric isomorphism
\begin{equation*}
X\overline{\otimes}_\pi l_{1,g}(\mathbb Z^c,\mathbb{C})\simeq l_{1,g}(\mathbb Z^c,X)
\end{equation*}
holds.
\end{corollary}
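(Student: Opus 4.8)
The plan is to exhibit $l_{1,g}(\mathbb Z^c,\mathbb C)$ as an $L_1$-space of exactly the type occurring in Theorem~\ref{t:GS} and then to invoke that theorem directly. First I would equip $\mathbb Z^c$ with the discrete topology, under which it is a locally compact (in fact $\sigma$-compact, being countable) topological space, and introduce on it the \emph{weighted counting measure} $\mu$ determined by $\mu(\{m\})=g(m)$ for $m\in\mathbb Z^c$. Since $g:\mathbb Z^c\to(0,+\infty)$, this $\mu$ is a positive (Radon, $\sigma$-finite) measure, so the pair $(\mathbb Z^c,\mu)$ meets the hypotheses of Theorem~\ref{t:GS}.

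Next I would record the two canonical isometric identifications
$$L_1(\mathbb Z^c,\mathbb C)=l_{1,g}(\mathbb Z^c,\mathbb C),\qquad L_1(\mathbb Z^c,X)=l_{1,g}(\mathbb Z^c,X),$$
where in both cases $L_1$ is taken with respect to $\mu$. Each is immediate from the definitions: a family $a=\{a_m\}$ is $\mu$-integrable precisely when $\int_{\mathbb Z^c}\|a_m\|\,d\mu(m)=\sum_{m\in\mathbb Z^c}g(m)\|a_m\|<\infty$, which is exactly the condition $a\in l_{1,g}(\mathbb Z^c,X)$, and the two norms then coincide; the scalar statement is the special case $X=\mathbb C$.

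Finally, applying Theorem~\ref{t:GS} with $T=(\mathbb Z^c,\mu)$ yields the natural isometric isomorphism $X\overline{\otimes}_\pi L_1(\mathbb Z^c,\mathbb C)\simeq L_1(\mathbb Z^c,X)$, which under the identifications of the previous step is precisely $X\overline{\otimes}_\pi l_{1,g}(\mathbb Z^c,\mathbb C)\simeq l_{1,g}(\mathbb Z^c,X)$; naturality and the correspondence $a\otimes f\mapsto(m\mapsto af(m))$ are inherited verbatim from Theorem~\ref{t:GS}. There is no genuine obstacle here: the only points deserving a moment's attention are that the weighted counting measure really does satisfy the hypotheses of Theorem~\ref{t:GS} and that the passage from $L_1(\mathbb Z^c,\cdot)$ to $l_{1,g}(\mathbb Z^c,\cdot)$ is an \emph{isometry}, not merely a topological isomorphism — this is what makes the resulting tensor-product identification isometric, as claimed.
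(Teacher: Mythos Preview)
Your proposal is correct and is exactly the argument the paper has in mind: the Corollary is stated without proof immediately after Theorem~\ref{t:GS}, and the intended deduction is precisely to realize $l_{1,g}(\mathbb Z^c,\cdot)$ as $L_1$ over $\mathbb Z^c$ with the weighted counting measure $\mu(\{m\})=g(m)$ and then invoke Theorem~\ref{t:GS}. You have simply made explicit the details the paper leaves to the reader.
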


According to Theorem~\ref{t:GS}, we denote by the symbol $a\otimes f$ not only the element of the tensor product, but also the corresponding function $t\mapsto af(t)$.

Let $\mathoo B$ be a Banach algebra.
We note that by virtue of the above agreement, the symbol $b=a_n\otimes\varepsilon^n$, where $\varepsilon^n$ is defined by~\eqref{e:epsilon n}, means both the vector in the tensor product and the family $b$ in $l_{1,g}(\mathbb Z^c,\mathoo B)$ consisting of the elements
\begin{equation*}
b_m=\begin{cases}
a_n,\qquad &\text{when } m=n,\\
0,\qquad &\text{when } m\neq n,
\end{cases}
\qquad m\in\mathbb{Z}^c.
\end{equation*}

\begin{proposition}[{\rm cf. Corollary~\ref{c:aSeries:c}}]\label{pr:aSeries:B}
Any element $a \in l_{1,g}(\mathbb Z^c,\mathoo B)\simeq\mathoo B\overline{\otimes}_\pi l_{1,g}(\mathbb{Z}^c,\mathbb{C})$ can be repre\-sen\-ted as the absolutely convergent series
	\begin{equation}\label{eq:aSeries:B^c}
		a=\sum\limits_{k\in\mathbb Z^c} a_k\otimes\varepsilon^k,
	\end{equation}
where $a_k\in\mathoo B$ are the elements of the family $a$ and $\varepsilon^k$ is defined by~\eqref{e:epsilon n}.
\end{proposition}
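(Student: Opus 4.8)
The plan is to repeat, \emph{mutatis mutandis}, the argument from the proof of Corollary~\ref{c:aSeries:c}, with the modulus $|a_n|$ of a complex number replaced by the norm $\lVert a_n\rVert_{\mathoo B}$ of an element of $\mathoo B$.

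First I would record the norm of a single term of the series. By the agreement preceding the proposition, $a_k\otimes\varepsilon^k$ denotes the family in $l_{1,g}(\mathbb Z^c,\mathoo B)$ that equals $a_k$ at the index $k$ and vanishes at all other indices; hence, directly from the definition of the norm of $l_{1,g}(\mathbb Z^c,\mathoo B)$,
\begin{equation*}
\lVert a_k\otimes\varepsilon^k\rVert_{l_{1,g}}=g(k)\,\lVert a_k\rVert_{\mathoo B}
\end{equation*}
(equivalently, this follows from the cross-norm identity $\lVert a_k\otimes\varepsilon^k\rVert_\pi=\lVert a_k\rVert_{\mathoo B}\cdot\lVert\varepsilon^k\rVert_{l_{1,g}}$ together with Proposition~\ref{p:||eps||:c}).

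Next I would verify that series~\eqref{eq:aSeries:B^c} converges absolutely in $l_{1,g}(\mathbb Z^c,\mathoo B)$. Indeed, summing the previous identity over $k\in\mathbb Z^c$ gives
\begin{equation*}
\sum_{k\in\mathbb Z^c}\lVert a_k\otimes\varepsilon^k\rVert_{l_{1,g}}=\sum_{k\in\mathbb Z^c}g(k)\,\lVert a_k\rVert_{\mathoo B}=\lVert a\rVert_{l_{1,g}}<\infty.
\end{equation*}
Finally, to identify the sum with $a$, I would observe that for each $N\in\mathbb N$ the partial sum $\sum_{k\in[-N,N]^c}a_k\otimes\varepsilon^k$ is the family coinciding with $a$ on $[-N,N]^c$ and vanishing outside it, so that
\begin{equation*}
\Bigl\lVert a-\sum_{k\in[-N,N]^c}a_k\otimes\varepsilon^k\Bigr\rVert_{l_{1,g}}=\sum_{k\in\mathbb Z^c\setminus[-N,N]^c}g(k)\,\lVert a_k\rVert_{\mathoo B},
\end{equation*}
which is a tail of the convergent series $\sum_{k\in\mathbb Z^c}g(k)\lVert a_k\rVert_{\mathoo B}$ and hence tends to $0$ as $N\to\infty$. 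Thus the sum of~\eqref{eq:aSeries:B^c} equals $a$.

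I do not expect a real obstacle here. The only point that deserves a word is that the symbol $a_k\otimes\varepsilon^k$ may be read either as a vector of the topological tensor product $\mathoo B\overline{\otimes}_\pi l_{1,g}(\mathbb Z^c,\mathbb C)$ or as the finitely supported family in $l_{1,g}(\mathbb Z^c,\mathoo B)$ described above, and these two readings are identified by the canonical isometric isomorphism $\mathoo B\overline{\otimes}_\pi l_{1,g}(\mathbb Z^c,\mathbb C)\simeq l_{1,g}(\mathbb Z^c,\mathoo B)$ following from Theorem~\ref{t:GS}; once this is granted, the computation is word-for-word the scalar one.
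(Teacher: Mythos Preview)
Your proof is correct and follows essentially the same approach as the paper: compute $\lVert a_k\otimes\varepsilon^k\rVert_{l_{1,g}}=g(k)\lVert a_k\rVert$ via Proposition~\ref{p:||eps||:c}, sum to get absolute convergence, and then observe that the remainder over $\mathbb Z^c\setminus[-N,N]^c$ is a tail of a convergent series. Your extra remark reconciling the tensor-product reading of $a_k\otimes\varepsilon^k$ with the finitely supported family is a nice clarification, but otherwise the argument matches the paper's proof word for word.
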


\begin{proof}
We take $a\in l_{1,g}(\mathbb Z^c,\mathoo B)$. By definition, $a$ is a family
\begin{equation*}
a=\{a_k\in\mathoo B: k\in\mathbb {Z}^c\},
\end{equation*}
with
\begin{equation}\label{eq:aProperty:B^c}
\|a\|_{l_{1,g}}=\sum\limits_{k\in \mathbb Z^c}g(k)\|a_k\|<\infty.
\end{equation}
We verify that series~\eqref{eq:aSeries:B^c} converges absolutely. Indeed, we have (Proposition~\ref{p:||eps||:c})
	\begin{equation*}
	 \sum\limits_{k\in \mathbb Z^c}\| a_k\otimes\varepsilon^k\|= \sum\limits_{k \in \mathbb Z^c}\|a_k\|\cdot\| \varepsilon^k \|= \sum\limits_{k \in \mathbb Z^c}\|a_k\| g(k)=\|a\|_{l_{1,g}}.
	\end{equation*}
We consider the remainder of series~\eqref{eq:aSeries:B^c}:
\begin{equation*}
	a-\sum\limits_{k\in[-n,n]^c}a_k\otimes\varepsilon^k.
\end{equation*}
By~\eqref{eq:aProperty:B^c}, we obtain
\begin{equation*}
 \biggl\lVert a-\sum\limits_{k \in[-n,n]^c} a_k\otimes \varepsilon^k\biggr\rVert_{l_{1,g}}=\sum\limits_{k \in \mathbb Z^c \setminus [-n,n]^c}g(k)\|a_k\|.
\end{equation*}
Since series~\eqref{eq:aProperty:B^c} converges for any order of summation, its remainder (for any order of summation) tends to zero. Hence, the sum of series~\eqref{eq:aSeries:B^c} is the family~$a$.
\end{proof}

Let $X$, $X_1$, $Y$, and $Y_1$ be linear spaces, and $A:\,X\to X_1$ and $B:\,Y\to Y_1$ be linear operators. Let $A\otimes B$ denote the linear operator that acts from $X\otimes Y$ to $X_1\otimes Y_1$ and is defined by the rule
\begin{equation*}
\bigl(A\otimes B\bigr)\Bigl(\sum\limits_{k=1}^n x_k\otimes y_k\Bigr)=
\sum\limits_{k=1}^n (Ax_k)\otimes (By_k).
\end{equation*}
It is easy to show that this definition is correct in the sense that it agrees with identifications~\eqref{e:tensor equi}.

We assume additionally that the spaces $X$, $X_1$, $Y$, and $Y_1$ are Banach, and the operators $A$ and $B$ are bounded. We endow $X\otimes Y$ and $X_1\otimes Y_1$ with the projective cross-norm. It is easy to show that
\begin{equation*}
\lVert A\otimes B\rVert_{X\otimes_\pi Y\to X_1\otimes_\pi Y_1}\le\lVert A\rVert\cdot\lVert B\rVert.
\end{equation*}
Thus, the operator $A\otimes B$ can be continuously extended to the operator
\begin{equation*}
 A\otimes B: X\overline{\otimes}_\pi Y\to X_1\overline{\otimes}_\pi Y_1,
\end{equation*}
which we denote by the same symbol $A\otimes B$.
For example, in Theorem~\ref{t:Bochner-Phillips:K} below, we have
\begin{equation*}
\mathbf1_{\mathoo B}\otimes\chi:\,
\mathoo B\overline{\otimes}_\pi\mathoo M\to\mathoo B\overline{\otimes}_\pi\mathbb C=
\mathoo B\otimes_\pi\mathbb C\simeq \mathoo B.
\end{equation*}

\begin{theorem}[{\rm\cite{Bochner-Phillips}, \cite[ch.~1, \S~1.7, Theorem 1.7.10]{Kurbatov99}}]\label{t:Bochner-Phillips:K}
Let $\mathoo B$ be a unital Banach algebra, $\mathoo M$ be a unital commutative Banach algebra, and $X=X(\mathoo M)$ be the character space of the algebra $\mathoo M$. An element $T \in\mathoo B\overline{\otimes}_\pi\mathoo M$ is left {\rm(}right{\rm)} invertible in the algebra $\mathoo B\overline{\otimes}_\pi\mathoo M$ if and only if the element
\begin{equation*}
\bigl(\mathbf1_{\mathoo B}\otimes\chi\bigr)(T)
\end{equation*}
is left {\rm(}right{\rm)} invertible  in the algebra $\mathoo B\otimes_\pi\mathbb C\simeq\mathoo B$ for all $\chi\in X(\mathoo M)$.
\end{theorem}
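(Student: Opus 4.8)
The plan is to deduce both implications from the algebraic structure of $\mathoo A:=\mathoo B\overline{\otimes}_\pi\mathoo M$, exploiting that the subalgebra $\mathoo M':=\mathbf1_{\mathoo B}\otimes\mathoo M$, which we identify with $\mathoo M$, is \emph{central} in $\mathoo A$: on elementary tensors $(\mathbf1_{\mathoo B}\otimes m)(b\otimes n)=b\otimes mn=(b\otimes n)(\mathbf1_{\mathoo B}\otimes m)$ (here commutativity of $\mathoo M$ is used), and this extends to all of $\mathoo A$ by bilinearity and continuity. The ``only if'' implication is then immediate: for each $\chi\in X(\mathoo M)$ the map $\mathbf1_{\mathoo B}\otimes\chi\colon\mathoo A\to\mathoo B\otimes_\pi\mathbb C\simeq\mathoo B$ is a continuous (indeed contractive, since $\lVert\mathbf1_{\mathoo B}\otimes\chi\rVert\le\lVert\mathbf1_{\mathoo B}\rVert\cdot\lVert\chi\rVert\le1$ by Proposition~\ref{pr:norm character}) unital morphism of algebras, multiplicativity being checked on elementary tensors and extended by continuity; hence if $ST=\mathbf1_{\mathoo A}$ then $\bigl((\mathbf1_{\mathoo B}\otimes\chi)(S)\bigr)\bigl((\mathbf1_{\mathoo B}\otimes\chi)(T)\bigr)=\mathbf1_{\mathoo B}$, and symmetrically if $TS=\mathbf1_{\mathoo A}$.

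Before treating the ``if'' implication I would record the ideal picture. Fix $\chi\in X(\mathoo M)$ and set $I_\chi:=\overline{\mathoo A\cdot\ker\chi}$, where $\ker\chi\subseteq\mathoo M\cong\mathoo M'$; since $\ker\chi$ is central, $I_\chi$ is a closed two-sided ideal. I claim $I_\chi=\ker(\mathbf1_{\mathoo B}\otimes\chi)$. The inclusion $\subseteq$ is clear, and for the reverse one uses the decomposition $\mathoo M=\mathbb C\,\mathbf1_{\mathoo M}\oplus\ker\chi$: an elementary tensor splits as $b\otimes m=\chi(m)\,b\otimes\mathbf1_{\mathoo M}+b\otimes(m-\chi(m)\mathbf1_{\mathoo M})$, so every element of $\mathoo B\otimes\mathoo M$ is the sum of $\bigl((\mathbf1_{\mathoo B}\otimes\chi)(\cdot)\bigr)\otimes\mathbf1_{\mathoo M}$ and an element of $\mathoo A\cdot\ker\chi$; passing to norm limits yields $\ker(\mathbf1_{\mathoo B}\otimes\chi)\subseteq I_\chi$. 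Since $\mathbf1_{\mathoo B}\otimes\chi$ is onto $\mathoo B$, it descends to an isomorphism of Banach algebras $\mathoo A/I_\chi\simeq\mathoo B$.

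Now to run the ``if'' part, suppose $(\mathbf1_{\mathoo B}\otimes\chi)(T)$ is left invertible in $\mathoo B$ for every $\chi\in X(\mathoo M)$ but $T$ is not left invertible in $\mathoo A$. Then $\mathoo A T$ is a proper left ideal (a left inverse would lie in it), and in a unital Banach algebra the distance from $\mathbf1$ to any proper left ideal is at least $1$ (an element of a left ideal within distance $1$ of $\mathbf1$ is invertible), so a union of a chain of proper left ideals is again proper; by Zorn's lemma there is a left ideal $\mathfrak L\supseteq\mathoo A T$ maximal among all proper left ideals, and since $\overline{\mathfrak L}$ is proper, $\mathfrak L$ is closed. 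Thus $V:=\mathoo A/\mathfrak L$ is an \emph{algebraically} simple left $\mathoo A$-module. Each element of the central subalgebra $\mathoo M'$ acts on $V$ by a bounded $\mathoo A$-module endomorphism; by Schur's lemma together with the open mapping theorem the Banach algebra of all bounded $\mathoo A$-module endomorphisms of $V$ is a division algebra, hence equals $\mathbb C\,\mathrm{id}_V$ by Gelfand--Mazur. Consequently there is a unital character $\chi\in X(\mathoo M)$ with $\mathbf1_{\mathoo B}\otimes m$ acting on $V$ as $\chi(m)\,\mathrm{id}_V$ for all $m$; taking $m\in\ker\chi$ gives $\mathoo A\cdot(\mathbf1_{\mathoo B}\otimes m)\subseteq\mathfrak L$, whence $I_\chi=\overline{\mathoo A\cdot\ker\chi}\subseteq\mathfrak L$. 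Then $\mathfrak L/I_\chi$ is a proper left ideal of $\mathoo A/I_\chi\simeq\mathoo B$ containing the image of $T$ (since $T\in\mathoo A T\subseteq\mathfrak L$), so $(\mathbf1_{\mathoo B}\otimes\chi)(T)$ is not left invertible in $\mathoo B$ --- a contradiction. The right-invertible case is identical with right ideals and right modules in place of left ones.

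The crux, and the only place where Banach-algebra theory beyond the Gelfand transform enters, is the passage from the maximal left ideal $\mathfrak L$ to the character $\chi$: one must observe that a maximal proper left ideal of a unital Banach algebra is automatically closed, so that $\mathoo A/\mathfrak L$ is algebraically irreducible, and then combine Schur's lemma with Gelfand--Mazur applied to its endomorphism algebra; it is precisely the commutativity of $\mathoo M$ that makes $\mathoo M'$ central and hence maps it into this $\mathbb C$-valued endomorphism algebra, producing the character. Everything else is routine bookkeeping with tensor products and ideals.
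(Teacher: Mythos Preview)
The paper does not give its own proof of this theorem: it is stated with citations to \cite{Bochner-Phillips} and \cite[Theorem~1.7.10]{Kurbatov99} and then used as a black box. So there is nothing in the paper to compare your argument against line by line.

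Your proof is correct. The ``only if'' direction is the trivial one, and your observation that $\mathbf1_{\mathoo B}\otimes\chi$ is a continuous unital algebra morphism is exactly what is needed. For the ``if'' direction, your route via a maximal left ideal $\mathfrak L$, the simple Banach $\mathoo A$-module $V=\mathoo A/\mathfrak L$, and Schur's lemma combined with Gelfand--Mazur to force the central copy of $\mathoo M$ to act by scalars is a clean and standard representation-theoretic argument; the identification $I_\chi=\ker(\mathbf1_{\mathoo B}\otimes\chi)$ via the splitting $m=\chi(m)\mathbf1_{\mathoo M}+(m-\chi(m)\mathbf1_{\mathoo M})$ is correctly handled, including the passage to norm limits. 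The only places one might want an extra word are: (i) that the algebra of \emph{bounded} $\mathoo A$-endomorphisms of $V$ is closed in $\mathoo B(V)$, hence genuinely a Banach algebra to which Gelfand--Mazur applies; and (ii) that $\chi$ so obtained is nonzero because $\mathbf1_{\mathoo M}$ acts as the identity on $V$ --- both of which you implicitly use and which are immediate.

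For context, the original Bochner--Phillips argument (and the version in \cite{Kurbatov99}) is organized somewhat differently, but the conceptual core --- producing a character of $\mathoo M$ from an obstruction to invertibility and exploiting that $\mathoo M$ sits centrally --- is the same. Your maximal-ideal/Schur packaging is arguably the most transparent way to present it.
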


\begin{proposition}\label{p:1 otimes chi}
Let an element $a \in l_{1,g}(\mathbb Z^c, \mathoo B)$ be represented in the form~\eqref{eq:aSeries:B^c}, and a character $\chi$ of the algebra $l_{1,g}(\mathbb Z^c, \mathbb C)$ have the form~\eqref{eq:character l{1,g}:c}. Then
\begin{equation*}
(\mathbf1_{\mathoo B}\otimes\chi)(a)=\sum\limits_{k \in \mathbb Z^c} u^k a_k.	
\end{equation*}
\end{proposition}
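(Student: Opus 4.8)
The plan is to reduce everything to the continuity of the operator $\mathbf1_{\mathoo B}\otimes\chi$ together with the absolutely convergent expansion $a=\sum_{k}a_k\otimes\varepsilon^k$ supplied by Proposition~\ref{pr:aSeries:B}. Recall that $\mathbf1_{\mathoo B}\otimes\chi$ is first defined on the algebraic tensor product $\mathoo B\otimes l_{1,g}(\mathbb Z^c,\mathbb C)$ by the rule $(\mathbf1_{\mathoo B}\otimes\chi)\bigl(\sum_j x_j\otimes f_j\bigr)=\sum_j x_j\,\chi(f_j)$, and is then extended to the completion $\mathoo B\overline{\otimes}_\pi l_{1,g}(\mathbb Z^c,\mathbb C)\simeq l_{1,g}(\mathbb Z^c,\mathoo B)$ as a bounded operator, with $\lVert\mathbf1_{\mathoo B}\otimes\chi\rVert\le\lVert\mathbf1_{\mathoo B}\rVert\cdot\lVert\chi\rVert$ and $\lVert\chi\rVert=1$ by Proposition~\ref{pr:norm character}.

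First I would check that the right-hand side makes sense in $\mathoo B$. By Corollary~\ref{cor:property for character in l{1,g}:c}, the family $u=\chi(\varepsilon)$ satisfies $|u^k|\le g(k)$ for every $k\in\mathbb Z^c$, whence $\sum_{k\in\mathbb Z^c}\lVert u^k a_k\rVert=\sum_{k\in\mathbb Z^c}|u^k|\,\lVert a_k\rVert\le\sum_{k\in\mathbb Z^c}g(k)\,\lVert a_k\rVert=\lVert a\rVert_{l_{1,g}}<\infty$, so the series $\sum_{k}u^k a_k$ converges absolutely in $\mathoo B$.

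Next, for a single elementary tensor the defining rule and Proposition~\ref{pr:character l{1,g}:c} give $(\mathbf1_{\mathoo B}\otimes\chi)(a_k\otimes\varepsilon^k)=a_k\,\chi(\varepsilon^k)=u^k a_k$; by linearity each partial sum therefore satisfies $(\mathbf1_{\mathoo B}\otimes\chi)\bigl(\sum_{k\in[-n,n]^c}a_k\otimes\varepsilon^k\bigr)=\sum_{k\in[-n,n]^c}u^k a_k$. Since $a=\sum_{k}a_k\otimes\varepsilon^k$ with convergence in $l_{1,g}(\mathbb Z^c,\mathoo B)$ (Proposition~\ref{pr:aSeries:B}) and $\mathbf1_{\mathoo B}\otimes\chi$ is continuous, letting $n\to\infty$ yields $(\mathbf1_{\mathoo B}\otimes\chi)(a)=\lim_{n\to\infty}\sum_{k\in[-n,n]^c}u^k a_k=\sum_{k\in\mathbb Z^c}u^k a_k$, which is the assertion.

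I do not expect any genuine obstacle here: the entire content is the legitimacy of interchanging the continuous operator $\mathbf1_{\mathoo B}\otimes\chi$ with the limit of the partial sums, and this is immediate once one recalls that $\mathbf1_{\mathoo B}\otimes\chi$ has been extended to the \emph{completed} tensor product $\mathoo B\overline{\otimes}_\pi l_{1,g}(\mathbb Z^c,\mathbb C)$ and is bounded there, under the identification $\mathoo B\overline{\otimes}_\pi l_{1,g}(\mathbb Z^c,\mathbb C)\simeq l_{1,g}(\mathbb Z^c,\mathoo B)$ used throughout this section.
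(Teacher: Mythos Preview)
Your argument is correct and follows essentially the same route as the paper: apply the bounded operator $\mathbf1_{\mathoo B}\otimes\chi$ term-by-term to the absolutely convergent expansion $a=\sum_k a_k\otimes\varepsilon^k$ from Proposition~\ref{pr:aSeries:B}, using $\chi(\varepsilon^k)=u^k$ and continuity to pass to the limit. Your version is simply more explicit in checking convergence of $\sum_k u^k a_k$ and in spelling out the partial-sum step, whereas the paper compresses this into a single chain of equalities invoking continuity.
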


\begin{proof}
By virtue of the definition of the tensor product of operators and by the continuity of the operators, we have
	\begin{equation*}
\bigl(\mathbf1_{\mathoo B}\otimes\chi\bigr)\Bigl(\sum\limits_{k\in\mathbb Z^c} a_k\otimes\varepsilon^k\Bigr)=\sum\limits_{k\in\mathbb Z} \mathbf1_{\mathoo B}a_k\otimes\chi(\varepsilon^k)		=\sum\limits_{k\in\mathbb Z^c} a_k\otimes u^k=\sum\limits_{k \in \mathbb Z^c} u^k a_k.\qed
	\end{equation*}
\renewcommand\qed{}
\end{proof}

\begin{theorem}\label{th:invertibility in l_{1,g}(Z,B)}
Let assumptions {\rm(a)--(e)} from Definition~\ref{def:weight} be fulfilled.
Let $\mathoo B$ be a unital Banach algebra.
An element $a \in l_{1,g}(\mathbb Z^c, \mathoo B)$ of the form~\eqref{eq:aSeries:B^c} is invertible if and only if the element
\begin{equation*}
	\sum\limits_{k \in \mathbb Z^c}u^k a_k
\end{equation*}
is invertible in the algebra $\mathoo B$ for all $u\in\mathbb{U}^c$.
\end{theorem}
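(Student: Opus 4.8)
The plan is to derive Theorem~\ref{th:invertibility in l_{1,g}(Z,B)} as a direct consequence of the Bochner--Phillips theorem (Theorem~\ref{t:Bochner-Phillips:K}), combined with the description of the character space of $l_{1,g}(\mathbb Z^c,\mathbb C)$ obtained in Theorem~\ref{th: character in l1, l_{1,g}} and the formula for $\mathbf1_{\mathoo B}\otimes\chi$ from Proposition~\ref{p:1 otimes chi}. Concretely, I apply Theorem~\ref{t:Bochner-Phillips:K} with $\mathoo M=l_{1,g}(\mathbb Z^c,\mathbb C)$: this algebra is unital and commutative by Proposition~\ref{p:l_{1,g} is an algebra}, and its character space $X(\mathoo M)$ is, by Theorem~\ref{th: character in l1, l_{1,g}}, parametrized precisely by the points $u\in\mathbb U^c$ via formula~\eqref{eq:character l{1,g}:c}. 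Using the canonical isometric isomorphism $l_{1,g}(\mathbb Z^c,\mathoo B)\simeq\mathoo B\overline{\otimes}_\pi l_{1,g}(\mathbb Z^c,\mathbb C)$ (the Corollary to Theorem~\ref{t:GS}), an element $a$ written in the form~\eqref{eq:aSeries:B^c} is invertible in $l_{1,g}(\mathbb Z^c,\mathoo B)$ if and only if $(\mathbf1_{\mathoo B}\otimes\chi_u)(a)$ is invertible in $\mathoo B$ for every $u\in\mathbb U^c$, where $\chi_u$ is the character corresponding to $u$.

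The next step is simply to identify $(\mathbf1_{\mathoo B}\otimes\chi_u)(a)$ explicitly. By Proposition~\ref{p:1 otimes chi}, we have
\begin{equation*}
(\mathbf1_{\mathoo B}\otimes\chi_u)(a)=\sum\limits_{k\in\mathbb Z^c}u^k a_k,
\end{equation*}
which is exactly the element appearing in the statement. Since Theorem~\ref{t:Bochner-Phillips:K} gives equivalence of (two-sided) invertibility — one applies it for both left and right invertibility, or notes that in a unital Banach algebra an element that is both left and right invertible is invertible — the chain of equivalences closes: $a$ is invertible $\iff$ $(\mathbf1_{\mathoo B}\otimes\chi)(a)$ is invertible for all $\chi\in X(\mathoo M)$ $\iff$ $\sum_{k\in\mathbb Z^c}u^k a_k$ is invertible in $\mathoo B$ for all $u\in\mathbb U^c$.

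I should be a little careful about one bookkeeping point: Theorem~\ref{t:Bochner-Phillips:K} is stated for the tensor product algebra $\mathoo B\overline{\otimes}_\pi\mathoo M$, so I need to invoke the isometric algebra isomorphism $l_{1,g}(\mathbb Z^c,\mathoo B)\simeq\mathoo B\overline{\otimes}_\pi l_{1,g}(\mathbb Z^c,\mathbb C)$ and check that it is indeed a morphism of algebras (not merely of Banach spaces) — that the convolution multiplication on $l_{1,g}(\mathbb Z^c,\mathoo B)$ corresponds to the natural multiplication on the tensor product, under which $a_m\otimes\varepsilon^m$ times $b_n\otimes\varepsilon^n$ equals $(a_mb_n)\otimes\varepsilon^{m+n}$; this follows from Proposition~\ref{pr:aSeries:B} together with bilinearity and continuity, reducing to the elementary computation $\varepsilon^m\ast\varepsilon^n=\varepsilon^{m+n}$. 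That verification is the only genuinely non-formal ingredient, and it is routine; once it is in place, everything else is a transcription of the quoted theorems. Thus the ``hard part'' is essentially trivial here — the real work was done in the preparatory sections — and the proof is short.
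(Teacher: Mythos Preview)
Your proof is correct and follows exactly the same approach as the paper, which simply states that the result follows from Theorem~\ref{t:Bochner-Phillips:K} and Proposition~\ref{p:1 otimes chi}. You have merely unpacked the argument more explicitly (invoking Theorem~\ref{th: character in l1, l_{1,g}} for the character space and the tensor-product isomorphism), which is entirely appropriate.
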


\begin{proof}
The proof follows from Theorem~\ref{t:Bochner-Phillips:K} and Proposition~\ref{p:1 otimes chi}.
\end{proof}

\begin{corollary} \label{cor:l1gfull}
The subalgebra $l_{1,g}(\mathbb{Z}^c,\mathoo B)$ is full in the algebra $l_1(\mathbb{Z}^c,\mathoo B)$.
\end{corollary}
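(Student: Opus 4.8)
The plan is to derive the fullness of $l_{1,g}(\mathbb Z^c,\mathoo B)$ directly from the invertibility criterion of Theorem~\ref{th:invertibility in l_{1,g}(Z,B)}, exploiting the fact that this criterion does not mention the weight $g$ at all.

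First I would record that $l_{1,g}(\mathbb Z^c,\mathoo B)$ is a unital subalgebra of $l_1(\mathbb Z^c,\mathoo B)$ (here, as in Theorem~\ref{th:invertibility in l_{1,g}(Z,B)}, $\mathoo B$ is unital, so that $l_1(\mathbb Z^c,\mathoo B)$ has a unit). Indeed, by assumption~(d) of Definition~\ref{def:weight} we have $g(n)\ge 1$, whence $\|a\|_{l_1}\le\|a\|_{l_{1,g}}$ and therefore $l_{1,g}\subseteq l_1$; the linear operations and the convolution coincide in the two spaces, and the unit $\delta$ lies in $l_{1,g}$. In particular it makes sense to ask whether $l_{1,g}$ is full in $l_1$.

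Next I would apply Theorem~\ref{th:invertibility in l_{1,g}(Z,B)} twice. The trivial weight $g_0\equiv 1$ satisfies assumptions~(a)--(e) of Definition~\ref{def:weight} (it is the first entry of Example~\ref{ex:weights}) and $l_{1,g_0}(\mathbb Z^c,\mathoo B)=l_1(\mathbb Z^c,\mathoo B)$. Hence, for an element $a=\sum_{k}a_k\otimes\varepsilon^k$ that belongs to $l_{1,g}$, Theorem~\ref{th:invertibility in l_{1,g}(Z,B)} applied with $g_0$ says that $a$ is invertible in $l_1(\mathbb Z^c,\mathoo B)$ if and only if $\sum_{k\in\mathbb Z^c}u^k a_k$ is invertible in $\mathoo B$ for every $u\in\mathbb U^c$, while the same theorem applied with the given weight $g$ says that this very condition is equivalent to the invertibility of $a$ in $l_{1,g}(\mathbb Z^c,\mathoo B)$. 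Thus $a$ is invertible in $l_1$ if and only if it is invertible in $l_{1,g}$.

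Finally, if $a\in l_{1,g}$ is invertible in $l_1$ with inverse $b\in l_1$, the previous step produces some $b'\in l_{1,g}$ with $a\ast b'=b'\ast a=\delta$; since inverses in a unital algebra are unique --- or directly, $b'=b'\ast(a\ast b)=(b'\ast a)\ast b=b$ --- we conclude $b=b'\in l_{1,g}$, which is precisely the assertion that $l_{1,g}$ is full in $l_1$. There is no genuine obstacle here: all the substance is contained in Theorem~\ref{th:invertibility in l_{1,g}(Z,B)} (and hence in the Bochner--Phillips theorem), and the only observation required is that the spectral condition appearing there is independent of the weight. I remark that one could alternatively imitate the character-theoretic proof of Corollary~\ref{c:l_1,g is full:1:c}, but that route is less convenient since $\mathoo B$ is not assumed to be commutative.
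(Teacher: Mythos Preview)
Your proof is correct and follows essentially the same approach as the paper's: both apply Theorem~\ref{th:invertibility in l_{1,g}(Z,B)} to observe that the invertibility criterion (invertibility of $\sum_{k}u^{k}a_{k}$ in $\mathoo B$ for all $u\in\mathbb U^{c}$) is independent of the weight, so invertibility in $l_{1,g}$ and in $l_{1}$ coincide. Your version is simply more explicit about the subalgebra inclusion and the uniqueness of inverses, but the underlying argument is identical.
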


\begin{proof}
By Theorem~\ref{th:invertibility in l_{1,g}(Z,B)}, the invertibility of $a \in l_{1,g}(\mathbb{Z}^c, \mathoo B)$ in $l_{1,g}(\mathbb{Z}^c,\mathoo B)$ and the invertibility in $l_{1}(\mathbb{Z}^c,\mathoo B)$ mean the same.
\end{proof}

\section{Convolution dominated operators}\label{s:convolution dominayed}

Let $X$ be a Banach space. For $1\le p<\infty$, we denote by $l_p=l_p(\mathbb Z^c,X)$ the Banach space consisting of all families $\{x_n\in X:\, n\in\mathbb Z^c\}$ such that $\sum\limits_{n\in \mathbb Z^c}\lVert x_n\rVert^p<\infty$. We endow the space $l_p=l_p(\mathbb Z^c,X)$ with the usual norm
\begin{equation*}
\lVert x\rVert=\lVert x\rVert_{l_p}=\biggl(\sum\limits_{k \in \mathbb Z^c}\lVert x_k\rVert^p\biggr )^{\frac 1p}.
\end{equation*}
We denote by $l_\infty=l_\infty(\mathbb Z^c,X)$ the Banach space consisting of all bounded families $\{x_n\in X:\,n\in\mathbb Z^c\}$ with the norm
\begin{equation*}
\lVert x\rVert=\lVert x\rVert_{l_\infty}=\sup_{n\in\mathbb Z^c}\lVert x_n\rVert.
\end{equation*}

We call a \emph{matrix} (indexed by elements of the set $\mathbb Z^c$ with values in $\mathoo B(X)$) any family $\{\,a_{kl}\in\mathoo B(X):\,k,l\in\mathbb Z^c\,\}$. We say that an \emph{operator} $T\in\mathoo B\bigl(l_p(\mathbb Z^c,X)\bigr)$, $1\le p\le\infty$, \emph{is generated by a matrix} $\{\,a_{kl}\in\mathoo B(X):\,k,l\in\mathbb Z^c\,\}$ if
\begin{equation*}
(Tx)_k=\sum\limits_{m\in\mathbb Z^c} a_{kl}x_{l},\qquad k\in\mathbb Z^c,
\end{equation*}
for all $x\in l_p$,
where the series converges in norm. Obviously, not every matrix generates a (bounded) operator.
It is less obvious, that not every (bounded) operator is generated by some matrix,
see a counterexample in~\cite[Example 1.6.4]{Kurbatov99}. For our aims, it is convenient to change the enumeration of matrix elements; namely, we make the substitution $l=k-m$ and introduce the notation $b_{km}=a_{k,k-m}$:
\begin{equation}\label{e:operator D}
(Tx)_k=\sum\limits_{m\in\mathbb Z^c}b_{km}x_{k-m},\qquad k\in\mathbb Z^c.
\end{equation}

\begin{definition}\label{def:s_{1,g}}
Let $g$ be a weight on $\mathbb Z^c$ satisfying assumptions (a)--(e) from Definition~\ref{def:weight}.
We denote by $\mathoo s_{1,g}=\mathoo s_{1,g}\bigl(\mathbb Z^c,\mathoo B(X)\bigr)$ the set of all operators $T\in\mathoo B\bigl(l_p(\mathbb Z^c,X)\bigr)$, $1\le p\le\infty$, of the form~\eqref{e:operator D} such that the coefficients $b_{km}\in\mathoo B(X)$
satisfy the estimate
\begin{equation}\label{e:beta-est}
\lVert b_{km}\rVert_{\mathoo B(X)}\leq\beta_m
\end{equation}
for some $\beta\in l_{1,g}(\mathbb Z^c,\mathbb C)$. (Implicitly, we assume that $\beta_m\ge0$. Formally, it is explained as follows: two complex numbers can be compared only if they are real. Since $\lVert b_{km}\rVert\ge0$, so is $\beta_m$ by~\eqref{e:beta-est}.)
In other words, $\mathoo s_{1,g}=\mathoo s_{1,g}\bigl(\mathbb Z^c,\mathoo B(X)\bigr)$
consists of operators generated by matrices satisfying estimate~\eqref{e:beta-est} with $\beta\in l_{1,g}(\mathbb Z^c,\mathbb C)$. In the case $g(n)\equiv1$, $n\in\mathbb Z^c$, we use the simple symbol $\mathoo s_1$ instead of $\mathoo s_{1,g}$.
\end{definition}

\begin{proposition}\label{l:ClassSg}
Let $1\le p\le\infty$. Then series~\eqref{e:operator D} converges absolutely for all $x\in l_p(\mathbb Z^c,X)$ and defines a linear operator $T\in\mathoo B\bigl(l_p(\mathbb Z^c,X)\bigr)$. Furthermore,
	\begin{gather*}
\lVert T\rVert_{\mathoo B(l_p)} \leq \sum\limits_{m \in \mathbb Z^c}\beta_m=\lVert\beta\rVert_{l_1}\le\lVert\beta\rVert_{l_{1,g}}.
	\end{gather*}
\end{proposition}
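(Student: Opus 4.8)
The statement is a routine but essential estimate. The plan is to establish three things in order: first, that the inner series defining $(Tx)_k$ in~\eqref{e:operator D} converges absolutely in $X$ for each fixed $k$; second, that the resulting family $Tx=\{(Tx)_k\}$ lies in $l_p$; and third, the norm bound $\lVert T\rVert_{\mathoo B(l_p)}\le\lVert\beta\rVert_{l_1}$. The final inequality $\lVert\beta\rVert_{l_1}\le\lVert\beta\rVert_{l_{1,g}}$ is immediate from assumption (d) of Definition~\ref{def:weight}, since $g(n)\ge 1$ forces $\sum_m\beta_m\le\sum_m g(m)\beta_m$.

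For absolute convergence in $X$: fix $k\in\mathbb Z^c$ and $x\in l_p$. Using~\eqref{e:beta-est}, we get $\sum_{m}\lVert b_{km}x_{k-m}\rVert\le\sum_m\beta_m\lVert x_{k-m}\rVert$, and the right-hand side is finite because it is (a shift of) the convolution of $\beta\in l_1$ with the sequence $(\lVert x_n\rVert)_n$, which lies in $l_p\subseteq l_\infty$ when $1\le p<\infty$ (and in $l_\infty$ when $p=\infty$); in either case $\sum_m\beta_m\lVert x_{k-m}\rVert\le\lVert\beta\rVert_{l_1}\cdot\lVert x\rVert_{l_\infty}<\infty$. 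So $(Tx)_k$ is a well-defined element of $X$ with $\lVert(Tx)_k\rVert\le\sum_m\beta_m\lVert x_{k-m}\rVert$.

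For the norm bound, the main work is the case $1\le p<\infty$; the case $p=\infty$ follows the same lines but more simply. I would write $\lVert(Tx)_k\rVert\le\sum_m\beta_m\lVert x_{k-m}\rVert=\sum_m\beta_m^{1/p'}\bigl(\beta_m^{1/p}\lVert x_{k-m}\rVert\bigr)$ and apply H\"older's inequality with exponents $p'=p/(p-1)$ and $p$ (for $p=1$ the splitting is trivial and one argues directly), obtaining
\begin{equation*}
\lVert(Tx)_k\rVert^p\le\Bigl(\sum_m\beta_m\Bigr)^{p/p'}\sum_m\beta_m\lVert x_{k-m}\rVert^p=\lVert\beta\rVert_{l_1}^{\,p-1}\sum_m\beta_m\lVert x_{k-m}\rVert^p.
\end{equation*}
Summing over $k$, interchanging the order of summation (legitimate since all terms are nonnegative), and using $\sum_k\lVert x_{k-m}\rVert^p=\lVert x\rVert_{l_p}^p$ independently of $m$, I get $\sum_k\lVert(Tx)_k\rVert^p\le\lVert\beta\rVert_{l_1}^{\,p-1}\cdot\lVert\beta\rVert_{l_1}\cdot\lVert x\rVert_{l_p}^p=\lVert\beta\rVert_{l_1}^p\lVert x\rVert_{l_p}^p$, hence $\lVert Tx\rVert_{l_p}\le\lVert\beta\rVert_{l_1}\lVert x\rVert_{l_p}$. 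Linearity of $T$ is clear from the defining formula. I do not anticipate a genuine obstacle here; the only point requiring a little care is the uniform $l_\infty$-bound needed to justify the interchange of summations and to cover $p=\infty$, but this is handled by the embedding $l_p\subseteq l_\infty$ together with Tonelli's theorem for the nonnegative double series.
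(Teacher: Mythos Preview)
Your proposal is correct and follows essentially the same route as the paper: both bound $\lVert(Tx)_k\rVert$ by $(\beta*z)_k$ with $z_k=\lVert x_k\rVert$ and then apply Young's inequality $\lVert\beta*z\rVert_{l_p}\le\lVert\beta\rVert_{l_1}\lVert z\rVert_{l_p}$. The only cosmetic difference is that the paper quotes this convolution estimate as a known fact, whereas you unpack its proof via the H\"older splitting $\beta_m=\beta_m^{1/p'}\beta_m^{1/p}$.
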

\begin{proof}
For $p<\infty$, we have
	\begin{align*}
		\lVert Tx\rVert_{l_p} &=\sqrt[p]{\sum\limits_{k\in \mathbb Z^c}\Bigl\lVert \sum\limits_{m \in \mathbb Z^c} b_{km}x_{k-m} \Bigr\rVert_{X}^p}\\
&\le \sqrt[p]{\sum\limits_{k\in \mathbb Z^c}\Bigl(\sum\limits_{m \in \mathbb Z^c} \lVert b_{km} x_{k-m} \rVert_X\Bigr)^p}\\
		&\le \sqrt[p]{\sum\limits_{k\in \mathbb Z^c}\Bigl(\sum\limits_{m \in \mathbb Z^c} \lVert b_{km}\rVert_X \cdot \lVert x_{k-m} \rVert_{X}\Bigr)^p}\\
&\le \sqrt[p]{\sum\limits_{k\in \mathbb Z^c}\Bigl(\sum\limits_{m \in \mathbb Z^c} \beta_{m} \cdot \lVert x_{k-m} \rVert_{X}\Bigr)^p}\\
		&=\sqrt[p]{\sum\limits_{k\in \mathbb Z^c}\Bigl( (\beta \ast z)_k \Bigr)^p}\\
&=\lVert \beta*z \rVert_{l_p}\le \lVert \beta \rVert_{l_1} \cdot \lVert z\rVert_{l_p}
=\lVert \beta \rVert_{l_1} \cdot \lVert x\rVert_{l_p},
	\end{align*}
where $z$ is the numerical sequence $z_k=\lVert x_k\rVert$.

For $p=\infty$, we have
	\begin{align*}
		\lVert Tx \rVert_{l_\infty}&=\sup_{k\in\mathbb Z^c}\Bigl\lVert \sum\limits_{m \in \mathbb Z^c} b_{km}x_{k-m} \Bigr\rVert_{X}\\
&\le\sup_{k\in\mathbb Z^c}\sum\limits_{m \in \mathbb Z^c} \lVert b_{km} x_{k-m} \rVert_X \\
&\le \sup_{k\in\mathbb Z^c}\sum\limits_{m \in \mathbb Z^c} \lVert b_{km}\rVert_X \cdot \lVert x_{k-m} \rVert_{X}\\
&\le \sup_{k\in\mathbb Z^c}\sum\limits_{m \in \mathbb Z^c} \beta_{m} \cdot \lVert x_{k-m} \rVert_X\\
&=\sup_{k\in\mathbb Z^c}(\beta \ast z)_k \\
&=\lVert \beta \ast z \rVert_{l_\infty}\le \lVert \beta \rVert_{l_1} \cdot \lVert z\rVert_{l_\infty}=\lVert \beta \rVert_{l_1} \cdot \lVert x\rVert_{l_\infty},
	\end{align*}
where $z$ is the same.

From these estimates, it follows that $Tx\in l_p$ for all $x\in l_p$ and
$\lVert T\rVert_{l_p\to l_p}\le\lVert \beta \rVert_{l_1}$.

The linearity of the operator $T$ is evident.
\end{proof}

We describe one more representation of operators of the class $\mathoo s_{1,g}$.

For every $m\in\mathbb Z^c$, we consider the \emph{shift operator}
\begin{equation*}
(S_mx)_k=x_{k-m},\qquad k\in\mathbb Z^c.
\end{equation*}
Evidently, $S_m$ acts from $l_p(\mathbb Z^c,X)$ to itself, $1\le p\le\infty$, and has unit norm.

Let $a\in l_\infty\bigl(\mathbb Z^c,\mathoo B(X)\bigr)$, i.~e. $a=\{a_k\in\mathoo B(X):\,k\in\mathbb Z^c\}$ and
\begin{equation*}
\lVert a\rVert=\lVert a\rVert_{l_\infty}=\sup_{k\in\mathbb Z^c}\lVert a_k\rVert_{\mathoo B(X)}<\infty.
\end{equation*}
We associate with the family $a$ the \emph{multiplication operator}
\begin{equation*}
(Ax)_k=a_kx_k,\qquad k\in\mathbb Z^c,
\end{equation*}
that acts from $l_p(\mathbb Z^c,X)$ to itself. Evidently, for all $1\le p\le\infty$,
\begin{equation*}
\lVert A\rVert_{l_p\to l_p}=\lVert a\rVert_{l_\infty}.
\end{equation*}

\begin{proposition}\label{p:s_{1,g}:2}
Let an operator $T\in\mathoo s_{1,g}\bigl(\mathbb Z^c,\mathoo B(X)\bigr)$ have the form~\eqref{e:operator D}. Then the operator $T$ can be represented in the form
\begin{equation}\label{e:T=sum:2}
T=\sum_{m\in\mathbb Z^c}B_mS_m,
\end{equation}
where $B_m$ are the multiplication operators
\begin{equation*}
(B_mx)_k=b_{km}x_k,\qquad k\in\mathbb Z^c.
\end{equation*}
\end{proposition}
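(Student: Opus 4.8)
The plan is to establish the representation~\eqref{e:T=sum:2} by showing that the series on its right-hand side converges absolutely in the operator norm of $\mathoo B\bigl(l_p(\mathbb Z^c,X)\bigr)$ and that its sum agrees with $T$ entry by entry.

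First I would record the relevant norm bounds. The operator $B_m$ is the multiplication operator associated with the family $\{b_{km}:\,k\in\mathbb Z^c\}$, so, as noted above for multiplication operators, $\lVert B_m\rVert_{l_p\to l_p}=\sup_{k\in\mathbb Z^c}\lVert b_{km}\rVert_{\mathoo B(X)}$, and by~\eqref{e:beta-est} this does not exceed $\beta_m$; also $\lVert S_m\rVert_{l_p\to l_p}=1$. Hence $\lVert B_mS_m\rVert_{l_p\to l_p}\le\beta_m$ and
\[
\sum_{m\in\mathbb Z^c}\lVert B_mS_m\rVert_{l_p\to l_p}\le\sum_{m\in\mathbb Z^c}\beta_m=\lVert\beta\rVert_{l_1}\le\lVert\beta\rVert_{l_{1,g}}<\infty .
\]
Since $\mathoo B\bigl(l_p(\mathbb Z^c,X)\bigr)$ is a Banach space, the series $\sum_{m\in\mathbb Z^c}B_mS_m$ converges in the operator norm (unconditionally, as it converges absolutely) to some operator $T'\in\mathoo B\bigl(l_p(\mathbb Z^c,X)\bigr)$.

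Next I would verify that $T'=T$. Fix $x\in l_p(\mathbb Z^c,X)$. Since the operator series converges in norm, $\sum_{m\in\mathbb Z^c}B_mS_mx=T'x$ in $l_p(\mathbb Z^c,X)$. For each $k\in\mathbb Z^c$ the coordinate projection $y\mapsto y_k$ is a bounded linear map from $l_p(\mathbb Z^c,X)$ to $X$ (of norm $\le1$), so it may be applied to the series term by term, giving $(T'x)_k=\sum_{m\in\mathbb Z^c}(B_mS_mx)_k$. A direct computation yields $(B_mS_mx)_k=b_{km}(S_mx)_k=b_{km}x_{k-m}$, and therefore $(T'x)_k=\sum_{m\in\mathbb Z^c}b_{km}x_{k-m}=(Tx)_k$ by~\eqref{e:operator D}. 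As $k\in\mathbb Z^c$ and $x\in l_p$ are arbitrary, $T'=T$, which is the asserted representation.

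There is no genuine obstacle here; the only step deserving a word of justification is the termwise evaluation of the operator series at a fixed coordinate $k$, and this is immediate from the norm convergence of $\sum_{m}B_mS_m$ together with the continuity of the coordinate functionals. (The absolute convergence of the series~\eqref{e:operator D} defining $Tx$ itself was already established in Proposition~\ref{l:ClassSg}.)
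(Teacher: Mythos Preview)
Your proof is correct and follows essentially the same approach as the paper's: bound $\lVert B_mS_m\rVert_{l_p\to l_p}\le\beta_m$ via the submultiplicativity of the norm and the identities $\lVert S_m\rVert=1$, $\lVert B_m\rVert=\sup_k\lVert b_{km}\rVert$, deduce absolute convergence of the operator series, and then check coordinatewise that its sum coincides with $T$. Your version is slightly more explicit in justifying the interchange of the coordinate evaluation with the infinite sum via the continuity of the projection $y\mapsto y_k$, but otherwise the arguments are identical.
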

\begin{proof}
Evidently,
\begin{align*}
\lVert B_mS_m\rVert_{l_p\to l_p}&\le\lVert B_m\rVert_{l_p\to l_p}\cdot\lVert S_m\rVert_{l_p\to l_p}=\lVert B_m\rVert_{l_p\to l_p}
=\lVert b_m\rVert_{l_\infty}\\
&=\sup_{k\in\mathbb Z^c}\lVert b_{km}\rVert_{\mathoo B(X)}\le\beta_m,
\end{align*}
where $b_m=\{\,b_{km}:\,k\in\mathbb Z^c\,\}$. Therefore series~\eqref{e:T=sum:2} converges absolutely.

For any $x\in L_p$, we have
\begin{align*}
\biggl(\Bigl(\sum_{m\in\mathbb Z^c}B_mS_m\Bigr)x\biggr)_k&=\Bigl(\sum_{m\in\mathbb Z^c}B_mS_mx\Bigr)_k\\
&=\sum_{m\in\mathbb Z^c}(B_mS_mx)_k\\
&=\sum_{m\in\mathbb Z^c}b_{km}x_{k-m}\\
&=(Tx)_k.\qed
\end{align*}
\renewcommand\qed{}
\end{proof}

\begin{theorem}[{\rm\cite{KurbatovFA90:eng}, \cite[Theorem 2.2.7]{Kurbatov90:eng}, \cite[ch.~5, \S~5.2, Theorem 5.2.6]{Kurbatov99}}]\label{t:5.2.6}
The subalgebra $\mathoo s_{1}\bigl(\mathbb Z^c,\,\mathoo B(X)\bigr)$ is full in the algebra $\mathoo B\bigl(l_p(\mathbb Z^c,X)\bigr)$ for all $1\le p\le\infty$.
\end{theorem}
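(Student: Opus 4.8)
The plan is to embed $\mathoo s_1\bigl(\mathbb Z^c,\mathoo B(X)\bigr)$ as a subalgebra of $l_1\bigl(\mathbb Z^c,\mathoo B(l_p)\bigr)$, to identify the latter with $\mathoo B(l_p)\overline{\otimes}_\pi l_1(\mathbb Z^c,\mathbb C)$, and then to combine the Bochner--Phillips theorem (Theorem~\ref{t:Bochner-Phillips:K}) with the elementary fact that the fixed-point subalgebra of a group of algebra automorphisms is always inverse-closed. Throughout, $l_p=l_p(\mathbb Z^c,X)$.

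First I would construct the embedding. By Proposition~\ref{p:s_{1,g}:2} every $T\in\mathoo s_1$ of the form~\eqref{e:operator D} can be written as $T=\sum_{m\in\mathbb Z^c}B_mS_m$, where $S_m$ is the shift and $B_m$ is the multiplication operator with family $\{b_{km}\}_{k}$; since $\lVert B_mS_m\rVert_{\mathoo B(l_p)}=\lVert B_m\rVert_{\mathoo B(l_p)}=\sup_k\lVert b_{km}\rVert\le\beta_m$, the family $\Phi(T):=\{B_mS_m\}_{m}$ lies in $l_1\bigl(\mathbb Z^c,\mathoo B(l_p)\bigr)$. Using that $S_m$ is invertible and that $S_mB'S_{-m}$ is the multiplication operator obtained from $B'$ by shifting its family, one checks that composition in $\mathoo s_1$ corresponds, after the shifts are pulled out, to ordinary convolution: $\Phi(TT')=\Phi(T)\ast\Phi(T')$. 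Hence $\mathoo s_1$ is a unital subalgebra of $\mathoo B(l_p)$ and $\Phi$ is an injective morphism of unital Banach algebras into $l_1\bigl(\mathbb Z^c,\mathoo B(l_p)\bigr)\simeq\mathoo B(l_p)\overline{\otimes}_\pi l_1(\mathbb Z^c,\mathbb C)$ (the last identification being the corollary to Theorem~\ref{t:GS} with $g\equiv1$), an algebra whose character space is $\mathbb U^c$ by Theorem~\ref{th: character in l1, l_{1,g}}. For $u\in\mathbb U^c$ let $U_u$ be the modulation operator $(U_ux)_k=u^kx_k$ on $l_p$; it is invertible with $\lVert U_u\rVert=\lVert U_u^{-1}\rVert=1$, and $U_uS_mU_u^{-1}=u^mS_m$, $U_uB_mU_u^{-1}=B_m$. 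Combining these with Proposition~\ref{p:1 otimes chi}, the $u$-th fibre of $\Phi(T)$ is $\sum_m u^mB_mS_m=U_uTU_u^{-1}$.

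Now the two halves of the argument. If $T\in\mathoo s_1$ is invertible in $\mathoo B(l_p)$, then every fibre $U_uTU_u^{-1}$ is invertible in $\mathoo B(l_p)$, so by the Bochner--Phillips theorem $\Phi(T)$ is invertible in $l_1\bigl(\mathbb Z^c,\mathoo B(l_p)\bigr)$. Conversely I would show that $\Phi(\mathoo s_1)$ is full in $l_1\bigl(\mathbb Z^c,\mathoo B(l_p)\bigr)$. For $v\in\mathbb U^c$ the maps $\beta_v(\{D_m\})=\{U_vD_mU_v^{-1}\}$ and $\tau_v(\{D_m\})=\{v^mD_m\}$ are commuting isometric unital automorphisms of $l_1\bigl(\mathbb Z^c,\mathoo B(l_p)\bigr)$, hence so are $\rho_v:=\tau_v^{-1}\beta_v$, and $v\mapsto\rho_v$ is a homomorphism of the group $\mathbb U^c$. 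Using $U_vS_{-m}U_v^{-1}=v^{-m}S_{-m}$ and the standard fact that $W\in\mathoo B(l_p)$ is a multiplication operator if and only if $U_vW=WU_v$ for all $v\in\mathbb U^c$, one sees that $\{D_m\}$ is fixed by all $\rho_v$ precisely when each $D_mS_{-m}$ is a multiplication operator, i.e. precisely when $\{D_m\}\in\Phi(\mathoo s_1)$. Since a fixed-point subalgebra of a group of unital automorphisms is inverse-closed (if $a$ is fixed and $ab=ba=\mathbf1$, then $a\rho_v(b)=\rho_v(b)a=\mathbf1$, whence $\rho_v(b)=b$ by uniqueness of the inverse), $\Phi(\mathoo s_1)$ is full. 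Therefore $\Phi(T)^{-1}\in\Phi(\mathoo s_1)$, say $\Phi(T)^{-1}=\Phi(S)$ with $S\in\mathoo s_1$; then $\Phi(TS-\mathbf1)=\Phi(ST-\mathbf1)=0$ and injectivity of $\Phi$ give $TS=ST=\mathbf1$, so $T^{-1}=S\in\mathoo s_1$.

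The bulk of the work — and the place where care is needed — is the preliminary bookkeeping: verifying that $\Phi$ is multiplicative (this is where the "twisted" convolution hidden in the composition of matrix operators is matched with the ordinary convolution of $l_1\bigl(\mathbb Z^c,\mathoo B(l_p)\bigr)$ after the shifts are absorbed), and verifying the description of multiplication operators as the commutant of the modulation group $\{U_v:v\in\mathbb U^c\}$; for $1\le p<\infty$ the latter follows by testing on finitely supported vectors and density, and the case $p=\infty$ needs a small separate argument. Once these routine facts are in place, the Bochner--Phillips theorem and the fixed-point observation finish the proof uniformly in $p$.
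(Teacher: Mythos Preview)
The paper does not actually prove Theorem~\ref{t:5.2.6}; it merely quotes it from \cite{KurbatovFA90:eng,Kurbatov90:eng,Kurbatov99}. So there is no in-paper proof to compare against. That said, your embedding $\Phi:T\mapsto\{B_mS_m\}_m$ into $l_1\bigl(\mathbb Z^c,\mathoo B(l_p)\bigr)$ is precisely the device the paper itself deploys in the proof of the \emph{next} result, Theorem~\ref{t:s_g is full in s} (there the families $\mathfrak t=\{B_mS_m\}$ and $\mathfrak d=\{A_mS_m\}$ appear). Your strategy is therefore entirely in the spirit of the surrounding material, and the multiplicativity of $\Phi$, the identification of the fibres $(\mathbf1\otimes\chi_u)\Phi(T)=U_uTU_u^{-1}$, and the fixed-point argument with the automorphisms $\rho_v=\tau_v^{-1}\beta_v$ are all correct. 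For $1\le p<\infty$ the proof is complete.

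The one place that is not yet a proof is the case $p=\infty$, and it is more than a ``small separate argument''. Your fixed-point description of $\Phi(\mathoo s_1)$ hinges on the claim that an operator $W\in\mathoo B\bigl(l_\infty(\mathbb Z^c,X)\bigr)$ commuting with every modulation $U_v$ must be a multiplication operator. For $p<\infty$ this follows by testing on $e_j\otimes y$ and using density of finitely supported vectors. For $p=\infty$ neither ingredient is available: $c_{00}$ is not dense in $l_\infty$, and $v\mapsto U_v$ is not strongly continuous (indeed $\lVert U_v x-x\rVert_\infty=\sup_k|v^k-1|\,\lVert x_k\rVert$ does not tend to $0$), so one cannot simply average over $\mathbb U^c$ and pull $W$ through a Bochner integral. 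Testing on $e_j\otimes y$ still shows that the ``matrix'' $P_iWE_j$ is diagonal, but on $l_\infty$ a bounded operator is not determined by its matrix. In short, the commutant assertion needs an argument that genuinely handles the lack of separability/strong continuity; as written, the $p=\infty$ case has a gap. One way to close it (and this is closer to what the cited references do) is to avoid the commutant description altogether and instead show directly, from the convolution identities $\sum_m B_mS_mD_{k-m}=[k=0]\,\mathbf1=\sum_m D_mB_{k-m}S_{k-m}$, that each $D_mS_{-m}$ is a multiplication operator, or to pass through $c_0(\mathbb Z^c,X)$ where density does hold and then extend.
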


\begin{theorem}[{\rm\cite{Baskakov90:eng,Baskakov97a:eng,Baskakov2004:eng}}]\label{t:s_g is full in s}
Let assumptions {\rm(a)--(e)} from Definition~\ref{def:weight} be fulfilled.
Then the subalgebra $\mathoo s_{1,g}\bigl(\mathbb Z^c,\mathoo B(X)\bigr)$ is full in the algebra $\mathoo B\bigl(l_p(\mathbb Z^c,X)\bigr)$ for all $1\le p\le\infty$.
\end{theorem}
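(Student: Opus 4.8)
**The plan is to derive Theorem~\ref{t:s_g is full in s} from the Bochner--Phillips machinery developed above, exactly in the way Theorem~\ref{th:invertibility in l_{1,g}(Z,B)} and Corollary~\ref{cor:l1gfull} were obtained.** The key observation is that the representation~\eqref{e:T=sum:2}, namely $T=\sum_{m\in\mathbb Z^c}B_mS_m$, exhibits every $T\in\mathoo s_{1,g}$ as an element of a convolution-type algebra. Concretely, I would let $\mathoo B=\mathoo B\bigl(l_p(\mathbb Z^c,X)\bigr)$ and consider the algebra $l_{1,g}\bigl(\mathbb Z^c,\mathoo B\bigr)$ of families $\{B_m\}$ with $\sum_m g(m)\lVert B_m\rVert<\infty$. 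However, this is not quite an algebra under the right multiplication, because the product must encode composition of operators of the form $B_mS_m$; since $S_m B = (S_m B S_m^{-1}) S_m$ and conjugation by $S_m$ sends a multiplication operator to another multiplication operator (with shifted coefficients), the relevant object is a \emph{twisted} (crossed-product) convolution algebra $\mathcal A_g$ on $\mathbb Z^c$ with values in the algebra $\mathcal M$ of bounded multiplication operators, where $\mathcal M\simeq l_\infty\bigl(\mathbb Z^c,\mathoo B(X)\bigr)$. One then has a natural morphism $\Phi:\mathcal A_g\to\mathoo B$ sending $\{B_m\}\mapsto\sum_m B_mS_m$, whose image (after adjoining the identity) is exactly $\widetilde{\mathoo s}_{1,g}$.

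\textbf{Step 1:} Check that $\mathcal A_g$ is a Banach algebra and that $\Phi$ is a continuous morphism of algebras; this is immediate from Proposition~\ref{l:ClassSg} and Proposition~\ref{p:s_{1,g}:2} together with $\lVert S_m\rVert=1$. \textbf{Step 2:} The key reduction is to show that $\mathcal A_g$ is full in the untwisted analogue $\mathcal A_1$ (with $g\equiv1$), i.e.\ that if $\{B_m\}\in\mathcal A_g$ is invertible in $\mathcal A_1$ then its inverse already lies in $\mathcal A_g$. \textbf{Step 3:} Combine this with Theorem~\ref{t:5.2.6}, which says $\mathoo s_1$ is full in $\mathoo B\bigl(l_p(\mathbb Z^c,X)\bigr)$: given $T\in\mathoo s_{1,g}$ with $\mathbf1+T$ invertible in $\mathoo B(l_p)$, Theorem~\ref{t:5.2.6} gives $(\mathbf1+T)^{-1}=\mathbf1+T_1$ with $T_1\in\mathoo s_1$; one then lifts this to an invertibility statement in $\mathcal A_1$, applies Step~2 to conclude the lift lies in $\mathcal A_g$, and pushes back down via $\Phi$ to get $T_1\in\mathoo s_{1,g}$.

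\textbf{The main obstacle is Step~2}, the fullness of the twisted weighted convolution algebra. For the \emph{untwisted} case this is exactly what Theorem~\ref{th:invertibility in l_{1,g}(Z,B)} delivers: invertibility in $l_{1,g}(\mathbb Z^c,\mathoo B)$ is equivalent, via Bochner--Phillips, to invertibility of $\sum_k u^k a_k$ in $\mathoo B$ for all $u\in\mathbb U^c$, and this condition does not see the weight at all. In the twisted setting, the character-space argument must be replaced by the appropriate noncommutative version: one uses that the twisted algebra $\mathcal A_g$ is a module over the commutative algebra $l_{1,g}(\mathbb Z^c,\mathbb C)$ acting by the Fourier-type action $u\mapsto$ (conjugation/modulation), and one invokes the Bochner--Phillips theorem (Theorem~\ref{t:Bochner-Phillips:K}) for the central subalgebra. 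Equivalently, and more in the spirit of~\cite{Baskakov90:eng}, one exploits that for each $u\in\mathbb U^c$ the modulation $T\mapsto\sum_m u^m B_mS_m$ is an isometric automorphism of both $\mathoo s_1$ and $\mathoo s_{1,g}$, so that invertibility of $\mathbf1+T$ forces uniform invertibility of $\mathbf1+\sum_m u^m B_mS_m$; the weighted decay of the inverse then follows because the assignment $u\mapsto(\mathbf1+\sum_m u^m B_mS_m)^{-1}$ extends holomorphically to a polydisc/polyannulus whose size is governed by $g$ (using assumption~(e), i.e.\ subexponential growth of $g$), and the Fourier coefficients of this holomorphic operator-valued function are precisely the coefficients of $T_1$, with the required $l_{1,g}$ estimate coming from Cauchy-type bounds. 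Verifying that the holomorphic extension genuinely reaches the radius dictated by $g$ — rather than merely $\mathbb U^c$ — is the delicate point, and is where assumption~(e) (subexponential weight) enters essentially; without it the subalgebra need not be full.

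\textbf{Remark on alternatives:} Instead of the crossed-product picture, one may argue more directly. Write $R=(\mathbf1+T)^{-1}=\mathbf1+T_1$ with $T_1$ generated by a matrix $(r_{km})$ and $\lVert r_{km}\rVert\le\rho_m$, $\rho\in l_1$, by Theorem~\ref{t:5.2.6}. Conjugating by the diagonal operator $D_u$ with $(D_ux)_k=u^kx_k$ (for $u\in(\mathbb C\setminus\{0\})^c$ with $|u^m|\le g(m)$) sends $T$ to $T^{(u)}\in\mathoo s_{1,g}$ whose coefficients are $u^mb_{km}$, hence bounded in $\mathoo s_1$-norm uniformly in such $u$ by $\sum_mg(m)\beta_m=\lVert\beta\rVert_{l_{1,g}}$; consequently $\mathbf1+T^{(u)}$ is invertible with uniformly bounded inverse, and one identifies $D_uT_1D_u^{-1}=T_1^{(u)}$, giving $\sum_m|u^m|\,\lVert r_{km}\rVert\le C$ uniformly, whence $\rho\in l_{1,g}$ by taking a supremum over admissible $u$ exactly as in the proof of Proposition~\ref{pr:norm character in algebra l{1,g}:c}. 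This second route avoids building the crossed-product algebra explicitly but relies on the same uniform-invertibility estimate, which remains the crux.
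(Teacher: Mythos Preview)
Your overall architecture matches the paper's: reduce via Theorem~\ref{t:5.2.6} to showing that $\mathoo s_{1,g}$ is full in $\mathoo s_1$, then handle that by passing to a convolution algebra where Corollary~\ref{cor:l1gfull} (the Bochner--Phillips consequence) does the work. The difference is in how that convolution algebra is set up, and the paper's choice makes your ``main obstacle'' evaporate.

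You insist on a \emph{twisted} crossed-product $\mathcal A_g$ with coefficients in the multiplication operators $\mathcal M$, which then forces you either to seek a noncommutative Bochner--Phillips for crossed products or to run the holomorphic-extension argument whose ``delicate point'' you correctly identify but do not resolve. The paper instead absorbs the shift into the coefficient: it takes $\mathfrak t_m = B_mS_m$ as an element of the full operator algebra $\mathoo B\bigl(l_p(\mathbb Z^c,X)\bigr)$ and regards $\mathfrak t=\{B_mS_m\}_{m\in\mathbb Z^c}$ as an element of the \emph{untwisted} algebra $l_{1,g}\bigl(\mathbb Z^c,\mathoo B(l_p)\bigr)$. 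The ordinary convolution $(\mathfrak t*\mathfrak d)_k=\sum_m(B_mS_m)(A_{k-m}S_{k-m})$ already encodes the composition $TD$ correctly, because the commutation $S_mA=A'S_m$ takes place \emph{inside} the coefficient algebra $\mathoo B(l_p)$ and never has to be made explicit. Writing $D=T^{-1}=\sum_mA_mS_m\in\mathoo s_1$ (from Theorem~\ref{t:5.2.6}) and $\mathfrak d=\{A_mS_m\}$, the identities $TD=DT=\mathbf1$ translate, by uniqueness of the expansion~\eqref{e:T=sum:2}, into $\mathfrak t*\mathfrak d=\mathfrak d*\mathfrak t=\delta$ in $l_1\bigl(\mathbb Z^c,\mathoo B(l_p)\bigr)$. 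Now Corollary~\ref{cor:l1gfull} applies verbatim and yields $\mathfrak d\in l_{1,g}\bigl(\mathbb Z^c,\mathoo B(l_p)\bigr)$, i.e.\ $\sum_m g(m)\lVert A_mS_m\rVert=\sum_m g(m)\lVert A_m\rVert<\infty$, which is exactly $D\in\mathoo s_{1,g}$. No twisted Bochner--Phillips, no analytic continuation, no uniform-invertibility estimate.

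Your alternative via conjugation by $D_u$ has a genuine gap precisely at the step you flag as ``the crux'': for $u\notin\mathbb U^c$ the operator $D_u$ is unbounded on $l_p$, so invertibility of $\mathbf1+T^{(u)}$ cannot be read off from invertibility of $\mathbf1+T$ by similarity; establishing it uniformly over the region $|u^m|\le g(m)$ is essentially equivalent to the theorem itself.
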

For the completeness of the exposition, we give a proof of Theorem~\ref{t:s_g is full in s}.
\begin{proof}
By Theorem~\ref{t:5.2.6}, it suffices to verify that the subalgebra $\mathoo s_{1,g}\bigl(\mathbb Z^c,\mathoo B(X)\bigr)$ is full in the algebra $\mathoo s_{1}\bigl(\mathbb Z^c,\mathoo B(X)\bigr)$ for all $1\le p\le\infty$.

We assume that an operator $T\in\mathoo s_{1,g}\bigl(\mathbb Z^c,\mathoo B(X)\bigr)$ is invertible in the algebra $\mathoo s_{1}\bigl(\mathbb Z^c,\mathoo B(X)\bigr)$. We show that actually $T^{-1}\in\mathoo s_{1,g}\bigl(\mathbb Z^c,\mathoo B(X)\bigr)$.

We denote briefly the operator $T^{-1}$ by $D$. By virtue of Theorem~\ref{t:5.2.6}, we have $D\in\mathoo s_{1}\bigl(\mathbb Z^c,\mathoo B(X)\bigr)$. So, by Proposition~\ref{p:s_{1,g}:2}, we can represent the operator $D$ as
\begin{equation*}
D=\sum_{m\in\mathbb Z^c}A_mS_m,
\end{equation*}
where $(A_mx)_k=a_{km}x_k$ are multiplication operators,
and $\sum_{m\in\mathbb Z^c}\lVert A_m\rVert<\infty$. The equalities $TD=\mathbf1$ and $DT=\mathbf1$ are equivalent to
\begin{equation}\label{e:BD=1}
\begin{split}
\sum_{m\in\mathbb Z^c}B_mS_mA_{k-m}S_{k-m}&=
\begin{cases}
\mathbf1_{\mathoo B(l_p(\mathbb Z^c,X))} & \text{when }k=0, \\
0 & \text{when }k\neq0,
\end{cases}\qquad k\in\mathbb Z^c,\\
\sum_{m\in\mathbb Z^c}A_mS_mB_{k-m}S_{k-m}&=
\begin{cases}
\mathbf1_{\mathoo B(l_p(\mathbb Z^c,X))} & \text{when }k=0, \\
0 & \text{when }k\neq0,
\end{cases}
\qquad k\in\mathbb Z^c.
\end{split}
\end{equation}
We consider the families $\mathfrak t=\{B_mS_m\in\mathoo B\bigl(l_p(\mathbb Z^c,X)\bigr):\, m\in \mathbb Z^c\}$ and $\mathfrak d=\{D_mS_m\in\mathoo B\bigl(l_p(\mathbb Z^c,X)\bigr):\, m\in \mathbb Z^c\}$. We interpret them as elements of the Banach algebra $l_{1}\bigl(\mathbb Z^c,\,\mathoo B(l_p(\mathbb Z^c,X))\bigr)$. Moreover, by the assumption, we have $\mathfrak t\in l_{1,g}\bigl(\mathbb Z^c,\mathoo B(l_p(\mathbb Z^c,X))\bigr)$.

By the definition of multiplication in the algebra $l_{1}\bigl(\mathbb Z^c,\,\mathoo B(l_p(\mathbb Z^c,X))\bigr)$ and by~\eqref{e:BD=1}, we have
\begin{align*}
\mathfrak t\mathfrak d&=\Bigl\{\sum_{m\in\mathbb Z^c}B_mS_mA_{k-m}S_{k-m}\in\mathoo B\bigl(l_p(\mathbb Z^c,X)\bigr):\, k\in \mathbb Z^c\Bigr\}=\mathbf1_{l_{1}(\mathbb Z^c,\mathoo B(l_p(\mathbb Z^c,X)))},\\
\mathfrak d\mathfrak t&=\Bigl\{\sum_{m\in\mathbb Z^c}A_mS_mB_{k-m}S_{k-m}\in\mathoo B\bigl(l_p(\mathbb Z^c,X)\bigr):\, k\in \mathbb Z^c\Bigr\}=\mathbf1_{l_{1}(\mathbb Z^c,\mathoo B(l_p(\mathbb Z^c,X)))}.
\end{align*}
Thus, the family $\mathfrak d$ is the inverse of $\mathfrak t$ in the algebra $l_{1}\bigl(\mathbb Z^c,\mathoo B(l_p(\mathbb Z^c,X))\bigr)$. Then, by Corollary~\ref{cor:l1gfull}, $\mathfrak d\in l_{1,g}\bigl(\mathbb Z^c,\mathoo B(l_p(\mathbb Z^c,X))\bigr)$. This means that $T^{-1}\in\mathoo s_{1,g}\bigl(\mathbb Z^c,\mathoo B(X)\bigr)$.
\end{proof}

\section{Nuclear operators}\label{s:nuclear operators}
Let $X$ be a Banach space and $X^*$ be its conjugate. An operator $A\in\mathoo B(X)$ is called~\cite{Grothendieck66,Pietch78:eng,Ruston51a,Ruston51b} \emph{nuclear} if it can be represented in the form
\begin{equation}\label{e:repr of A}
Ax=\sum_{i=1}^\infty a_i(x)y_i,
\end{equation}
where $y_i\in X$, $a_i\in X^*$, and
\begin{equation*}
\sum_{i=1}^\infty\lVert a_i\rVert\cdot\lVert y_i\rVert<\infty.
\end{equation*}
It is usually written briefly as
\begin{equation*}
A=\sum\limits_{i=1}^\infty a_i\otimes y_i.
\end{equation*}
		
We denote the set of all nuclear operators $A\in\mathoo B(X)$ by the symbol $\mathfrak S_1(X)$. We set
\begin{equation}\label{e:norm in S_1}
\lVert A\rVert_{\mathfrak S_1}=\inf\sum_{i=1}^\infty\lVert a_i\rVert\cdot\lVert y_i\rVert,
\end{equation}
where the infimum is taken over all representations of the operator $A$ in the form~\eqref{e:repr of A}.
It is interesting to note that the natural mapping from $X^* \overline{\otimes}_\pi X$ to $\mathfrak S_1(X)$ is not injective in general~\cite[p.~34]{Defant-Floret}. Clearly,
\begin{equation}\label{e:norm S_1>norm}
\lVert A\rVert_{\mathoo B(X)}\le\lVert A\rVert_{\mathfrak S_1},\qquad A\in\mathfrak S_1(X).
\end{equation}

\begin{proposition}[{\rm\cite[6.3.2]{Pietch78:eng}}]\label{p:Pietch 6.3.2}
The set $\mathfrak S_1(X)$ is an ideal in $\mathoo B(X)$. Moreover,
\begin{equation*}
\lVert JA\rVert_{\mathfrak S_1(X)},\;\lVert AJ\rVert_{\mathfrak S_1(X)}\le
\lVert J\rVert_{\mathfrak S_1(X)}\lVert A\rVert_{\mathoo B(X)},\qquad
J\in\mathfrak S_1(X),\,A\in\mathoo B(X).
\end{equation*}
The ideal $\mathfrak S_1(X)$ is complete with respect to the norm~\eqref{e:norm in S_1}.
\end{proposition}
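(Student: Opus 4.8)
The plan is to establish the three assertions in turn: that $\mathfrak S_1(X)$ is stable under left and right composition with bounded operators, the accompanying norm estimates, and completeness with respect to~\eqref{e:norm in S_1}. Linearity of $\mathfrak S_1(X)$ is a preliminary step and is immediate — concatenating representations $J_1=\sum_i a_i\otimes y_i$ and $J_2=\sum_i a_i'\otimes y_i'$ exhibits $J_1+J_2$ as nuclear with $\lVert J_1+J_2\rVert_{\mathfrak S_1}\le\lVert J_1\rVert_{\mathfrak S_1}+\lVert J_2\rVert_{\mathfrak S_1}$, and homogeneity under scalars is clear — so $\mathfrak S_1(X)$ is at least a linear subspace, and once the ideal property is in hand it is in particular a subalgebra of $\mathoo B(X)$.

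For the ideal property I would argue directly on the series representation. Let $J\in\mathfrak S_1(X)$ with $J=\sum_{i=1}^\infty a_i\otimes y_i$, $\sum_i\lVert a_i\rVert\cdot\lVert y_i\rVert<\infty$, and let $A\in\mathoo B(X)$. For the right multiple, $(AJ)x=\sum_i a_i(x)(Ay_i)$, so $AJ=\sum_i a_i\otimes(Ay_i)$ is a nuclear representation and $\sum_i\lVert a_i\rVert\cdot\lVert Ay_i\rVert\le\lVert A\rVert_{\mathoo B(X)}\sum_i\lVert a_i\rVert\cdot\lVert y_i\rVert$; passing to the infimum over all representations of $J$ gives $\lVert AJ\rVert_{\mathfrak S_1}\le\lVert A\rVert_{\mathoo B(X)}\lVert J\rVert_{\mathfrak S_1}$. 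For the left multiple, $(JA)x=\sum_i a_i(Ax)y_i=\sum_i (A^*a_i)(x)y_i$, so $JA=\sum_i(A^*a_i)\otimes y_i$, and from $\lVert A^*a_i\rVert\le\lVert A\rVert_{\mathoo B(X)}\lVert a_i\rVert$ the symmetric estimate $\lVert JA\rVert_{\mathfrak S_1}\le\lVert A\rVert_{\mathoo B(X)}\lVert J\rVert_{\mathfrak S_1}$ follows the same way. Before turning to completeness I would also record that~\eqref{e:norm in S_1} is genuinely a norm: the triangle inequality and homogeneity are the linearity computations above, and nondegeneracy is forced by~\eqref{e:norm S_1>norm}, since $\lVert A\rVert_{\mathfrak S_1}=0$ implies $\lVert A\rVert_{\mathoo B(X)}=0$.

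For completeness I would use the standard criterion that a normed space is Banach if and only if every absolutely convergent series converges. Given $A_n\in\mathfrak S_1(X)$ with $\sum_n\lVert A_n\rVert_{\mathfrak S_1}<\infty$, choose for each $n$ a representation $A_n=\sum_i a_{n,i}\otimes y_{n,i}$ with $\sum_i\lVert a_{n,i}\rVert\cdot\lVert y_{n,i}\rVert\le\lVert A_n\rVert_{\mathfrak S_1}+2^{-n}$. Then the doubly indexed family $\{a_{n,i}\otimes y_{n,i}\}_{n,i}$ satisfies $\sum_{n,i}\lVert a_{n,i}\rVert\cdot\lVert y_{n,i}\rVert<\infty$, so for each $x\in X$ the series $\sum_{n,i}a_{n,i}(x)y_{n,i}$ converges absolutely in $X$; enumerating the index set as a single sequence, this defines an operator $A\in\mathoo B(X)$ with $\lVert Ax\rVert\le\bigl(\sum_{n,i}\lVert a_{n,i}\rVert\cdot\lVert y_{n,i}\rVert\bigr)\lVert x\rVert$, and $A$ is nuclear by construction. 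Since $A-\sum_{n=1}^N A_n=\sum_{n>N}\sum_i a_{n,i}\otimes y_{n,i}$ is a nuclear representation of the tail, $\lVert A-\sum_{n=1}^N A_n\rVert_{\mathfrak S_1}\le\sum_{n>N}\bigl(\lVert A_n\rVert_{\mathfrak S_1}+2^{-n}\bigr)\to0$, proving convergence in $\mathfrak S_1(X)$.

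The computations here are routine; the one point demanding care is the rearrangement in the completeness argument — one must check that reindexing the absolutely summable double family $\{a_{n,i}\otimes y_{n,i}\}$ as a single sequence produces a well-defined operator independent of the ordering, which is exactly guaranteed by absolute convergence of $\sum_{n,i}\lVert a_{n,i}\rVert\cdot\lVert y_{n,i}\rVert$, and one should argue throughout with operators and their representations rather than with tensors, since (as already noted in the text) the natural map $X^*\overline{\otimes}_\pi X\to\mathfrak S_1(X)$ need not be injective.
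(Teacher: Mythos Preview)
The paper does not actually prove this proposition; it is stated with a citation to Pietsch~\cite[6.3.2]{Pietch78:eng} and used as a black box. Your argument is correct and is essentially the standard one found in that reference: manipulate a nuclear representation directly to get the ideal property and the norm estimates, and establish completeness via the absolutely-summable-series criterion by splicing together near-optimal representations of the summands. There is nothing to compare against in the paper itself.
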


\begin{proposition}[{\rm\cite[6.3.1 and 1.11.2]{Pietch78:eng}}]\label{p:Pietch 1.11.2}
Any nuclear operator is compact.
\end{proposition}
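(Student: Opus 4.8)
The plan is to exhibit a nuclear operator as the operator-norm limit of finite-rank operators, and then to invoke the classical fact that the set of compact operators is norm-closed in $\mathoo B(X)$.

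First I would fix a representation $Ax=\sum_{i=1}^\infty a_i(x)y_i$ with $\sum_{i=1}^\infty\lVert a_i\rVert\cdot\lVert y_i\rVert<\infty$ and, for each $n\in\mathbb N$, introduce the truncation
\begin{equation*}
A_nx=\sum_{i=1}^n a_i(x)y_i,\qquad x\in X.
\end{equation*}
Each $A_n$ is bounded and its range lies in the finite-dimensional subspace spanned by $y_1,\dots,y_n$; hence $A_n$ is a finite-rank operator, and finite-rank operators are compact (the image of the closed unit ball is a bounded subset of a finite-dimensional space, which is relatively compact).

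Next I would estimate the tail. For every $x\in X$ we have
\begin{equation*}
\lVert Ax-A_nx\rVert=\Bigl\lVert\sum_{i=n+1}^\infty a_i(x)y_i\Bigr\rVert\le\Bigl(\sum_{i=n+1}^\infty\lVert a_i\rVert\cdot\lVert y_i\rVert\Bigr)\lVert x\rVert,
\end{equation*}
so that $\lVert A-A_n\rVert_{\mathoo B(X)}\le\sum_{i=n+1}^\infty\lVert a_i\rVert\cdot\lVert y_i\rVert$. Since the series $\sum_{i=1}^\infty\lVert a_i\rVert\cdot\lVert y_i\rVert$ converges, its tail tends to $0$, hence $A_n\to A$ in the norm of $\mathoo B(X)$. (The same estimate incidentally shows that the series defining $A$ converges in operator norm, so $A$ is indeed a bounded operator in the first place.)

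Finally I would recall that the set of compact operators in $\mathoo B(X)$ is a closed linear subspace, i.e. a uniform limit of compact operators is compact; applying this to the sequence $A_n$ gives that $A$ is compact. There is essentially no obstacle here; the only step meriting a word of care is this last one — the norm-closedness of the ideal of compact operators — which I would either cite as a standard fact or justify in a line by a total-boundedness argument if complete self-containedness is wanted.
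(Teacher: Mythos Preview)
Your argument is correct and is in fact the standard proof: truncate the nuclear representation to get finite-rank approximants, estimate the tail to obtain operator-norm convergence, and conclude by the norm-closedness of the compact operators. There is no comparison to make with the paper's proof, because the paper does not supply one; the proposition is stated with a bare reference to Pietsch~\cite[6.3.1 and 1.11.2]{Pietch78:eng} and used as a black box.
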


\begin{corollary}\label{c:nuclear is proper}
If the space $X$ is infinite-dimensional, then the ideal $\mathfrak S_1(X)$ is proper.
\end{corollary}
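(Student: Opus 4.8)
The plan is to verify separately the two conditions in the definition of a proper ideal, namely that $\mathfrak S_1(X)\neq\{0\}$ and that $\mathfrak S_1(X)\neq\mathoo B(X)$.

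For the first condition, I would exhibit a concrete nonzero nuclear operator. Since $X\neq\{0\}$, the Hahn--Banach theorem yields a nonzero functional $a\in X^*$; choosing any nonzero $y\in X$, the rank-one operator $a\otimes y$ (that is, $x\mapsto a(x)y$) is nuclear by its obvious one-term representation of the form~\eqref{e:repr of A}, and it is nonzero. Hence $\mathfrak S_1(X)\neq\{0\}$. (This step does not actually use infinite-dimensionality.)

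For the second condition, the key point is that the identity operator $\mathbf1\in\mathoo B(X)$ fails to be compact when $X$ is infinite-dimensional: by Riesz's lemma the closed unit ball of an infinite-dimensional normed space is not relatively compact, so $\mathbf1$ does not carry bounded sets into relatively compact sets. By Proposition~\ref{p:Pietch 1.11.2}, however, every nuclear operator is compact, and therefore $\mathbf1\notin\mathfrak S_1(X)$. Since $\mathbf1\in\mathoo B(X)$, this gives $\mathfrak S_1(X)\neq\mathoo B(X)$. Combining the two observations, $\mathfrak S_1(X)$ is a proper ideal.

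There is essentially no obstacle here: the argument is a direct consequence of the definitions together with Proposition~\ref{p:Pietch 1.11.2}, and the only nonformal ingredient is the well-known noncompactness of the identity on an infinite-dimensional space, which follows from Riesz's lemma; this is precisely the point at which the hypothesis on $\dim X$ is used.
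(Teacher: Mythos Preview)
Your proof is correct and follows essentially the same route as the paper's: both use Proposition~\ref{p:Pietch 1.11.2} together with the noncompactness of the closed unit ball in an infinite-dimensional space to conclude that $\mathbf1\notin\mathfrak S_1(X)$. The only minor difference is that you spell out the verification of $\mathfrak S_1(X)\neq\{0\}$ via a rank-one operator, whereas the paper treats this as obvious and phrases the main step as ``a compact operator cannot be invertible'' rather than ``$\mathbf1$ is not compact''; these are equivalent formulations of the same argument.
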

\begin{proof}
The fact~\cite[ch. IV \S~3, Theorem 5]{Dunford-Schwartz-I:eng} that the closed unit ball is not compact in any infinite-dimensional Banach space implies that a compact operator can not be invertible.
Now the proof follows from Proposition~\ref{p:Pietch 1.11.2}.
\end{proof}

We denote by $\widetilde{\mathfrak S_1(X)}$ the ideal $\mathfrak S_1(X)$ with an adjoint unit. We realize $\widetilde{\mathfrak S_1(X)}$ as a subalgebra of $\mathoo B(X)$.

The following theorem is probably known. But we have not found an appropriate reference.
\begin{theorem}\label{t:nuclear is full}
Let $X$ be a Banach space. Then $\widetilde{\mathfrak S_1(X)}$ is a full subalgebra of the algebra $\mathoo B(X)$.
\end{theorem}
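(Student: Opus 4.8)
The plan is to show that if $A = \lambda\mathbf1 + J$ with $J \in \mathfrak S_1(X)$ is invertible in $\mathoo B(X)$, then $A^{-1}$ again has the form $\mu\mathbf1 + J_1$ with $J_1 \in \mathfrak S_1(X)$. The first step is to dispose of the easy scalar part. Since $A$ is invertible, $\lambda \neq 0$: indeed, if $\lambda = 0$ then $A = J$ is nuclear, hence compact (Proposition~\ref{p:Pietch 1.11.2}), and a compact operator on an infinite-dimensional space is not invertible (as in Corollary~\ref{c:nuclear is proper}); if $X$ is finite-dimensional the whole algebra $\mathoo B(X)$ equals $\widetilde{\mathfrak S_1(X)}$ and there is nothing to prove. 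So assume $\lambda \neq 0$ and normalize: dividing by $\lambda$, it suffices to treat $A = \mathbf1 + J$ with $J \in \mathfrak S_1(X)$, and to show $(\mathbf1+J)^{-1} = \mathbf1 + J_1$ with $J_1$ nuclear.

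The key step is the resolvent identity trick. Write $B = (\mathbf1+J)^{-1}$ and set $J_1 := B - \mathbf1$. From $(\mathbf1+J)B = \mathbf1$ we get $B + JB = \mathbf1$, hence $J_1 = B - \mathbf1 = -JB$. Since $\mathfrak S_1(X)$ is a two-sided ideal in $\mathoo B(X)$ (Proposition~\ref{p:Pietch 6.3.2}) and $J \in \mathfrak S_1(X)$, $B \in \mathoo B(X)$, the product $JB$ is nuclear; therefore $J_1 = -JB \in \mathfrak S_1(X)$. This shows $B = \mathbf1 + J_1 \in \widetilde{\mathfrak S_1(X)}$, which is exactly the assertion that $\widetilde{\mathfrak S_1(X)}$ is inverse-closed in $\mathoo B(X)$.

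I would round this off by remarking explicitly that $\widetilde{\mathfrak S_1(X)}$ is indeed a unital subalgebra of $\mathoo B(X)$ containing $\mathbf1$ — this is immediate from the fact that $\mathfrak S_1(X)$ is a subalgebra (being an ideal) and from the construction of the algebra with adjoined unit — so the word ``full'' applies in the sense defined in Section~\ref{s:B-algebras}. There is essentially no obstacle here: the entire argument rests on the ideal property of $\mathfrak S_1(X)$, and the only point requiring a moment's care is the reduction to $\lambda \neq 0$, which uses compactness of nuclear operators. (One could equally invoke connectedness of the resolvent set of a compact operator, but the direct computation $J_1 = -JB$ is shorter and makes the nuclearity of $J_1$ transparent.)
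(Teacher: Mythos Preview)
Your proof is correct, and it is genuinely different from the paper's. The paper argues via the spectral theory of compact operators: having reduced to showing $(\mathbf1-\nu J)^{-1}\in\widetilde{\mathfrak S_1(X)}$, it first observes that $(\mathbf1-\mu J)^{-1}\in\widetilde{\mathfrak S_1(X)}$ for some small $\mu$ by a Neumann series in the nuclear norm, then uses that the resolvent set of a compact operator is arcwise connected to join $\mu$ to $\nu$ by a path, and finally walks along the path in finitely many steps, invoking at each step the perturbation series $A^{-1}+A^{-1}BA^{-1}+\cdots$ with norm control from Proposition~\ref{p:Pietch 6.3.2}. Your argument bypasses all of this: the single identity $(\mathbf1+J)^{-1}-\mathbf1=-J(\mathbf1+J)^{-1}$, combined with the ideal property of $\mathfrak S_1(X)$, yields the conclusion immediately. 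This is both shorter and more robust (it works verbatim for any two-sided ideal of $\mathoo B(X)$, without any spectral input beyond $\lambda\neq0$). The paper's route, by contrast, illustrates the general principle mentioned in its Introduction that inverse-closedness for compact-type classes is tied to connectedness of the resolvent set --- a theme worth knowing, even if overkill here. It is amusing that the paper itself uses essentially your quotient/ideal trick in the proof of Theorem~\ref{t:fin:convolution dominayed:nuclear}.
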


\begin{proof}
If $X$ is finite-dimensional, $\widetilde{\mathfrak S_1(X)}=\mathoo B(X)$. Therefore, without loss of generality, we may assume that $X$ is infinite-dimensional.

Let $J\in\mathfrak S_1(X)$, $\lambda\in\mathbb C$, and the operator $\lambda\mathbf1-J$ be invertible in the algebra $\mathoo B(X)$. We show that $(\lambda\mathbf1-J)^{-1}\in\widetilde{\mathfrak S_1(X)}$.

Since $\mathfrak S_1(X)$ is proper (Corollary~\ref{c:nuclear is proper}), $\lambda\neq0$. We set $\nu=\frac1{\lambda}$. Clearly, it is enough to prove that the invertibility of $\mathbf1-\nu J$ in the algebra $\mathoo B(X)$ implies that $(\mathbf1-\nu J)^{-1}\in\widetilde{\mathfrak S_1(X)}$.

By Proposition~\ref{p:inequality spectr and norm of operator}, since the algebra $\widetilde{\mathfrak S_1(X)}$ is complete (in its norm~\eqref{e:norm in S_1}), there exists $\mu\neq0$ (sufficiently small) such that the operator $\mathbf1-\mu J$ is invertible in $\widetilde{\mathfrak S_1(X)}$.

Since any nuclear operator is compact (Proposition~\ref{p:Pietch 1.11.2}), the spectrum of $J$ in $\mathoo B(X)$ is~\cite[ch. VII, \S~4, Theorem 5]{Dunford-Schwartz-I:eng} denumerable and has no point of accumulation in $\mathbb C$ except possibly zero. Hence, the resolvent set of $J$ is arcwise connected. Therefore, there exists a continuous function $z:\,[0,1]\to\mathbb C\setminus\{0\}$ such that $z(0)=\mu$ and $z(1)=\nu$, and the operator $\mathbf1-z(t)J$ is invertible in $\mathoo B(X)$ for all $t\in[0,1]$.

We set
\begin{equation*}
M=\max_{t\in[0,1]}\bigl\lVert(\mathbf1-z(t)J)^{-1}\bigl\rVert_{\mathoo B(X)}.
\end{equation*}
We take points $0=t_0<t_1<\dots<t_n=1$ such that
\begin{equation*}
\max_{k=1,\dots,n}\bigl|z(t_{k})-z(t_{k-1})\bigr|\cdot\lVert J\rVert_{\mathfrak S_1(X)}<\frac1M.
\end{equation*}
We show that $\bigl(\mathbf1-z(t_k)J\bigr)^{-1}\in\widetilde{\mathfrak S_1(X)}$ for all $k=0,1,\dots,n$; in particular, $(\mathbf1-\nu J)^{-1}=\bigl(\mathbf1-z(t_n)J\bigr)^{-1}\in\widetilde{\mathfrak S_1(X)}$.
We proceed by induction on $k$. For $k=0$, the statement is true by the assumption. Let the statement be true for $k-1$. We show that it is true for $k$. We make use of the representation
\begin{equation*}
\mathbf1-z(t_k)J=\bigl(\mathbf1-z(t_{k-1})J\bigr)-\bigl(z(t_{k-1}-z(t_{k})\bigr)J.
\end{equation*}
We consider the series
\begin{equation}\label{e:series}
A^{-1}+A^{-1}BA^{-1}+A^{-1}BA^{-1}BA^{-1}+\ldots,
\end{equation}
where $A=\mathbf1-z(t_{k-1})J$ and $B=\bigl(z(t_{k-1})-z(t_{k})\bigr)J$. By Proposition~\ref{p:Pietch 6.3.2}, we have
\begin{equation*}
\lVert A^{-1}B\rVert_{\mathfrak S_1(X)}\le\lVert A^{-1}\rVert_{\mathoo B(X)}\cdot\lVert B\rVert_{\mathfrak S_1(X)}.
\end{equation*}
Since
\begin{align*}
\lVert A^{-1}\rVert_{\mathoo B(X)}&=\bigl\lVert\bigl(\mathbf1-z(t_{k-1})J\bigr)^{-1}\bigl\rVert_{\mathoo B}\le M,\\
\lVert B\rVert_{\mathfrak S_1(X)}&=\bigl\lVert(z(t_{k-1}-z(t_{k}\bigr)J\bigl\rVert_{\mathfrak S_1(X)}<\frac1M,
\end{align*}
this estimate gives
\begin{equation*}
\lVert A^{-1}B\rVert_{\mathfrak S_1(X)}<M\frac1M=1,
\end{equation*}
which implies that series~\eqref{e:series} converges in the norm of $\widetilde{\mathfrak S_1(X)}$ and thus defines an element of $\widetilde{\mathfrak S_1(X)}$. It is (well-known and) straightforward to verify that the sum of series~\eqref{e:series} is $(A-B)^{-1}=\bigl(\mathbf1-z(t_k)J\bigr)^{-1}$. Thus, $\bigl(\mathbf1-z(t_{k-1})J\bigr)^{-1}\in\widetilde{\mathfrak S_1(X)}$
implies $\bigl(\mathbf1-z(t_k)J\bigr)^{-1}\in\widetilde{\mathfrak S_1(X)}$
for all $k=1,\dots,n$. In particular, it follows that $(\mathbf1-\nu J)^{-1}\in\widetilde{\mathfrak S_1(X)}$.
\end{proof}

\section{Nuclear operators in $L_p$}\label{s:nuclear operators:L_p}
We denote by $\lambda$ the Lebesgue measure on $\mathbb R^c$.
Let $E\subseteq\mathbb R^c$ be a measurable subset.
We denote the integral of a summable function $x:\,E\to\mathbb C$ with respect to the Lebesgue measure $\lambda$ by the symbol $\int_E x(t)\,d\lambda(t)$ or simply $\int_E x(t)\,dt$.

We denote by $\mathscr L_p=\mathscr L_p(E,\mathbb C)$, $1\le p<\infty$, the space
of all measurable functions $u:\,E\to\mathbb C$ bounded in the semi-norm
\begin{equation*}
\lVert u\rVert=\lVert u\rVert_{L_p}=\Bigl(\int_{E}|u(t)|^p\,dt\Bigr)^{1/p},
\end{equation*}
and we denote by $\mathscr L_\infty=\mathscr L_\infty(E,\mathbb C)$ the space
of all measurable essentially bounded functions $u:\,E\to\mathbb C$ with the
semi-norm
\begin{equation*}
\Vert u\Vert=\Vert u\Vert_{L_\infty}=\esssup_{t\in E}|u(t)|.
\end{equation*}
Sometimes it is convenient to admit that functions $u\in\mathscr L_p$ may be undefined on a negligible (i.~e. of measure zero) set.
Finally, we denote by $L_p=L_p(E)=L_p(E,\mathbb C)$, $1\le p\le\infty$, the Banach
space of all classes of functions $u\in\mathscr L_p$ with the identification almost
everywhere. Usually they do not distinguish the spaces $\mathscr L_p$ and $L_p$. For more details, see~\cite{Bourbaki-Int1-6:eng}.

Numbers $p,q\in [1,+\infty]$ connected by the relation $\frac1p+\frac1q=1$ are called~\cite[ch. IV, \S~6.4]{Bourbaki-Int1-6:eng} \emph{conjugate exponents}.

\begin{proposition}[{\rm\cite[ch.~4, \S~6, Corollary 4 and Proposition 3]{Bourbaki-Int1-6:eng}}]\label{p:Holder}
Let $p,q\in [1,+\infty]$ be conjugate exponents. Then for any functions $x\in\mathcal L_p(E,\mathbb C)$ and $y\in\mathcal L_q(E,\mathbb C)$, we have $xy\in\mathcal L_1(E,\mathbb C)$ and
\begin{equation*}
\Bigl|\int_Ex(t)y(t)\,dt\Bigr|\le\lVert x\rVert_{L_p}\cdot\lVert y\rVert_{L_q}.
\end{equation*}
Moreover,
\begin{align*}
\lVert x\rVert_{L_p}&=\sup\Bigl\{\,\Bigl|\int_Ex(t)y(t)\,dt\Bigr|:\,\lVert y\rVert_{L_q}\le1\,\Bigr\},\\
\lVert y\rVert_{L_q}&=\sup\Bigl\{\,\Bigl|\int_Ex(t)y(t)\,dt\Bigr|:\,\lVert x\rVert_{L_p}\le1\,\Bigr\}.
\end{align*}
\end{proposition}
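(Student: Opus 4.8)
The statement is the classical Hölder inequality together with the duality description of the $L_p$ seminorms; the paper only cites Bourbaki for it, but here is the argument I would give.

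The plan is to reduce the inequality to the pointwise Young inequality and then obtain the duality formulas by exhibiting near-extremal functions. First I would prove that for $a,b\ge0$ and conjugate exponents $p,q\in(1,\infty)$ one has $ab\le\frac{a^p}{p}+\frac{b^q}{q}$; this is just concavity of the logarithm, $\ln(ab)=\frac1p\ln a^p+\frac1q\ln b^q\le\ln\!\bigl(\frac1p a^p+\frac1q b^q\bigr)$. After discarding the trivial cases in which $\lVert x\rVert_{L_p}$ or $\lVert y\rVert_{L_q}$ equals $0$ or $+\infty$, I would apply this pointwise with $a=|x(t)|/\lVert x\rVert_{L_p}$, $b=|y(t)|/\lVert y\rVert_{L_q}$ and integrate, which gives $\int_E|x(t)y(t)|\,dt\le1$ after normalization, hence $xy\in\mathcal L_1(E,\mathbb C)$ and $\bigl|\int_E xy\bigr|\le\lVert x\rVert_{L_p}\lVert y\rVert_{L_q}$. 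The endpoint cases $(p,q)=(1,\infty)$ and $(\infty,1)$ I would handle directly from the almost-everywhere bound $|x(t)y(t)|\le|x(t)|\,\lVert y\rVert_{L_\infty}$.

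For the two displayed supremum formulas it is enough, by the symmetry of the relation $\frac1p+\frac1q=1$, to prove the first, $\lVert x\rVert_{L_p}=\sup\{|\int_E xy|:\lVert y\rVert_{L_q}\le1\}$. The inequality $\ge$ is immediate from Hölder. For $\le$, when $1<p<\infty$ and $x\neq0$ I would take $y(t)=\overline{x(t)}\,|x(t)|^{p-2}\lVert x\rVert_{L_p}^{1-p}$ on $\{x\neq0\}$ and $y(t)=0$ otherwise; using $(p-1)q=p$ one checks $\lVert y\rVert_{L_q}=1$ and $\int_E x(t)y(t)\,dt=\lVert x\rVert_{L_p}$, so the supremum is actually attained. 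For $p=1$ I would take $y=\overline{\operatorname{sgn}x}$, i.e. $y(t)=\overline{x(t)}/|x(t)|$ where $x(t)\neq0$ and $0$ otherwise; then $\lVert y\rVert_{L_\infty}\le1$ and $\int_E xy=\lVert x\rVert_{L_1}$.

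The one delicate case is $p=\infty$ (hence $q=1$), where the natural candidate $\overline{\operatorname{sgn}x}$ need not be summable; here I would argue by approximation. Given $\varepsilon>0$, the set $A_\varepsilon=\{t\in E:|x(t)|>\lVert x\rVert_{L_\infty}-\varepsilon\}$ has positive measure by the definition of the essential supremum; picking a measurable $B\subseteq A_\varepsilon$ with $0<\lambda(B)<\infty$ and setting $y=\lambda(B)^{-1}\overline{\operatorname{sgn}x}\,\mathbf 1_B$ yields $\lVert y\rVert_{L_1}=1$ and $\int_E xy=\lambda(B)^{-1}\int_B|x|\ge\lVert x\rVert_{L_\infty}-\varepsilon$; letting $\varepsilon\to0$ gives $\le$, and the case $\lVert x\rVert_{L_\infty}=+\infty$ is handled the same way with $\varepsilon$ replaced by an arbitrary bound $M$. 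The main obstacle is precisely this finite-measure truncation: one must know that a set of positive measure contains a subset of finite positive measure (true for the $\sigma$-finite measure spaces occurring here), and one should be careful with the complex sign function and with keeping track of the degenerate cases throughout.
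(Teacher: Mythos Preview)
Your argument is correct and follows the standard route (Young's inequality for the integral bound, explicit extremizers for the duality formulas, with the $\sigma$-finite truncation in the $p=\infty$ case). Note, however, that the paper does not actually prove this proposition: it is stated with a citation to Bourbaki and used as a black box, so there is no paper proof to compare against. Your write-up simply supplies what the authors chose to quote; the only superfluous bit is the mention of the case $\lVert x\rVert_{L_\infty}=+\infty$, which cannot occur since by hypothesis $x\in\mathcal L_p(E,\mathbb C)$ already has finite seminorm.
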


\begin{proposition}\label{p:a.e. conv}
Let $x_i\in\mathcal L_p(E,\mathbb C)$, $1\le p\le\infty$, and $\sum_{i=1}^\infty\lVert x_i\rVert_{L_p}<\infty$. Then for almost all $t\in E$, the series $\sum_{i=1}^\infty x_i(t)$ converges absolutely{\rm;} we denote the sum of the series by $x(t)${\rm;} thus, we obtain a function $x:\,E\to\mathbb C$ defined almost everywhere. It is claimed that $x\in\mathcal L_p(E,\mathbb C)$ and the series $\sum_{i=1}^\infty x_i(t)$ converges to $x$ in the $L_p$-norm.
\end{proposition}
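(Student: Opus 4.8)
The plan is to treat the cases $1\le p<\infty$ and $p=\infty$ separately, in both cases dominating the tails of the series by an integrable (respectively, essentially bounded) majorant and then invoking the monotone and dominated convergence theorems.

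For $1\le p<\infty$, I would first set $g_N=\sum_{i=1}^N|x_i|$ and $g=\sum_{i=1}^\infty|x_i|$, the latter taking values in $[0,+\infty]$. Minkowski's inequality gives $\lVert g_N\rVert_{L_p}\le\sum_{i=1}^N\lVert x_i\rVert_{L_p}\le S$, where $S=\sum_{i=1}^\infty\lVert x_i\rVert_{L_p}<\infty$. Since $g_N^{\,p}\uparrow g^p$ pointwise, the monotone convergence theorem yields $\int_E g^p=\lim_N\int_E g_N^{\,p}\le S^p<\infty$, so $g\in\mathcal L_p$ and, in particular, $g(t)<\infty$ for almost every $t$. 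At every such $t$ the series $\sum_i x_i(t)$ converges absolutely; I define $x(t)$ to be its sum there and $0$ on the exceptional null set. Then $|x|\le g$ almost everywhere, hence $x\in\mathcal L_p$. For convergence in norm, observe that for almost every $t$ one has $\bigl|x(t)-\sum_{i=1}^N x_i(t)\bigr|\le\sum_{i=N+1}^\infty|x_i(t)|\le g(t)$, while the left-hand side tends to $0$ as $N\to\infty$; since $g^p\in L_1$, the dominated convergence theorem gives $\int_E\bigl|x-\sum_{i=1}^N x_i\bigr|^p\to0$, that is, convergence in $L_p$.

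For $p=\infty$, for each $i$ I would pick a null set $N_i$ with $|x_i(t)|\le\lVert x_i\rVert_{L_\infty}$ for $t\notin N_i$, and put $N=\bigcup_i N_i$, again a null set. For $t\notin N$ we have $\sum_i|x_i(t)|\le\sum_i\lVert x_i\rVert_{L_\infty}<\infty$, so the series converges absolutely; define $x(t)$ as its sum. Then $|x(t)|\le\sum_i\lVert x_i\rVert_{L_\infty}$ for $t\notin N$, so $x\in\mathcal L_\infty$, and $\bigl\lVert x-\sum_{i=1}^N x_i\bigr\rVert_{L_\infty}\le\sum_{i=N+1}^\infty\lVert x_i\rVert_{L_\infty}\to0$.

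No genuine obstacle is expected; this is the standard argument underlying completeness of $L_p$. The only points that require a little attention are the measurability of $g$ and of $x$, each being an almost-everywhere pointwise limit of measurable functions, and the fact that the case $p=\infty$ must be argued pointwise off a fixed null set rather than through the convergence theorems.
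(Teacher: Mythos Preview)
Your proof is correct and follows the standard route; it is precisely the argument underlying the Bourbaki reference the paper cites for $1\le p<\infty$, and your treatment of $p=\infty$ matches the paper's sketch (the paper writes $|x_i(t)|<2\lVert x_i\rVert_{L_\infty}$ where you write $|x_i(t)|\le\lVert x_i\rVert_{L_\infty}$, but this is immaterial). The only difference is that the paper outsources the $p<\infty$ case to a citation, whereas you supply the full monotone/dominated convergence argument.
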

\begin{proof}
The case $1\le p<\infty$ is carried through in~\cite[ch.~4, \S~3, Proposition 6]{Bourbaki-Int1-6:eng}, the case $p=\infty$ is actually analyzed in~\cite[ch.~4, \S~6]{Bourbaki-Int1-6:eng}. We describe it in more detail. So, let $p=\infty$. We denote by $F_i$ the set of all points $t$ such that $|x_i(t)|<2\lVert x_i\rVert_{L_\infty}$. By the definition of the space $\mathcal L_\infty$, the sets $F_i$ are of full measure (i.~e. having a negligible complement). Therefore $\cap_{i=1}^\infty F_i$ is a set of full measure as well. Obviously, the series $\sum_{i=1}^\infty x_i(t)$ converges for all $t\in\cap_{i=1}^\infty F_i$.
\end{proof}

\begin{proposition}[{\rm\cite[ch.~5, \S~5, Theorem 4]{Bourbaki-Int1-6:eng}}]\label{p:conj of L_p}
Let $p,q\in [1,+\infty]$ be conjugate exponents, with $p<\infty$. Then the conjugate space of $L_p(E,\mathbb C)$ is naturally isomet\-ri\-cally isomorphic to $L_q(E,\mathbb C)${\rm;} namely, each bounded linear functional on $L_p(E,\mathbb C)$ has the form
\begin{equation*}
f(x)=\int_Ex(t)y(t)\,dt,
\end{equation*}
where $y\in\mathcal L_q(E,\mathbb C)${\rm;} besides, $\lVert f\rVert=\lVert y\rVert_{L_q}$.
\end{proposition}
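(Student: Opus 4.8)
The plan is to establish the two halves of the statement separately: (i) each $y\in\mathcal L_q(E,\mathbb C)$ defines a bounded linear functional $f_y$ of the displayed form and $y\mapsto f_y$ is a linear isometry of $L_q(E,\mathbb C)$ into the conjugate space $L_p(E,\mathbb C)^*$; and (ii) this isometry is surjective. Part (i) is immediate from Proposition~\ref{p:Holder}: H\"older's inequality shows that $f_y(x)=\int_E x(t)y(t)\,dt$ is well defined with $\lvert f_y(x)\rvert\le\lVert x\rVert_{L_p}\lVert y\rVert_{L_q}$, so $\lVert f_y\rVert\le\lVert y\rVert_{L_q}$, while the supremum formula for $\lVert y\rVert_{L_q}$ in the same proposition yields $\lVert f_y\rVert\ge\lVert y\rVert_{L_q}$. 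Linearity of $y\mapsto f_y$ is obvious, injectivity follows because an isometry has trivial kernel, and the naturality asserted in the statement is visible from the formula, which involves no arbitrary choices. Thus the whole proposition reduces to proving surjectivity.

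For surjectivity, fix $f\in L_p(E,\mathbb C)^*$. Since $E\subseteq\mathbb R^c$ is measurable, Lebesgue measure is $\sigma$-finite on $E$, so we may write $E=\bigcup_n E_n$ with $E_n\subseteq E_{n+1}$ and $\lambda(E_n)<\infty$. For each $n$ I would define the set function $\nu_n(A)=f(\mathbf 1_A)$ on measurable $A\subseteq E_n$; here $\mathbf 1_A\in L_p$ because $\lambda(E_n)<\infty$ and $p<\infty$. Using continuity of $f$ together with the fact that for pairwise disjoint $A_k$ with $\bigcup_k A_k\subseteq E_n$ the partial sums of $\sum_k\mathbf 1_{A_k}$ converge to $\mathbf 1_{\bigcup_k A_k}$ in $L_p$ (Proposition~\ref{p:a.e. conv}, applicable since $p<\infty$ and $\lambda(\bigcup_k A_k)<\infty$), one checks that $\nu_n$ is a complex measure. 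Moreover $\lambda(A)=0$ makes $\mathbf 1_A=0$ in $L_p$ and hence $\nu_n(A)=0$, so $\nu_n\ll\lambda$. By the Radon--Nikodym theorem there is $y_n\in L_1(E_n,\mathbb C)$ with $\nu_n(A)=\int_A y_n\,d\lambda$ for every measurable $A\subseteq E_n$; uniqueness of the density forces $y_{n+1}=y_n$ a.e.\ on $E_n$, so the $y_n$ patch together to a measurable $y:\,E\to\mathbb C$, defined almost everywhere, with $f(\mathbf 1_A)=\int_A y\,d\lambda$ whenever $A$ lies in some $E_n$.

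By linearity, $f(x)=\int_E x(t)y(t)\,dt$ holds for every simple function $x$ supported in some $E_n$, and such functions are dense in $L_p$ because $p<\infty$. Hence it remains to prove $y\in\mathcal L_q(E,\mathbb C)$, for then $f$ and $f_y$ are continuous functionals on $L_p$ agreeing on a dense subspace, so they coincide, and $\lVert f\rVert=\lVert f_y\rVert=\lVert y\rVert_{L_q}$ by part (i). To bound $\lVert y\rVert_{L_q}$ I would argue by cases. If $1<q<\infty$, choose a bounded $x_n$, supported on $G_n=\{\lvert y\rvert\le n\}\cap E_n$, with $\lvert x_n\rvert=\lvert y\rvert^{q-1}$ and $x_ny=\lvert y\rvert^q$ on $G_n$; then $x_n\in L_p$ and $\int_{G_n}\lvert y\rvert^q\,d\lambda=f(x_n)\le\lVert f\rVert\,\lVert x_n\rVert_{L_p}=\lVert f\rVert\bigl(\int_{G_n}\lvert y\rvert^q\,d\lambda\bigr)^{1/p}$, so $\bigl(\int_{G_n}\lvert y\rvert^q\,d\lambda\bigr)^{1/q}\le\lVert f\rVert$, and letting $n\to\infty$ (monotone convergence) gives $\lVert y\rVert_{L_q}\le\lVert f\rVert$. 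If $q=\infty$, i.~e.\ $p=1$, then for each $\varepsilon>0$ the set $\{\lvert y\rvert\ge\lVert f\rVert+\varepsilon\}\cap E_n$ must be null, since otherwise testing $f$ against a normalized indicator of (a finite-measure piece of) it violates $\lvert f(x)\rvert\le\lVert f\rVert\lVert x\rVert_{L_1}$; hence $\lvert y\rvert\le\lVert f\rVert$ almost everywhere. I expect the surjectivity step to be the main obstacle, and inside it the honest verification that each $\nu_n$ is genuinely countably additive and the bookkeeping of the $\sigma$-finite exhaustion --- precisely the two places where $p<\infty$ and the $\sigma$-finiteness of Lebesgue measure are indispensable; once Radon--Nikodym is applicable it supplies the essential content, and the concluding $L_q$-estimate, though it splits into the two cases above, is routine.
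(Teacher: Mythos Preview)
The paper does not supply its own proof of this proposition; it is simply quoted from Bourbaki as a standard fact, with no argument given. Your proposal is the classical Radon--Nikodym proof of the Riesz representation theorem for $L_p$, and it is essentially correct and complete.

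One small slip is worth fixing. When you justify the $L_p$-convergence $\sum_{k\le N}\mathbf 1_{A_k}\to\mathbf 1_{\bigcup_k A_k}$ by invoking Proposition~\ref{p:a.e. conv}, note that the hypothesis of that proposition is $\sum_k\lVert\mathbf 1_{A_k}\rVert_{L_p}=\sum_k\lambda(A_k)^{1/p}<\infty$, which for $p>1$ is \emph{not} implied by $\sum_k\lambda(A_k)=\lambda\bigl(\bigcup_k A_k\bigr)<\infty$ (take $\lambda(A_k)=k^{-2}$ with $p\ge2$). The conclusion you need is nevertheless immediate from a direct computation: since the $A_k$ are pairwise disjoint,
\[
\Bigl\lVert\mathbf 1_{\bigcup_k A_k}-\sum_{k\le N}\mathbf 1_{A_k}\Bigr\rVert_{L_p}^{\,p}=\lambda\Bigl(\bigcup_{k>N}A_k\Bigr)\xrightarrow[N\to\infty]{}0.
\]
With that correction, your verification that each $\nu_n$ is countably additive, the patching over the $\sigma$-finite exhaustion, and the concluding $L_q$-bound in the two cases $1<q<\infty$ and $q=\infty$ are all sound.
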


\begin{proposition}[{\rm\cite[ch. 5, \S~8.4, Theorem 1]{Bourbaki-Int1-6:eng}}]\label{p:Fubini}
Let $z\in\mathcal L_1(E \times E,\mathbb C)$. Then the function $s\mapsto z(t,s)$ is integrable for almost all $t\in E$, the function $t\mapsto\int_{E}z(t,s)\,ds$ is also integrable, and
\begin{equation*}
\iint_{E\times E}z(t,s)\,dt\,ds
=\int_{E}dt\int_{E}z(t,s)\,ds.
\end{equation*}
\end{proposition}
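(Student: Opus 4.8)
The plan is to follow the classical route: prove the Tonelli statement for nonnegative measurable functions first, then deduce Fubini for complex $\mathcal L_1$ functions by decomposition. First I would reduce the complex case to the nonnegative one: writing $z=(\operatorname{Re}z)^+-(\operatorname{Re}z)^-+i(\operatorname{Im}z)^+-i(\operatorname{Im}z)^-$ and using linearity of each of the three integrals in the statement, it suffices to prove, for an arbitrary measurable $w:\,E\times E\to[0,+\infty]$, that the section $s\mapsto w(t,s)$ is measurable for almost every $t\in E$, that $t\mapsto\int_Ew(t,s)\,ds$ is measurable, and that $\iint_{E\times E}w=\int_E dt\int_Ew(t,s)\,ds$ in $[0,+\infty]$. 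Granting this, I would apply it to $w=(\operatorname{Re}z)^{\pm}$ and $w=(\operatorname{Im}z)^{\pm}$: since $\iint w\le\iint|z|<\infty$, the inner integral $\int_Ew(t,s)\,ds$ is finite for almost every $t$, so each section $z(t,\cdot)$ is integrable for almost every $t$ and $t\mapsto\int_Ez(t,s)\,ds$ is integrable; subtracting recovers the identity of the proposition.

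To prove the Tonelli statement I would use the monotone class (Dynkin) method. Since $E\subseteq\mathbb R^c$ carries $\sigma$-finite Lebesgue measure, the family of measurable sets $A\subseteq E\times E$ for which the conclusion holds with $w=\mathbf1_A$ is a Dynkin system --- it is closed under proper differences and increasing countable unions, the passages to the limit being justified by monotone (and, thanks to $\sigma$-finiteness, dominated) convergence --- and it contains the $\pi$-system of measurable rectangles $A_1\times A_2$, for which the assertion reduces to $\lambda(A_1)\lambda(A_2)=\int_{A_1}\lambda(A_2)\,dt$. By Dynkin's lemma it therefore contains the whole product $\sigma$-algebra. Linearity extends the conclusion to nonnegative measurable simple functions, and an arbitrary nonnegative measurable $w$ is an increasing pointwise limit of simple functions $s_n$; applying the monotone convergence theorem twice --- once to $\int_Es_n(t,s)\,ds\uparrow\int_Ew(t,s)\,ds$ for fixed $t$, once to the outer integral --- completes that step.

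The hard part will be the completion. The space $\mathcal L_1(E\times E,\mathbb C)$ is formed with respect to Lebesgue measure on $\mathbb R^{2c}$, which is the \emph{completion} of the product of the two copies of Lebesgue measure, not the product measure on the product $\sigma$-algebra; hence a Lebesgue measurable $z$ need only agree almost everywhere with a product-measurable function $\tilde z$, and the exceptional null set may have non-measurable or non-empty sections for $t$ in a negligible set. I would therefore first prove that a Lebesgue-null subset of $E\times E$ has $\lambda$-null sections for almost every $t$ --- again a monotone-class argument, now comparing the product measure with its completion --- then carry out the preceding argument for the product-measurable representative $\tilde z$, and finally note that for almost every $t$ the sections $z(t,\cdot)$ and $\tilde z(t,\cdot)$ coincide almost everywhere, so all three integrals agree. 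This bookkeeping with the completion, rather than the simple-function ladder, is where the real care is required; everything else is routine.
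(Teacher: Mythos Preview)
Your outline is the standard Tonelli-then-Fubini argument and is correct, including the point you flag about the completion of the product $\sigma$-algebra. However, the paper does not prove this proposition at all: it is stated with a citation to Bourbaki \cite[ch.~5, \S~8.4, Theorem~1]{Bourbaki-Int1-6:eng} and used as a black box (chiefly to derive Corollary~\ref{c:Fubini} and inside the proof of Theorem~\ref{t:nuclear oper in L_p}). So there is nothing to compare against --- you have supplied a self-contained proof where the authors simply quote the literature. If anything, your care with the completion issue is exactly the content of the Bourbaki reference, so your write-up is faithful to what is being invoked.
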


\begin{corollary}\label{c:Fubini}
Let $N\subset E\times E$ be negligible. Then the set $N_t=\{\,s\in E:\,(t,s)\in N\,\}$ is negligible for almost all $t\in E$.
\end{corollary}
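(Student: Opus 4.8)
The plan is to apply the Fubini theorem (Proposition~\ref{p:Fubini}) to the characteristic function $\chi_N$ of the set $N$. The first step is to observe that, since $N$ is negligible, $\chi_N$ coincides almost everywhere on $E\times E$ with the zero function; therefore $\chi_N\in\mathcal L_1(E\times E,\mathbb C)$ and
\[
\iint_{E\times E}\chi_N(t,s)\,dt\,ds=0.
\]

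Next I would invoke Proposition~\ref{p:Fubini} with $z=\chi_N$. It yields that the function $s\mapsto\chi_N(t,s)$ is integrable for almost all $t\in E$, that $t\mapsto\int_E\chi_N(t,s)\,ds$ is integrable on $E$, and that
\[
\int_E\Bigl(\int_E\chi_N(t,s)\,ds\Bigr)dt=\iint_{E\times E}\chi_N(t,s)\,dt\,ds=0.
\]
For a fixed $t$ one has $\chi_N(t,s)=\chi_{N_t}(s)$ as a function of $s$, because $(t,s)\in N\iff s\in N_t$; in particular the integrand $t\mapsto\int_E\chi_N(t,s)\,ds$ is nonnegative wherever it is finite.

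The key step is then the elementary fact that a nonnegative integrable function with vanishing integral is zero almost everywhere. Applying it to $t\mapsto\int_E\chi_N(t,s)\,ds$ gives $\int_E\chi_{N_t}(s)\,ds=0$ for almost all $t\in E$. For each such $t$ the function $\chi_{N_t}$ is nonnegative and integrable with zero integral, hence vanishes almost everywhere on $E$; this is precisely the statement that $N_t$ is negligible, which proves the corollary.

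The only point requiring care is bookkeeping within the integration-theoretic framework used in the paper: one must be sure that ``$N$ is negligible'' is exactly the assertion that $\chi_N=0$ almost everywhere (so that $\chi_N$ is a legitimate element of $\mathcal L_1(E\times E,\mathbb C)$ to which Proposition~\ref{p:Fubini} applies) and, symmetrically, that ``$N_t$ is negligible'' is the conclusion ``$\chi_{N_t}=0$ almost everywhere''. This is a matter of unwinding definitions rather than a genuine obstacle.
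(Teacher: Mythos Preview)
Your proof is correct and follows exactly the approach indicated in the paper: the paper's proof consists of the single sentence ``It is enough to apply Proposition~\ref{p:Fubini} to the characteristic function of the set $N$,'' and you have simply spelled out the routine details of that application.
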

\begin{proof}
It is enough to apply Proposition~\ref{p:Fubini} to the characteristic function of the set $N$.
\end{proof}

\begin{proposition}\label{p:L_p<L_1}
Let $E\subseteq\mathbb R^c$ be a summable set, with $\lambda(E)=M<\infty$. Let $p,q\in [1,+\infty]$ be conjugate exponents. Then $\mathcal L_p(E,\mathbb C)\subseteq\mathcal L_1(E,\mathbb C)$, and for all $x\in\mathcal L_p(E,\mathbb C)$ we have
\begin{equation*}
\lVert x\rVert_{L_1}\le M^{\frac1{q}}\lVert x\rVert_{L_p}.
\end{equation*}
\end{proposition}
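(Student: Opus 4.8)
The plan is to derive both assertions at once from Hölder's inequality (Proposition~\ref{p:Holder}), applied with the second function taken to be the function $y$ identically equal to $1$ on $E$. Since $\lambda(E)=M<\infty$, this $y$ belongs to $\mathcal L_q(E,\mathbb C)$ for every $q\in[1,+\infty]$, and
\begin{equation*}
\lVert y\rVert_{L_q}=\Bigl(\int_E 1\,dt\Bigr)^{1/q}=M^{1/q}\quad(q<\infty),\qquad \lVert y\rVert_{L_\infty}=1,
\end{equation*}
so that $\lVert y\rVert_{L_q}=M^{1/q}$ in every case, with the convention $M^{1/\infty}=M^0=1$.

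Next, I would fix $x\in\mathcal L_p(E,\mathbb C)$ and replace it by $|x|$, which again lies in $\mathcal L_p(E,\mathbb C)$ with $\lVert\,|x|\,\rVert_{L_p}=\lVert x\rVert_{L_p}$. Applying Proposition~\ref{p:Holder} to the pair $|x|\in\mathcal L_p(E,\mathbb C)$ and $y\in\mathcal L_q(E,\mathbb C)$ gives $|x|=|x|\,y\in\mathcal L_1(E,\mathbb C)$, hence $x\in\mathcal L_1(E,\mathbb C)$; this already yields $\mathcal L_p(E,\mathbb C)\subseteq\mathcal L_1(E,\mathbb C)$. Moreover,
\begin{equation*}
\lVert x\rVert_{L_1}=\int_E|x(t)|\,dt=\Bigl|\int_E|x(t)|\,y(t)\,dt\Bigr|\le\lVert\,|x|\,\rVert_{L_p}\cdot\lVert y\rVert_{L_q}=M^{1/q}\,\lVert x\rVert_{L_p},
\end{equation*}
which is the claimed inequality.

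No step presents a genuine obstacle: the whole proof is a one-line consequence of Hölder's inequality. The only minor point is the computation of $\lVert y\rVert_{L_q}$ in the endpoint case $q=\infty$ (equivalently $p=1$), where the inequality degenerates to the trivial identity $\lVert x\rVert_{L_1}\le\lVert x\rVert_{L_1}$; at the other endpoint $p=\infty$, $q=1$, it reads $\lVert x\rVert_{L_1}\le M\lVert x\rVert_{L_\infty}$, which is likewise contained in the same argument.
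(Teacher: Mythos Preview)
Your proof is correct and follows essentially the same approach as the paper: both apply H\"older's inequality (Proposition~\ref{p:Holder}) with the second function taken to be the constant function $1$ on $E$, compute its $L_q$-norm as $M^{1/q}$, and read off the inclusion and the norm estimate. The only cosmetic difference is that you pass explicitly to $|x|$ before invoking H\"older, whereas the paper writes $\lVert x\cdot 1\rVert_{L_1}\le\lVert x\rVert_{L_p}\lVert 1\rVert_{L_q}$ directly.
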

\begin{proof}
We denote by 1 the function that is identically equal to one. Obviously, $1\in\mathcal L_q(E,\mathbb C)$, and $\lVert 1\rVert_{L_q}=M^{\frac1{q}}$ when $q<\infty$, and $\lVert 1\rVert_{L_\infty}=1$. By Proposition~\ref{p:Holder}, for any function $x\in\mathcal L_p(E,\mathbb C)$, we have $x=x\cdot1\in\mathcal L_1$ and
\begin{equation*}
\lVert x\rVert_{L_1}=\lVert x\cdot1\rVert_{L_1}\le\lVert x\rVert_{L_p}\cdot\lVert 1\rVert_{L_q}
=M^{\frac1{q}}\lVert x\rVert_{L_p}.\qed
\end{equation*}
\renewcommand\qed{}
\end{proof}

The following theorem should be known, but we have not also found a relevant reference.

\begin{theorem}\label{t:nuclear oper in L_p}
Let $p,q\in [1,+\infty]$ be conjugate exponents, with $p<\infty$. Let $E\subseteq\mathbb R^c$ be a summable set, with $\lambda(E)=M<\infty$. Let an operator $A\in\mathfrak S_1\bigl(L_p(E,\mathbb C)\bigr)$ be represented in the form
\begin{equation*}
A=\sum\limits_{i=1}^\infty a_i\otimes y_i,
\end{equation*}
where $y_i\in\mathcal L_p(E,\mathbb C)$ and $a_i\in\mathcal L_q(E,\mathbb C)$, with
\begin{equation}\label{e:est:nuclear}
\sum_{i=1}^\infty\lVert a_i\rVert_{L_q}\;\lVert y_i\rVert_{L_p}<\infty.
\end{equation}
Then the series
\begin{equation*}
k(t,s)=\sum_{i=1}^\infty a_i(s)y_i(t)
\end{equation*}
absolutely converges in the norm of $L_1$ {\rm(}and consequently, by Proposition~\ref{p:a.e. conv}, converges almost everywhere on $E\times E${\rm)} to a function $k\in\mathcal L_1(E\times E,\mathbb C)$. Moreover,
\begin{equation*}
\lVert k\rVert_{L_1}\le M\sum_{i=1}^\infty\lVert a_i\rVert_{L_q}\lVert y_i\rVert_{L_p},
\end{equation*}
and for all $x\in L_p$ for almost all $t\in E$ we have
\begin{equation*}
\bigl(Ax\bigr)(t)=\int_Ek(t,s)\,x(s)\,ds.
\end{equation*}
\end{theorem}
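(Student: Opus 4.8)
The plan is to convert the given nuclear representation of $A$ into an integral kernel by tracking convergence at three levels: the product level, the kernel-series level, and the action-of-$A$ level. First I would fix the notation $k_i(t,s)=a_i(s)y_i(t)$ and estimate $\lVert k_i\rVert_{L_1(E\times E)}$. Since $a_i\in\mathcal L_q(E,\mathbb C)$, by Proposition~\ref{p:L_p<L_1} we have $a_i\in\mathcal L_1(E,\mathbb C)$ with $\lVert a_i\rVert_{L_1}\le M^{1/p}\lVert a_i\rVert_{L_q}$ (using that $1/q$ is the conjugate exponent to $p$, so the constant is $M^{1/p}$), and similarly $\lVert y_i\rVert_{L_1}\le M^{1/q}\lVert y_i\rVert_{L_p}$. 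Because $k_i$ is a product of a function of $s$ and a function of $t$, Tonelli's theorem (a special case of Proposition~\ref{p:Fubini} applied to $|k_i|$) gives $\lVert k_i\rVert_{L_1(E\times E)}=\lVert a_i\rVert_{L_1}\lVert y_i\rVert_{L_1}\le M^{1/p+1/q}\lVert a_i\rVert_{L_q}\lVert y_i\rVert_{L_p}=M\lVert a_i\rVert_{L_q}\lVert y_i\rVert_{L_p}$. Summing over $i$ and invoking hypothesis~\eqref{e:est:nuclear} shows $\sum_i\lVert k_i\rVert_{L_1}<\infty$, i.e.\ the series $\sum_i k_i$ converges absolutely in $L_1(E\times E,\mathbb C)$; by Proposition~\ref{p:a.e. conv} its sum $k$ lies in $\mathcal L_1(E\times E,\mathbb C)$, converges to $k$ in $L_1$-norm and almost everywhere, and satisfies $\lVert k\rVert_{L_1}\le\sum_i\lVert k_i\rVert_{L_1}\le M\sum_i\lVert a_i\rVert_{L_q}\lVert y_i\rVert_{L_p}$.

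Next I would establish the integral representation of $Ax$. Fix $x\in L_p(E,\mathbb C)$. For each $i$, the defining formula~\eqref{e:repr of A} gives the $i$-th term $a_i(x)y_i$, and since $a_i\in\mathcal L_q$ realizes a bounded functional on $L_p$ by Proposition~\ref{p:conj of L_p}, we have $a_i(x)=\int_E a_i(s)x(s)\,ds$, so the $i$-th term is $t\mapsto\bigl(\int_E a_i(s)x(s)\,ds\bigr)y_i(t)=\int_E k_i(t,s)x(s)\,ds$. The nuclear bound $\sum_i\lVert a_i\rVert_{L_q}\lVert y_i\rVert_{L_p}<\infty$ together with $|a_i(x)|\le\lVert a_i\rVert_{L_q}\lVert x\rVert_{L_p}$ shows $\sum_i\lVert a_i(x)y_i\rVert_{L_p}\le\lVert x\rVert_{L_p}\sum_i\lVert a_i\rVert_{L_q}\lVert y_i\rVert_{L_p}<\infty$, so by Proposition~\ref{p:a.e. conv} the series $\sum_i a_i(x)y_i$ converges in $L_p$-norm and almost everywhere to $(Ax)$. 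It remains to identify the almost-everywhere sum with $\int_E k(t,s)x(s)\,ds$. For this, observe that $s\mapsto k(t,s)x(s)$ is integrable for almost every $t$: indeed $k(t,\cdot)\in\mathcal L_1(E,\mathbb C)$ for a.e.\ $t$ by Corollary~\ref{c:Fubini}/Proposition~\ref{p:Fubini} applied to $k$, but we need integrability against $x\in\mathcal L_p$, so I would instead apply Proposition~\ref{p:Fubini} directly to the function $(t,s)\mapsto k(t,s)x(s)$ after checking it lies in $\mathcal L_1(E\times E,\mathbb C)$ — which follows because $\sum_i|k_i(t,s)x(s)|$ has finite double integral $\le M^{1/p}\lVert x\rVert_{L_p}\sum_i\lVert a_i\rVert_{L_q}\lVert y_i\rVert_{L_1}$ by the same product/Tonelli argument. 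Then Fubini lets me interchange summation and integration: $\int_E k(t,s)x(s)\,ds=\sum_i\int_E k_i(t,s)x(s)\,ds=\sum_i a_i(x)y_i(t)=(Ax)(t)$ for almost every $t$.

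The main obstacle I anticipate is the careful handling of the exceptional null sets and the order of summation/integration. Convergence almost everywhere of $\sum_i k_i$ to $k$ holds only off some null set $N\subset E\times E$; convergence of $\sum_i a_i(x)y_i$ to $Ax$ holds only off a null set $N'\subset E$ (depending on $x$); and the Fubini identity $\int_E k(t,s)x(s)\,ds=\sum_i\int_E k_i(t,s)x(s)\,ds$ holds only for $t$ outside yet another null set. One must check that for a.e.\ fixed $t$, the slice $N_t=\{s:(t,s)\in N\}$ is null (Corollary~\ref{c:Fubini}), so that the termwise-in-$i$ identities $k(t,s)=\sum_i k_i(t,s)$ hold for a.e.\ $s$ and can be integrated against $x(s)$. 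Assembling these three a.e.-statements into a single a.e.-statement about $t$ is routine but requires attention; none of it presents a serious difficulty once the integrability of $(t,s)\mapsto k(t,s)x(s)$ over $E\times E$ is in hand, which is the key quantitative input and follows from Proposition~\ref{p:L_p<L_1} plus the nuclear estimate~\eqref{e:est:nuclear}.
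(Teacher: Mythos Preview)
Your proposal is correct and follows essentially the same route as the paper: estimate $\lVert k_i\rVert_{L_1(E\times E)}$ via Proposition~\ref{p:L_p<L_1}, sum to get $k\in\mathcal L_1$ with the stated bound, then identify $(Ax)(t)$ with $\int_E k(t,s)x(s)\,ds$ by combining H\"older, Proposition~\ref{p:a.e. conv}, and Fubini (Proposition~\ref{p:Fubini}/Corollary~\ref{c:Fubini}). The paper carries out the last step by explicitly naming the full-measure sets $F$, $G$, $H$ and their slices rather than packaging the interchange as ``$(t,s)\mapsto k(t,s)x(s)\in\mathcal L_1(E\times E)$ and apply Fubini,'' but this is a stylistic difference, not a different argument; your final paragraph already anticipates exactly this null-set bookkeeping.
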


\begin{proof}
We denote by $A_i$ the operator $a_i\otimes y_i$, $i\in\mathbb N$. By Proposition~\ref{p:conj of L_p},
\begin{equation*}
\bigl(A_ix\bigr)(t)=y_i(t)\,\int_Ea_i(s)\,x(s)\,ds=\int_Ey_i(t)\,a_i(s)\,x(s)\,ds,\qquad t\in E.
\end{equation*}
The integral $\int_Ea_i(s)\,x(s)\,ds$ exists for all $x\in\mathcal L_p$ and is not changed when $a_i\in\mathcal L_q$ and $x\in\mathcal L_p$ are replaced by equivalent functions; the entire right-hand side is defined at the points $t$ at which the function $y_i\in\mathcal L_p$ is defined.

By Proposition~\ref{p:L_p<L_1}, the function
\begin{equation*}
k_i(t,s)=y_i(t)\,a_i(s)
\end{equation*}
belongs to $\mathcal L_1(E\times E,\mathbb C)$, with
\begin{align*}
\lVert k_i\rVert_{L_1}&=\lVert y_i(\cdot)a_i(\cdot\cdot)\rVert_{L_1}
=\lVert y_i(\cdot)\rVert_{L_1}\cdot\lVert a_i(\cdot\cdot)\rVert_{L_1}\\
&\le M^{\frac1{p}}\lVert a_i\rVert_{L_q}\cdot M^{\frac1{q}}\lVert y_i\rVert_{L_p}
=M\lVert a_i\rVert_{L_q}\cdot\lVert y_i\rVert_{L_p}.
\end{align*}
By estimate~\eqref{e:est:nuclear}, the series $\sum_{i=1}^\infty k_i$ converges absolutely in the semi-norm of the space $\mathcal L_1(E\times E,\mathbb C)$ to some function $k\in\mathcal L_1(E\times E,\mathbb C)$. By Proposition~\ref{p:a.e. conv}, the series $\sum_{i=1}^\infty k_i(t,s)$ converges absolutely to $k(t,s)$ for almost all $(t,s)\in E\times E$.

We denote by $F\subseteq E\times E$ the set of all points $(t,s)$ such that the series
\begin{equation*}
\sum_{i=1}^\infty k_i(t,s)=\sum_{i=1}^\infty y_i(t)\,a_i(s)
\end{equation*}
converges absolutely to $k(t,s)$. By the proved above, $F$ is a set of full measure, i.~e. $\lambda\bigl((E\times E)\setminus F\bigr)=0$. By Corollary~\ref{c:Fubini}, where exists a set $H\subseteq E$ of full measure such that the set $H_t=\{\,s\in E:\,(t,s)\in F\,\}$ is a set of full measure for all $t\in H$.

We take an arbitrary function $x\in\mathcal L_p(E,\mathbb C)$. We represent $A_ix$ in the form
\begin{equation*}
\bigl(A_ix\bigr)(t)=y_i(t)\int_Ea_i(s)\,x(s)\,ds.
\end{equation*}
By Proposition~\ref{p:Holder},
\begin{equation*}
\lVert A_ix\rVert_{L_p}\le\lVert y_i\rVert_{L_p}\cdot\lVert a_i\rVert_{L_q}\cdot\lVert x\rVert_{L_p}.
\end{equation*}
It is seen from this estimate and~\eqref{e:est:nuclear} that the series
\begin{equation*}
\sum_{i=1}^\infty A_ix=\sum_{i=1}^\infty y_i\int_Ea_i(s)\,x(s)\,ds
\end{equation*}
converges in the norm of $L_p$. By similar reasons, the series
\begin{equation*}
\sum_{i=1}^\infty |y_i|\int_E|a_i|(s)\,|x|(s)\,ds
\end{equation*}
also converges in the norm of $L_p$, where the symbol $|z|$ denotes the function $|z|(t)=\bigl|z(t)\bigr|$. From here and Proposition~\ref{p:a.e. conv}, it follows that
there exists a set $G$ of full measure such that the series
\begin{equation*}
\sum_{i=1}^\infty |y_i|(t)\int_E|a_i|(s)\,|x|(s)\,ds
=\sum_{i=1}^\infty |y_i|(t)\cdot\bigl\lVert |a_i|\cdot|x|\bigr\rVert_{L_1}
\end{equation*}
converges for all $t\in G$. This means that for $t\in G$ the series
\begin{equation}\label{e:sum yax}
\sum_{i=1}^\infty |y_i|(t)\cdot|a_i|\cdot|x|,
\end{equation}
which consists of the functions $|a_i|\cdot|x|\in\mathcal L_1(E,\mathbb C)$ (Proposition~\ref{p:Holder}) with the coefficients $|y_i|(t)$, converges in the $L_1$-norm. Therefore, for $t\in G$, the series
\begin{equation*}
\sum_{i=1}^\infty y_i(t)\cdot a_i\cdot x,
\end{equation*}
converges absolutely in the $L_1$-norm. By Proposition~\ref{p:a.e. conv}, for each $t\in G$, its sum coincides with the function
\begin{equation*}
s\mapsto\sum_{i=1}^\infty y_i(t)\;a_i(s)\cdot x(s)
\end{equation*}
for all $s\in G_t$, where $G_t\subset E$ is a set of full measure.

On the other hand, for all $t\in H$ and $s\in H_t$, we have
\begin{equation*}
\sum_{i=1}^\infty y_i(t)\;a_i(s)=k(t,s).
\end{equation*}
Consequently, for all $t\in H$ and $s\in H_t\cap D(x)$ (where $D(x)\subseteq E$ is the domain of $x$), we have
\begin{equation*}
\sum_{i=1}^\infty y_i(t)\;a_i(s)\cdot x(s)=k(t,s)\cdot x(s).
\end{equation*}
Then, a fortiori, for all $t\in G\cap H$ and $s\in H_t\cap D(x)$,
\begin{equation*}
\sum_{i=1}^\infty y_i(t)\;a_i(s)\cdot x(s)=k(t,s)\cdot x(s).
\end{equation*}

By the definition of nuclear operator, (for almost all $t$) we have
\begin{equation*}
\bigl(Ax\bigr)(t)=\sum_{i=1}^\infty \bigl(A_ix\bigr)(t)
=\sum_{i=1}^\infty y_i(t)\int_Ea_i(s)\,x(s)\,ds.
\end{equation*}
But for $t\in G\cap H$,
\begin{align*}
\sum_{i=1}^\infty y_i(t)\int_Ea_i(s)\,x(s)\,ds
&=\sum_{i=1}^\infty \int_E y_i(t)a_i(s)\,x(s)\,ds\\
&=\int_E\sum_{i=1}^\infty  y_i(t)a_i(s)\,x(s)\,ds\\
&=\int_E k(t,s)\,x(s)\,ds.\qed
\end{align*}
\renewcommand\qed{}
\end{proof}

\section{Locally nuclear operators}\label{s:locally nuclear:na}

\begin{definition}\label{def:loc nuk}
Let $g$ be a weight on $\mathbb Z^c$ satisfying assumptions {\rm(a)--(e)} from Definition~\ref{def:weight}.
Let $X$ be a Banach space.
We denote by $\mathoo s_{1,g}\bigl(\mathbb Z^c,\mathfrak S_1(X)\bigr)$ the set of all operators $T\in\mathoo B\bigl(l_p(\mathbb Z^c,X)\bigr)$ that can be represented in the form
\begin{equation}\label{e:operator D:2}
(Tx)_k=\sum\limits_{m\in\mathbb Z^c} b_{km}x_{k-m},\qquad k\in\mathbb Z^c,
\end{equation}
where $b_{km}\in\mathfrak S_1(X)$ satisfy the estimate
\begin{equation*}
\lVert b_{km}\rVert_{\mathfrak S_1(X)}\le\beta_{m}
\end{equation*}
for some $\beta\in l_{1,g}(\mathbb Z^c,\mathbb C)$, cf. Definition~\ref{def:s_{1,g}}.
By Proposition~\ref{p:l_{1,g} is an algebra}, $\mathoo s_{1,g}\bigl(\mathbb Z^c,\mathfrak S_1(X)\bigr)$ forms an algebra; this algebra has no unit if $X$ is infinite-dimensional (Corollary~\ref{c:nuclear is proper}). We call operators $T\in\mathoo s_{1,g}\bigl(\mathbb Z^c,\mathfrak S_1(X)\bigr)$ \emph{locally nuclear}.
\end{definition}

\begin{proposition}\label{p:K is an ideal in l_1}
The subalgebra $\mathoo s_{1,g}\bigl(\mathbb Z^c,\mathfrak S_1(X)\bigr)$ of locally nuclear operators forms an ideal in the algebra $\mathoo s_{1,g}\bigl(\mathbb Z^c,\mathoo B(X)\bigr)$.
\end{proposition}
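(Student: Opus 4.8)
The plan is to prove the ideal property directly: compute the matrix of a product $AT$ (and $TA$), show its entries are nuclear, and dominate their nuclear norms by the convolution of the two dominating sequences, which again lies in $l_{1,g}$.

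First note that $\mathoo s_{1,g}\bigl(\mathbb Z^c,\mathfrak S_1(X)\bigr)$ is a linear subspace of $\mathoo s_{1,g}\bigl(\mathbb Z^c,\mathoo B(X)\bigr)$: the inclusion of sets follows from $\lVert\cdot\rVert_{\mathoo B(X)}\le\lVert\cdot\rVert_{\mathfrak S_1(X)}$ (inequality~\eqref{e:norm S_1>norm}), and closedness under addition and scalar multiplication is immediate since $\mathfrak S_1(X)$ and $l_{1,g}(\mathbb Z^c,\mathbb C)$ are linear spaces. Hence it remains to show that if $T\in\mathoo s_{1,g}\bigl(\mathbb Z^c,\mathfrak S_1(X)\bigr)$ and $A\in\mathoo s_{1,g}\bigl(\mathbb Z^c,\mathoo B(X)\bigr)$, then both $AT$ and $TA$ lie in $\mathoo s_{1,g}\bigl(\mathbb Z^c,\mathfrak S_1(X)\bigr)$.

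Write $T$ in the form~\eqref{e:operator D:2} with coefficients $b_{km}\in\mathfrak S_1(X)$, $\lVert b_{km}\rVert_{\mathfrak S_1(X)}\le\beta_m$, $\beta\in l_{1,g}(\mathbb Z^c,\mathbb C)$, and $A$ in the form~\eqref{e:operator D} with coefficients $a_{km}\in\mathoo B(X)$, $\lVert a_{km}\rVert_{\mathoo B(X)}\le\alpha_m$, $\alpha\in l_{1,g}(\mathbb Z^c,\mathbb C)$. For $x\in l_p(\mathbb Z^c,X)$ one has
\begin{equation*}
(ATx)_k=\sum_{m\in\mathbb Z^c}a_{km}(Tx)_{k-m}=\sum_{m\in\mathbb Z^c}\sum_{n\in\mathbb Z^c}a_{km}\,b_{k-m,n}\,x_{k-m-n},\qquad k\in\mathbb Z^c.
\end{equation*}
This double series converges absolutely in $X$: the term at $(m,n)$ is bounded in norm by $\alpha_m\beta_n\lVert x_{k-m-n}\rVert_X$, and summing over all $(m,n)$ yields $\bigl((\alpha*\beta)*z\bigr)_k<\infty$, where $z_j=\lVert x_j\rVert_X$ and $\alpha*\beta\in l_1(\mathbb Z^c,\mathbb C)$ (same estimate as in Proposition~\ref{l:ClassSg}). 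We may therefore reindex by $l=m+n$ and obtain $(ATx)_k=\sum_{l\in\mathbb Z^c}d_{kl}\,x_{k-l}$ with $d_{kl}=\sum_{m\in\mathbb Z^c}a_{km}\,b_{k-m,\,l-m}$.

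By Proposition~\ref{p:Pietch 6.3.2}, each summand $a_{km}b_{k-m,l-m}$ is nuclear with $\lVert a_{km}b_{k-m,l-m}\rVert_{\mathfrak S_1(X)}\le\lVert a_{km}\rVert_{\mathoo B(X)}\,\lVert b_{k-m,l-m}\rVert_{\mathfrak S_1(X)}\le\alpha_m\beta_{l-m}$. Since $\mathfrak S_1(X)$ is complete in the nuclear norm (Proposition~\ref{p:Pietch 6.3.2}) and $\sum_{m}\alpha_m\beta_{l-m}=(\alpha*\beta)_l<\infty$, the series defining $d_{kl}$ converges in $\mathfrak S_1(X)$; thus $d_{kl}\in\mathfrak S_1(X)$ and $\lVert d_{kl}\rVert_{\mathfrak S_1(X)}\le(\alpha*\beta)_l$. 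By Proposition~\ref{p:l_{1,g} is an algebra}, $l_{1,g}(\mathbb Z^c,\mathbb C)$ is closed under convolution, so $\alpha*\beta\in l_{1,g}(\mathbb Z^c,\mathbb C)$. Hence $AT\in\mathoo B\bigl(l_p(\mathbb Z^c,X)\bigr)$ has the form~\eqref{e:operator D:2} with nuclear coefficients dominated by an element of $l_{1,g}$, i.e. $AT\in\mathoo s_{1,g}\bigl(\mathbb Z^c,\mathfrak S_1(X)\bigr)$. The case of $TA$ is identical: its matrix entries are $d'_{kl}=\sum_m b_{km}\,a_{k-m,\,l-m}$, nuclear by Proposition~\ref{p:Pietch 6.3.2} with $\lVert d'_{kl}\rVert_{\mathfrak S_1(X)}\le(\beta*\alpha)_l=(\alpha*\beta)_l$ by commutativity of convolution on $\mathbb Z^c$. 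The only step that is not routine bookkeeping is the interchange of the two summations in the expression for $(ATx)_k$, and this is justified by the absolute-convergence estimate $\bigl((\alpha*\beta)*z\bigr)_k<\infty$; the rest follows from the sub-multiplicativity of the nuclear norm (Proposition~\ref{p:Pietch 6.3.2}) together with the fact that $l_{1,g}(\mathbb Z^c,\mathbb C)$ is a convolution algebra (Proposition~\ref{p:l_{1,g} is an algebra}).
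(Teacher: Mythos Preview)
Your proof is correct and follows essentially the same approach as the paper: compute the matrix entries of the product as an absolutely convergent double sum, reindex, and use Proposition~\ref{p:Pietch 6.3.2} together with Proposition~\ref{p:l_{1,g} is an algebra} to bound the nuclear norms by $(\alpha*\beta)_l$. The only cosmetic differences are that the paper justifies the interchange of summation via the cruder observation $l_p\subseteq l_\infty$ (so $\{x_i\}$ is bounded) rather than your sharper estimate $\bigl((\alpha*\beta)*z\bigr)_k<\infty$, and the paper treats $KT$ explicitly with $K$ nuclear while you treat $AT$ with $T$ nuclear; both orderings are handled symmetrically in each proof.
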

\begin{proof}
As was noted above, $\mathoo s_{1,g}\bigl(\mathbb Z^c,\mathfrak S_1(X)\bigr)$ is a subalgebra of $\mathoo s_{1,g}\bigl(\mathbb Z^c,\mathoo B(X)\bigr)$.
Thus, it remains to show that $K\in\mathoo s_{1,g}\bigl(\mathbb Z^c,\mathfrak S_1(X)\bigr)$ and $T\in\mathoo s_{1,g}\bigl(\mathbb Z^c,\mathoo B(X)\bigr)$ imply $KT,TK\in\mathoo s_{1,g}\bigl(\mathbb Z^c,\mathfrak S_1(X)\bigr)$.

So, let $K\in\mathoo s_{1,g}\bigl(\mathbb Z^c,\mathfrak S_1(X)\bigr)$ and $T\in\mathoo s_{1,g}\bigl(\mathbb Z^c,\mathoo B(X)\bigr)$.
These mean that $K$ and $T$ admit the representations
\begin{equation*}
(Kx)_k=\sum\limits_{m\in\mathbb Z^c} a_{km}x_{k-m},\qquad
(Tx)_k=\sum\limits_{l\in\mathbb Z^c} b_{km}x_{k-m},\qquad k\in\mathbb Z^c,
\end{equation*}
where
\begin{equation*}
\lVert a_{km}\rVert_{\mathfrak S_1(X)}\le\alpha_{m},\qquad
\lVert b_{km}\rVert_{\mathoo B(X)}\le\beta_{m},
\end{equation*}
with $\alpha,\beta\in l_{1,g}(\mathbb Z^c,\mathbb C)$. By the definition of the product of operators, for any $x\in l_p(\mathbb Z^c,X)$, we have
\begin{equation*}
(KTx)_k=\sum\limits_{m\in\mathbb Z^c} a_{km}(Tx)_{k-m}
=\sum\limits_{m\in\mathbb Z^c} a_{km}
\sum\limits_{l\in\mathbb Z^c} b_{k-m,l}x_{k-m-l}
,\qquad k\in\mathbb Z^c.
\end{equation*}
Since $l_p(\mathbb Z^c,X)\subseteq l_\infty(\mathbb Z^c,X)$, the family $\{x_i:\;i\in\mathbb Z^c\}$ is bounded. Therefore the latter (double) series converges absolutely (for a fixed $k$). Consequently, any rearrangement of the series converges to the same sum.

We make the change $l=r-m$ in the internal sum:
\begin{equation*}
(KTx)_k=\sum\limits_{m\in\mathbb Z^c} a_{km}
\sum\limits_{r\in\mathbb Z^c} b_{k-m,r-m}x_{k-r}
,\qquad k\in\mathbb Z^c.
\end{equation*}
After that we interchange the order of summation:
\begin{equation}\label{e:KT}
(KTx)_k=\sum\limits_{r\in\mathbb Z^c}\Bigl(\sum\limits_{m\in\mathbb Z^c} a_{km}
 b_{k-m,r-m}\Bigr)x_{k-r}
,\qquad k\in\mathbb Z^c.
\end{equation}
By the estimate form Proposition~\ref{p:Pietch 6.3.2}, we have
\begin{equation*}
\lVert a_{km}b_{k-m,r-m}\rVert_{\mathfrak S_1(X)}\le
	\lVert a_{km}\rVert_{\mathfrak S_1(X)}\cdot\lVert b_{k-m,r-m}\rVert_{\mathoo B(X)}
\le\alpha_m\beta_{r-m}.
\end{equation*}
Therefore,
\begin{equation*}
\sum\limits_{m\in\mathbb Z^c} \lVert a_{km}b_{k-m,r-m}\rVert_{\mathfrak S_1(X)}
\le\sum\limits_{m\in\mathbb Z^c}\alpha_m\beta_{r-m}=(\alpha*\beta)_r.
\end{equation*}
The last estimate shows that the series $\sum\limits_{m\in\mathbb Z^c} a_{km} b_{k-m,r-m}$ converges absolutely in the norm $\lVert\cdot\rVert_{\mathfrak S_1}$. By the completeness of the ideal $\mathfrak S_1(X)$ (Proposition~\ref{p:Pietch 6.3.2}), this implies that the sum $\sum\limits_{m\in\mathbb Z^c} a_{km} b_{k-m,r-m}$ belongs to $\mathfrak S_1(X)$ and
\begin{equation*}
\Bigl\lVert \sum\limits_{m\in\mathbb Z^c} a_{km}b_{k-m,r-m}\Bigr\rVert_{\mathfrak S_1(X)}
\le(\alpha*\beta)_r.
\end{equation*}
By Proposition~\ref{p:l_{1,g} is an algebra}, $\alpha*\beta\in l_{1,g}(\mathbb Z^c,\mathbb C)$. Hence it follows from formula~\eqref{e:KT} that $KT\in\mathoo s_{1,g}\bigl(\mathbb Z^c,\mathfrak S_1(X)\bigr)$.

Similarly, one verifies that $TK\in\mathoo s_{1,g}\bigl(\mathbb Z^c,\mathfrak S_1(X)\bigr)$.
\end{proof}

We denote by $\widetilde{\mathoo s_{1,g}}\bigl(\mathbb Z^c,\mathfrak S_1(X)\bigr)$ the subalgebra derived from $\mathoo s_{1,g}\bigl(\mathbb Z^c,\mathfrak S_1(X)\bigr)$ by adjoining the unit element of the algebra $\mathoo B\bigl(l_p(\mathbb Z^c,X)\bigr)$ if $X$ is infinite-dimensional; and we denote by $\widetilde{\mathoo s_{1,g}}\bigl(\mathbb Z^c,\mathfrak S_1(X)\bigr)$ the algebra $\mathoo s_{1,g}\bigl(\mathbb Z^c,\mathfrak S_1(X)\bigr)$ itself provided that $X$ is finite-dimensional.

The following theorem is the main result of the present paper.
\begin{theorem}\label{t:fin:convolution dominayed:nuclear}
The subalgebra $\widetilde{\mathoo s_{1,g}}\bigl(\mathbb Z^c,\mathfrak S_1(X)\bigr)$ is full in the algebra $\mathoo B\bigl(l_p(\mathbb Z^c,X)\bigr)$ for all $1\le p\le\infty$.
\end{theorem}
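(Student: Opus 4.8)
The plan is to deduce the theorem from two facts established above: the subalgebra $\mathoo s_{1,g}(\mathbb Z^c,\mathoo B(X))$ is full in $\mathoo B(l_p(\mathbb Z^c,X))$ (Theorem~\ref{t:s_g is full in s}), and $\mathoo s_{1,g}(\mathbb Z^c,\mathfrak S_1(X))$ is an ideal of $\mathoo s_{1,g}(\mathbb Z^c,\mathoo B(X))$ (Proposition~\ref{p:K is an ideal in l_1}). If $X$ is finite-dimensional then $\mathfrak S_1(X)=\mathoo B(X)$, $\widetilde{\mathoo s_{1,g}}(\mathbb Z^c,\mathfrak S_1(X))=\mathoo s_{1,g}(\mathbb Z^c,\mathoo B(X))$, and the assertion is exactly Theorem~\ref{t:s_g is full in s}; so I would assume $X$ infinite-dimensional, in which case $\widetilde{\mathoo s_{1,g}}(\mathbb Z^c,\mathfrak S_1(X))$ is the subalgebra $\{\lambda\mathbf1+T:\lambda\in\mathbb C,\ T\in\mathoo s_{1,g}(\mathbb Z^c,\mathfrak S_1(X))\}$ of $\mathoo B(l_p(\mathbb Z^c,X))$.

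First I would note that this subalgebra is contained in $\mathoo s_{1,g}(\mathbb Z^c,\mathoo B(X))$: the identity operator is generated by the matrix $b_{km}=\delta_{m0}\mathbf1_X$, which satisfies~\eqref{e:beta-est} with the sequence $\delta$ supported at $0$, while a locally nuclear $T$ has $\lVert b_{km}\rVert_{\mathoo B(X)}\le\lVert b_{km}\rVert_{\mathfrak S_1(X)}\le\beta_m$ by~\eqref{e:norm S_1>norm}, so $\lambda\mathbf1+T$ has its $\mathoo B(X)$-matrix norms dominated by $|\lambda|\delta+\beta\in l_{1,g}(\mathbb Z^c,\mathbb C)$. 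Now let $A=\lambda\mathbf1+T\in\widetilde{\mathoo s_{1,g}}(\mathbb Z^c,\mathfrak S_1(X))$, with $T$ locally nuclear, be invertible in $\mathoo B(l_p(\mathbb Z^c,X))$; by Theorem~\ref{t:s_g is full in s} the inverse $D=A^{-1}$ then lies in $\mathoo s_{1,g}(\mathbb Z^c,\mathoo B(X))$. The one delicate point is to check that $\lambda\ne0$: if $\lambda=0$, the invertible operator $A=T$ would lie in the ideal $\mathoo s_{1,g}(\mathbb Z^c,\mathfrak S_1(X))$, so $\mathbf1=TD$ would lie in that ideal as well, and comparing the matrix entries at $m=0$ would force $\mathbf1_X\in\mathfrak S_1(X)$, contradicting Corollary~\ref{c:nuclear is proper}.

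With $\lambda\ne0$ in hand, I would rewrite $AD=\mathbf1$ as $\lambda D=\mathbf1-TD$, that is, $D=\frac1\lambda\mathbf1-\frac1\lambda TD$. Since $T\in\mathoo s_{1,g}(\mathbb Z^c,\mathfrak S_1(X))$ and $D\in\mathoo s_{1,g}(\mathbb Z^c,\mathoo B(X))$, Proposition~\ref{p:K is an ideal in l_1} gives $TD\in\mathoo s_{1,g}(\mathbb Z^c,\mathfrak S_1(X))$; hence $-\frac1\lambda TD$ is locally nuclear and $D=\frac1\lambda\mathbf1+(-\frac1\lambda TD)\in\widetilde{\mathoo s_{1,g}}(\mathbb Z^c,\mathfrak S_1(X))$, which is exactly what is required.

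I expect the substance of the proof to reside entirely in the two imported results rather than in this concluding deduction. Theorem~\ref{t:s_g is full in s} is the deepest ingredient; it rests on the Bochner--Phillips/Gelfand analysis of $l_{1,g}$ (Theorem~\ref{t:Bochner-Phillips:K}, Corollary~\ref{cor:l1gfull}) together with Theorem~\ref{t:5.2.6}, and Proposition~\ref{p:K is an ideal in l_1} relies on the multiplicativity estimate for the nuclear norm. The feature specific to the present statement --- and the reason the proof of Theorem~\ref{t:nuclear is full} cannot be copied verbatim --- is that a locally nuclear operator need not be compact, so connectedness of the resolvent set is unavailable; the non-invertibility that is actually needed, namely $\lambda\ne0$, has to be extracted from the ideal structure together with the fact that $\mathbf1_X\notin\mathfrak S_1(X)$.
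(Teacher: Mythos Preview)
Your proof is correct and follows essentially the same approach as the paper: reduce to infinite-dimensional $X$, use Theorem~\ref{t:s_g is full in s} to place the inverse in $\mathoo s_{1,g}(\mathbb Z^c,\mathoo B(X))$, and then invoke the ideal property of Proposition~\ref{p:K is an ideal in l_1} together with $\lambda\neq0$ to conclude. The only cosmetic difference is that the paper packages the last step via the quotient morphism $\mathoo s_{1,g}(\mathbb Z^c,\mathoo B(X))\to\mathoo s_{1,g}(\mathbb Z^c,\mathoo B(X))/\mathoo s_{1,g}(\mathbb Z^c,\mathfrak S_1(X))$ (reading off both $\lambda\neq0$ and the form of the inverse from $\varphi((\lambda\mathbf1+T)^{-1})=\tfrac1\lambda\mathbf1$), whereas you unwind the same computation explicitly as $D=\tfrac1\lambda\mathbf1-\tfrac1\lambda TD$; your argument is in fact a bit more explicit about why the ideal is proper.
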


\begin{proof}
Without loss of generality (Corollary~\ref{c:nuclear is proper}), we assume that $X$ is infinite-dimensional.

As a first step, we show that the subalgebra $\widetilde{\mathoo s_{1,g}}\bigl(\mathbb Z^c,\mathfrak S_1(X)\bigr)$ is full in the algebra $\mathoo s_{1,g}\bigl(\mathbb Z^c,\,\mathoo B(X)\bigr)$.
Let an operator $\lambda\mathbf1+T$, where $T\in\mathoo s_{1,g}\bigl(\mathbb Z^c,\mathfrak S_1(X)\bigr)$, be invertible in $\mathoo s_{1,g}\bigl(\mathbb Z^c,\,\mathoo B(X)\bigr)$. We consider (Proposition~\ref{p:K is an ideal in l_1}) the quotient morphism of algebras
\begin{equation*}
\varphi:\,\mathoo s_{1,g}\bigl(\mathbb Z^c,\,\mathoo B(X)\bigr)\to\mathoo s_{1,g}\bigl(\mathbb Z^c,\,\mathoo B(X)\bigr)/\mathoo s_{1,g}\bigl(\mathbb Z^c,\mathfrak S_1(X)\bigr).
\end{equation*}
By the definition of quotient morphism, we have
\begin{equation*}
\varphi(\lambda\mathbf1_{\mathoo s_{1,g}(\mathbb Z^c,\,\mathoo B(X))}+T)=\lambda\mathbf1_{\mathoo s_{1,g}(\mathbb Z^c,\,\mathoo B(X))/\mathoo s_{1,g}(\mathbb Z^c,\mathfrak S_1(X))}.
\end{equation*}
The element $\lambda\mathbf1_{\mathoo s_{1,g}(\mathbb Z^c,\,\mathoo B(X))/\mathoo s_{1,g}(\mathbb Z^c,\mathfrak S_1(X))}$ is invertible by
Proposition~\ref{p:morphism of algebras}, since the element $\lambda\mathbf1+T$ is invertible. Therefore, $\lambda\neq0$. Consequently,
\begin{align*}
\bigl(\varphi(\lambda\mathbf1+T)\bigr)^{-1}&=\varphi\bigl((\lambda\mathbf1+T)^{-1}\bigr)=(\lambda\mathbf1_{\mathoo s_{1,g}(\mathbb Z^c,\,\mathoo B(X))/\mathoo s_{1,g}(\mathbb Z^c,\mathfrak S_1(X))})^{-1}\\
&=\frac1\lambda\mathbf1_{\mathoo s_{1,g}(\mathbb Z^c,\,\mathoo B(X))/\mathoo s_{1,g}(\mathbb Z^c,\mathfrak S_1(X))}.
\end{align*}
This equality implies that $(\lambda\mathbf1+T)^{-1}=\frac1\lambda\mathbf1+T_1$, where $T_1\in\mathoo s_{1,g}\bigl(\mathbb Z^c,\mathfrak S_1(X)\bigr)$, which means that $(\lambda\mathbf1+T)^{-1}\in\widetilde{\mathoo s_{1,g}}\bigl(\mathbb Z^c,\mathfrak S_1(X)\bigr)$.
	
To complete the proof, it is sufficient to recall that the subalgebra $\mathoo s_{1,g}\bigl(\mathbb Z^c,\mathoo B(X)\bigr)$ is full in the algebra $\mathoo B\bigl(l_p(\mathbb Z^c,X)\bigr)$ by Theorem~\ref{t:s_g is full in s}.
\end{proof}

\section{Locally nuclear operators in the spaces $L_p$}\label{s:locally nuclear:na:L_p}
We represent the set $\mathbb R^c$ as the disjoint union (i.~e. a union of disjoint sets)
\begin{equation*}
\mathbb R^c=\bigsqcup_{m\in\mathbb Z^c}[0,1)^c+m,
\end{equation*}
where
\begin{equation*}
[0,1)^c+m=\bigsqcup_{m=(m_1,m_2,\dots,m_c)\in\mathbb Z^c}
[m_1,m_1+1)\times[m_2,m_2+1)\times\dots\times[m_c,m_c+1)
\end{equation*}
and $m=(m_1,m_2,\dots,m_c)$.

\begin{proposition}\label{p:R c via Z c}
The following properties hold.
\begin{enumerate}
 \item[{\rm(a)}] A set $E\subseteq\mathbb R^c$ is measurable if and only if its intersection with each of the sets $[0,1)^c+m$, $m\in\mathbb Z^c$, is summable.
 \item[{\rm(b)}] A set $N\subseteq\mathbb R^c$ is negligible if and only if its intersection with each of the sets $[0,1)^c+m$, $m\in\mathbb Z^c$, is negligible.
 \item[{\rm(c)}] A function $x:\,\mathbb R^c\to\mathbb C$ is measurable if and only if its restriction to each of the sets $[0,1)^c+m$, $m\in\mathbb Z^c$, is measurable.
 \item[{\rm(d)}] A function $x:\,\mathbb R^c\to\mathbb C$ is negligible if and only if its restriction to each of the sets $[0,1)^c+m$, $m\in\mathbb Z^c$, is negligible.
\end{enumerate}
\end{proposition}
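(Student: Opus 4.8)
The plan is to derive all four statements from two elementary stability properties of the Lebesgue measure on $\mathbb{R}^c$: a countable union of measurable (resp.\ negligible) sets is measurable (resp.\ negligible), and the restriction of a measurable set or a measurable function to a measurable subset is measurable. Since $\mathbb{Z}^c$ is countable and the half-open cubes $[0,1)^c+m$, $m\in\mathbb{Z}^c$, form a partition of $\mathbb{R}^c$ into summable sets, these two properties apply at once to the cover appearing in the statement.

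First I would treat (a) and (b). For (a): if $E$ is measurable, then each $E\cap([0,1)^c+m)$ is measurable and is contained in the summable set $[0,1)^c+m$, hence summable; conversely, if every $E\cap([0,1)^c+m)$ is summable (in particular measurable), then $E=\bigcup_{m\in\mathbb{Z}^c}\bigl(E\cap([0,1)^c+m)\bigr)$ is a countable union of measurable sets, hence measurable. For (b): the ``only if'' part is immediate because a subset of a negligible set is negligible, and the ``if'' part follows from $N=\bigcup_{m\in\mathbb{Z}^c}\bigl(N\cap([0,1)^c+m)\bigr)$ being a countable union of negligible sets.

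Next I would treat (c) and (d). For (c), the restriction of a measurable function to a measurable set is measurable, which gives one direction; for the converse, if each $x|_{[0,1)^c+m}$ is measurable, then for every Borel set $B\subseteq\mathbb{C}$ one has $x^{-1}(B)=\bigcup_{m\in\mathbb{Z}^c}\bigl(x|_{[0,1)^c+m}\bigr)^{-1}(B)$, a countable union of measurable subsets of $\mathbb{R}^c$, so $x$ is measurable (alternatively, one assembles the compact sets furnished by Lusin's criterion applied on each cube). Statement (d) then follows by applying (b) to the set $\{\,t:x(t)\neq0\,\}$, using that its intersection with $[0,1)^c+m$ is exactly $\{\,t\in[0,1)^c+m:x(t)\neq0\,\}$. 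I expect the only delicate point to be matching these arguments with the precise form of measurability adopted elsewhere (the Bourbaki step-function/Lusin characterization), but since the countable-union and restriction stabilities hold in every such formulation, this is purely routine bookkeeping rather than a genuine obstacle.
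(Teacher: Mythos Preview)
Your proposal is correct and matches the paper's approach: the paper simply states that the proof is straightforward, and what you have written is precisely the routine argument (countable partition into summable cubes, stability of measurability and negligibility under restriction and countable union) that justifies this. There is nothing to add.
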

\begin{proof}
The proof is straightforward.
\end{proof}

\begin{proposition}[{\rm\cite{Fournier-Stewart85}, \cite[1.6.3]{Kurbatov99}}]\label{p:L_p viz l_p}
Let $1\le p\le\infty$. Then the mapping $\varphi:\,\mathcal L_p(\mathbb R^c,\mathbb C)\to l_p\bigl(\mathbb Z^c,\mathcal L_p([0,1)^c,\mathbb C)\bigr)$ given by the rule $\varphi(x)=\{x_m\}$, where
\begin{equation*}
x_m(t)=x(t+m),\qquad t\in[0,1)^c,
\end{equation*}
defines {\rm(}after identifying of equivalent functions{\rm)} an isometric isomorphism $\varphi:\,L_p(\mathbb R^c,\mathbb C)\to l_p\bigl(\mathbb Z^c,L_p([0,1)^c,\mathbb C)\bigr)$ {\rm(}which we denote by the same symbol $\varphi${\rm)}.
\end{proposition}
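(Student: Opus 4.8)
The plan is to show directly that $\varphi$ is a well-defined linear isometry of $L_p(\mathbb R^c,\mathbb C)$ onto $l_p\bigl(\mathbb Z^c,L_p([0,1)^c,\mathbb C)\bigr)$; since an isometry is automatically injective, this already yields that $\varphi$ is an isometric isomorphism. The whole argument amounts to passing back and forth between $\mathbb R^c$ and its decomposition into the lattice translates of the unit cube, and Proposition~\ref{p:R c via Z c} is precisely the tool that legitimizes every such passage (for measurability, for negligibility, and for essential suprema).

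First I would work with representatives. Given a measurable $x:\mathbb R^c\to\mathbb C$, each function $x_m(t)=x(t+m)$, $t\in[0,1)^c$, is measurable: it is the restriction of $x$ to $[0,1)^c+m$ (measurable by Proposition~\ref{p:R c via Z c}(c)) precomposed with the translation $t\mapsto t+m$. For $1\le p<\infty$, using the disjoint decomposition $\mathbb R^c=\bigsqcup_{m\in\mathbb Z^c}\bigl([0,1)^c+m\bigr)$, the countable additivity of the integral applied to the non-negative function $|x|^p$, and the translation invariance of the Lebesgue measure, one obtains
\begin{equation*}
\lVert x\rVert_{L_p(\mathbb R^c)}^p=\int_{\mathbb R^c}|x(t)|^p\,dt=\sum_{m\in\mathbb Z^c}\int_{[0,1)^c+m}|x(t)|^p\,dt=\sum_{m\in\mathbb Z^c}\int_{[0,1)^c}|x_m(t)|^p\,dt=\lVert\varphi(x)\rVert_{l_p}^p,
\end{equation*}
so $\varphi(x)\in l_p$ and $\lVert\varphi(x)\rVert_{l_p}=\lVert x\rVert_{L_p}$. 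For $p=\infty$ the same statement reads $\lVert x\rVert_{L_\infty(\mathbb R^c)}=\sup_{m\in\mathbb Z^c}\lVert x_m\rVert_{L_\infty([0,1)^c)}$, and it follows from Proposition~\ref{p:R c via Z c}(b): for $r>0$ the set $\{t:|x(t)|>r\}$ is negligible if and only if each of its intersections with the cubes $[0,1)^c+m$ is negligible, i.e. $r$ is an essential bound for $|x|$ on $\mathbb R^c$ exactly when it is one on every cube. Linearity of $\varphi$ is immediate from the pointwise definition.

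Next I would check that $\varphi$ descends to the spaces of equivalence classes: if $x=y$ almost everywhere on $\mathbb R^c$, the function $x-y$ is negligible, hence by Proposition~\ref{p:R c via Z c}(d) its restriction to each cube is negligible, so $x_m=y_m$ almost everywhere on $[0,1)^c$ for every $m$. Thus $\varphi$ is well defined on $L_p(\mathbb R^c,\mathbb C)$, and the norm identity above makes it isometric, hence injective. For surjectivity, given a family $\{x_m\}\in l_p\bigl(\mathbb Z^c,L_p([0,1)^c,\mathbb C)\bigr)$ I would glue the representatives: put $x(t)=x_m(t-m)$ for $t\in[0,1)^c+m$. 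This is unambiguously defined on all of $\mathbb R^c$ since the cubes partition $\mathbb R^c$, it is measurable by Proposition~\ref{p:R c via Z c}(c) (its restriction to each cube is $x_m$ precomposed with a translation), and the same computation as above gives $\lVert x\rVert_{L_p}=\lVert\{x_m\}\rVert_{l_p}<\infty$, so $x\in L_p(\mathbb R^c,\mathbb C)$ and $\varphi(x)=\{x_m\}$.

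There is no serious obstacle here; the statement is essentially bookkeeping. The only points deserving a moment's care are: the legitimacy of all the lattice-decomposition manipulations, which is exactly what Proposition~\ref{p:R c via Z c} supplies; the case $p=\infty$, where the sum in the norm identity is replaced by a supremum and one argues through the negligible sets $\{|x|>r\}$ rather than through an integral; and the interchange of summation and integration in the displayed chain, which is merely the countable additivity of the integral of the non-negative integrand $|x|^p$. Translation invariance of the Lebesgue measure is what identifies the norm of $x_m$ with the norm of $x$ restricted to the $m$-th cube.
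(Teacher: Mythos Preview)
Your proposal is correct and follows essentially the same approach as the paper's proof: establish the norm identity via the lattice decomposition of $\mathbb R^c$ (treating $p<\infty$ and $p=\infty$ separately), note linearity, deduce injectivity from the isometry, construct the inverse by gluing $x(t)=x_m(t-m)$ on the cubes, and invoke Proposition~\ref{p:R c via Z c}(d) to pass to equivalence classes. You are somewhat more explicit than the paper about which parts of Proposition~\ref{p:R c via Z c} are used where and about the role of translation invariance, but the argument is the same.
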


\begin{proof}
So, we consider the mapping $\varphi:\,\mathcal L_p(\mathbb R^c,\mathbb C)\to l_p\bigl(\mathbb Z^c,\mathcal L_p([0,1)^c,\mathbb C)\bigr)$ defined by the rule $\varphi(x)=\{x_m\}$, where
\begin{equation*}
x_m(t)=x(t+m),\qquad t\in[0,1)^c.
\end{equation*}
Clearly, for any $x\in\mathcal L_p(\mathbb R^c,\mathbb C)$, the sequence $\{x_m\}$ consists of measurable functions and
\begin{equation*}
\lVert x\rVert_{L_p}=\bigl\lVert \{\lVert x_m\rVert_{L_p}\}\bigr\rVert_{l_p}
\end{equation*}
or, in more detail,
\begin{align*}
\lVert x\rVert_{L_p}=\sqrt[p]{\int_{\mathbb R^c}|x(t)|^p\,dt}
&=\sqrt[p]{\sum_{m\in\mathbb Z^c}\int_{[0,1)^c}|x_m(t)|^p\,dt},&& p<\infty,\\
\lVert x\rVert_{L_\infty}=\esssup_{t\in\mathbb R^c}|x(t)|&=\sup_{m\in\mathbb Z^c}\esssup_{t\in[0,1)^c}|x_m(t)|,&& p=\infty.
\end{align*}
Thus, $\varphi$ in fact acts from $\mathcal L_p(\mathbb R^c,\mathbb C)$ to $l_p\bigl(\mathbb Z^c,\mathcal L_p([0,1)^c,\mathbb C)\bigr)$ and preserves the norm.

The linearity of $\varphi$ is evident. The preservation of the norm implies that $\varphi$ is injective.

Let $\{x_m\}\in l_p\bigl(\mathbb Z^c,\mathcal L_p([0,1)^c,\mathbb C)\bigr)$. Obviously, the sequence $\{x_m\}$ is the inverse image of the function
\begin{equation*}
x(t)=x_m(t-m)\qquad\text{when } t\in [0,1)^c+m.
\end{equation*}
Thus, $\varphi$ is surjective.

By Proposition~\ref{p:R c via Z c}(d), a measurable function $x$ is negligible if and only if all members of the sequence $\varphi(x)=\{x_m\}$ are negligible functions.
Therefore, $\varphi$ generates an isomorphic isomorphism $\varphi:\,L_p(\mathbb R^c,\mathbb C)\to l_p\bigl(\mathbb Z^c,L_p([0,1)^c,\mathbb C)\bigr)$.
\end{proof}

\begin{definition}\label{def:loc nuk:L}
Since the spaces $L_p(\mathbb R^c,\mathbb C)$ and $l_p\bigl(\mathbb Z^c,L_p([0,1)^c,\mathbb C)\bigr)$ are isomorphic, the algebras of operators $\mathoo B\bigl(L_p(\mathbb R^c,\mathbb C)\bigr)$ and $\mathoo B\bigl(l_p\bigl(\mathbb Z^c,L_p([0,1)^c,\mathbb C)\bigr)\bigr)$ are isomorphic as well.
We denote by $\mathoo S_{1,g}(\mathbb R^c,\mathfrak S_1)=\mathoo S_{1,g}\bigl(\mathbb R^c,\mathfrak S_1\bigl(L_p([0,1)^c,\mathbb C)\bigr)\bigr)$, $1\le p\le\infty$,
the set of all operators $A\in\mathoo B\bigl(L_p(\mathbb R^c,\mathbb C)\bigr)$ that correspond to operators of the class $\mathoo s_{1,g}\bigl(\mathbb Z^c,\mathfrak S_1\bigl(L_p([0,1)^c,\mathbb C)\bigr)\bigr)$ according to the isomorphism $\varphi$ described in Proposi\-tion~\ref{p:L_p viz l_p}. More precisely, an operator $A$ belongs to the class $\mathoo S_{1,g}(\mathbb R^c,\mathfrak S_1)$ if and only if the operator $T=\varphi A\varphi^{-1}$, which renders commutative the diagram
\begin{equation}\label{e:CD}
\begin{CD}
L_p @>\varphi>> l_p\\
@VV{A}V @VV{T}V\\
L_p @>\varphi>> l_p,
\end{CD}
\end{equation}
belongs to the class $\mathoo s_{1,g}\bigl(\mathbb Z^c,\mathfrak S_1\bigl(L_p([0,1)^c,\mathbb C)\bigr)\bigr)$. We call operators belonging to the class $\mathoo S_{1,g}(\mathbb R^c,\mathfrak S_1)$ \emph{locally nuclear} as well.
\end{definition}

We denote by $\widetilde{\mathoo S_{1,g}}(\mathbb R^c,\mathfrak S_1)$ the subalgebra derived from $\mathoo S_{1,g}(\mathbb R^c,\mathfrak S_1)$ by adjoining the unit element of the algebra $\mathoo B\bigl(L_p(\mathbb R^c,X)\bigr)$.

The following theorem is the most interesting special case of Theorem~\ref{t:fin:convolution dominayed:nuclear}.

\begin{theorem}\label{t:convolution dominayed:nuclear:2}
The subalgebra $\widetilde{\mathoo S_{1,g}}(\mathbb R^c,\mathfrak S_1)$ is full in the algebra $\mathoo B\bigl(L_p(\mathbb R^c,X)\bigr)$ for all $1\le p\le\infty$.
\end{theorem}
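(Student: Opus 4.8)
The plan is to deduce Theorem~\ref{t:convolution dominayed:nuclear:2} from the main Theorem~\ref{t:fin:convolution dominayed:nuclear} by transporting everything through the isometric isomorphism $\varphi\colon L_p(\mathbb R^c,\mathbb C)\to l_p\bigl(\mathbb Z^c,L_p([0,1)^c,\mathbb C)\bigr)$ furnished by Proposition~\ref{p:L_p viz l_p}. Since $\varphi$ is an isometric isomorphism of Banach spaces, the map
\[
\Phi\colon\mathoo B\bigl(L_p(\mathbb R^c,\mathbb C)\bigr)\to\mathoo B\Bigl(l_p\bigl(\mathbb Z^c,L_p([0,1)^c,\mathbb C)\bigr)\Bigr),\qquad\Phi(A)=\varphi A\varphi^{-1},
\]
is an isometric isomorphism of unital Banach algebras (it is linear, multiplicative, sends $\mathbf1$ to $\mathbf1$, and has inverse $T\mapsto\varphi^{-1}T\varphi$); this is precisely the correspondence $A\leftrightarrow T$ recorded in diagram~\eqref{e:CD}. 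By the very Definition~\ref{def:loc nuk:L}, $\Phi$ carries $\mathoo S_{1,g}(\mathbb R^c,\mathfrak S_1)$ bijectively onto $\mathoo s_{1,g}\bigl(\mathbb Z^c,\mathfrak S_1\bigl(L_p([0,1)^c,\mathbb C)\bigr)\bigr)$, and therefore maps $\widetilde{\mathoo S_{1,g}}(\mathbb R^c,\mathfrak S_1)$ onto $\widetilde{\mathoo s_{1,g}}\bigl(\mathbb Z^c,\mathfrak S_1\bigl(L_p([0,1)^c,\mathbb C)\bigr)\bigr)$, the adjoined units corresponding because $\Phi(\mathbf1)=\mathbf1$.

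Next I would apply Theorem~\ref{t:fin:convolution dominayed:nuclear} with the Banach space $X:=L_p\bigl([0,1)^c,\mathbb C\bigr)$: it asserts that $\widetilde{\mathoo s_{1,g}}\bigl(\mathbb Z^c,\mathfrak S_1(X)\bigr)$ is full in $\mathoo B\bigl(l_p(\mathbb Z^c,X)\bigr)$. Fullness then transfers back along the isomorphism $\Phi$. Indeed, let $A\in\widetilde{\mathoo S_{1,g}}(\mathbb R^c,\mathfrak S_1)$ be invertible in $\mathoo B\bigl(L_p(\mathbb R^c,\mathbb C)\bigr)$. Applying Proposition~\ref{p:morphism of algebras} to the morphism $\Phi$, the element $\Phi(A)$ is invertible in $\mathoo B\bigl(l_p(\mathbb Z^c,X)\bigr)$; since $\Phi(A)\in\widetilde{\mathoo s_{1,g}}\bigl(\mathbb Z^c,\mathfrak S_1(X)\bigr)$ and this subalgebra is full, we obtain $\Phi(A)^{-1}\in\widetilde{\mathoo s_{1,g}}\bigl(\mathbb Z^c,\mathfrak S_1(X)\bigr)$. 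Applying $\Phi^{-1}$ (again a unital algebra morphism) we conclude $A^{-1}=\Phi^{-1}\bigl(\Phi(A)^{-1}\bigr)\in\widetilde{\mathoo S_{1,g}}(\mathbb R^c,\mathfrak S_1)$, which is exactly the required inverse-closedness; in particular, if $\mathbf1+A$ is invertible with $A\in\mathoo S_{1,g}(\mathbb R^c,\mathfrak S_1)$, its inverse has the form $\mathbf1+A_1$ with $A_1$ locally nuclear.

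I expect no genuine obstacle here: the entire substance of the result was already packed into Theorem~\ref{t:fin:convolution dominayed:nuclear}, and the present statement is merely its restatement under the identification of Proposition~\ref{p:L_p viz l_p}. The only points needing (routine) care are checking that $\Phi$ really is a unital algebra isomorphism and that it respects the adjunction of the unit element — both of which are immediate once Proposition~\ref{p:L_p viz l_p} and Definition~\ref{def:loc nuk:L} are in place.
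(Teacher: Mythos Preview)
Your proposal is correct and follows precisely the approach intended by the paper: the paper's own proof is the single line ``The proof follows from Theorem~\ref{t:fin:convolution dominayed:nuclear},'' and you have merely spelled out the routine transport along the isomorphism $\varphi$ of Proposition~\ref{p:L_p viz l_p} that makes this deduction valid.
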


\begin{proof}
The proof follows from Theorem~\ref{t:fin:convolution dominayed:nuclear}.
\end{proof}

\begin{theorem}\label{t:loc nuclear oper in L_p}
Let $1\le p<\infty$. Then, for each operator $A\in\mathoo S_{1,g}(\mathbb R^c,\mathfrak S_1)$, there exists a measurable function
\begin{equation*}
n:\,\mathbb R^c\times \mathbb R^c\to\mathbb C
\end{equation*}
such that for any $x\in L_p(\mathbb R^c,\mathbb C)$ at almost all points $t\in\mathbb R^c$ {\rm(}the following integral exists and{\rm)}
\begin{equation*}
\bigl(Ax\bigr)(t)=\int_{\mathbb R^c}n(t,s)\,x(s)\,ds.
\end{equation*}
\end{theorem}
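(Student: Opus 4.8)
The plan is to transport $A$ through the isometry $\varphi$ of Proposition~\ref{p:L_p viz l_p} to the operator $T=\varphi A\varphi^{-1}\in\mathoo s_{1,g}\bigl(\mathbb Z^c,\mathfrak S_1(L_p([0,1)^c,\mathbb C))\bigr)$, to realize the coefficients of $T$ as integral kernels on the unit cube by means of Theorem~\ref{t:nuclear oper in L_p}, and then to glue these kernels together. Write $(Tx)_k=\sum_{m\in\mathbb Z^c}b_{km}x_{k-m}$ with $b_{km}\in\mathfrak S_1\bigl(L_p([0,1)^c,\mathbb C)\bigr)$ and $\lVert b_{km}\rVert_{\mathfrak S_1}\le\beta_m$, where $\beta\in l_{1,g}(\mathbb Z^c,\mathbb C)\subseteq l_1(\mathbb Z^c,\mathbb C)$. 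For each pair $k,m$ fix a nuclear representation $b_{km}=\sum_i a^{(km)}_i\otimes y^{(km)}_i$ with $a^{(km)}_i\in\mathcal L_q([0,1)^c,\mathbb C)$, $y^{(km)}_i\in\mathcal L_p([0,1)^c,\mathbb C)$ and $\sum_i\lVert a^{(km)}_i\rVert_{L_q}\lVert y^{(km)}_i\rVert_{L_p}\le 2\beta_m$ (such a representation exists because $\lVert\cdot\rVert_{\mathfrak S_1}$ is an infimum and $(L_p)^*\cong L_q$ isometrically by Proposition~\ref{p:conj of L_p}, $q$ being the exponent conjugate to $p$). Applying Theorem~\ref{t:nuclear oper in L_p} with $E=[0,1)^c$ (so $M=1$) yields $k_{km}\in\mathcal L_1\bigl([0,1)^c\times[0,1)^c,\mathbb C\bigr)$, namely $k_{km}(t',s')=\sum_i y^{(km)}_i(t')a^{(km)}_i(s')$, with $\lVert k_{km}\rVert_{L_1}\le 2\beta_m$ and
\begin{equation*}
(b_{km}u)(t')=\int_{[0,1)^c}k_{km}(t',s')\,u(s')\,ds'\qquad\text{for every }u\in L_p([0,1)^c,\mathbb C)\text{ and almost all }t'.
\end{equation*}
One then defines $n:\,\mathbb R^c\times\mathbb R^c\to\mathbb C$ by $n(t,s)=k_{k,\,k-l}(t-k,\,s-l)$ whenever $t\in[0,1)^c+k$ and $s\in[0,1)^c+l$.

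Measurability of $n$ follows from Proposition~\ref{p:R c via Z c}(c): identifying $\mathbb R^c\times\mathbb R^c$ with $\mathbb R^{2c}$, the products $([0,1)^c+k)\times([0,1)^c+l)$ are exactly the $\mathbb Z^{2c}$-translates of $[0,1)^{2c}$, and the restriction of $n$ to each of them is measurable because $k_{k,k-l}$ is; the short proof of Proposition~\ref{p:R c via Z c} carries over verbatim to dimension $2c$.

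For the integral formula, fix $x\in L_p(\mathbb R^c,\mathbb C)$, put $y=\varphi(x)$ (so $y_j(s')=x(s'+j)$) and $z=Ty$, so that $(Ax)(t)=z_k(t-k)$ for $t\in[0,1)^c+k$. By Proposition~\ref{l:ClassSg} the series $z_k=\sum_m b_{km}y_{k-m}$ converges absolutely in $L_p([0,1)^c,\mathbb C)$, hence also absolutely in $L_1$ (as $\lVert\cdot\rVert_{L_1}\le\lVert\cdot\rVert_{L_p}$ on $[0,1)^c$ by Proposition~\ref{p:L_p<L_1}), and thus pointwise a.e.\ by Proposition~\ref{p:a.e. conv}; substituting the kernels gives, for almost all $t'$,
\begin{equation*}
z_k(t')=\sum_{m\in\mathbb Z^c}\int_{[0,1)^c}k_{km}(t',s')\,y_{k-m}(s')\,ds'.
\end{equation*}
In the $m$-th term one substitutes $s=s'+(k-m)$ and sets $l=k-m$; by the definition of $n$ this term equals $\int_{[0,1)^c+l}n(t'+k,s)\,x(s)\,ds$. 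The interchange of $\sum_m$ with $\int_{\mathbb R^c}$ is legitimate because the bound $|k_{km}(t',s')|\le\sum_i|y^{(km)}_i(t')|\,|a^{(km)}_i(s')|$ (valid a.e., from the construction of $k_{km}$) gives $\int_{[0,1)^c}|k_{km}(t',s')|\,|y_{k-m}(s')|\,ds'\le\phi_{km}(t')\lVert y_{k-m}\rVert_{L_p}$ with $\phi_{km}(t')=\sum_i|y^{(km)}_i(t')|\,\lVert a^{(km)}_i\rVert_{L_q}$, $\lVert\phi_{km}\rVert_{L_1}\le 2\beta_m$; hence $\sum_m\lVert\phi_{km}\rVert_{L_1}\lVert y_{k-m}\rVert_{L_p}\le 2(\beta\ast z')_k<\infty$, where $z'_j=\lVert y_j\rVert_{L_p}$ defines an element of $l_p(\mathbb Z^c,\mathbb C)\subseteq l_\infty(\mathbb Z^c,\mathbb C)$ and $\beta\in l_1(\mathbb Z^c,\mathbb C)$. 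So $\sum_m\phi_{km}(t')\lVert y_{k-m}\rVert_{L_p}<\infty$ for almost all $t'$; a Tonelli argument then shows $n(t'+k,\cdot)\,x(\cdot)\in L_1(\mathbb R^c)$ for a.e.\ $t'$, and countable additivity of the integral identifies $\sum_l\int_{[0,1)^c+l}n(t'+k,s)x(s)\,ds$ with $\int_{\mathbb R^c}n(t'+k,s)x(s)\,ds$. Therefore $(Ax)(t)=z_k(t-k)=\int_{\mathbb R^c}n(t,s)\,x(s)\,ds$ for almost all $t\in[0,1)^c+k$, and the union over $k\in\mathbb Z^c$ of the (countably many) exceptional sets is still negligible.

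The gluing and the changes of variables are routine; the step that genuinely needs care is the exchange of the summation over $m$ with the integration over $\mathbb R^c$. This hinges on the quantitative $L_1$-control $\lVert k_{km}\rVert_{L_1}\le 2\beta_m$ (equivalently $\lVert\phi_{km}\rVert_{L_1}\le 2\beta_m$) furnished by Theorem~\ref{t:nuclear oper in L_p} together with $\beta\in l_1$, and on bookkeeping the countably many, $x$-dependent, null sets outside of which the various pointwise identities hold.
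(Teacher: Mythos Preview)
Your proof is correct and follows essentially the same route as the paper: transport through $\varphi$, apply Theorem~\ref{t:nuclear oper in L_p} block by block to obtain kernels on the unit cube, and glue via $n(t,s)=k_{k,k-l}(t-k,s-l)$. In fact you are more careful than the paper on two points it leaves implicit: the measurability of the glued kernel $n$ (which you handle via Proposition~\ref{p:R c via Z c}), and the legitimacy of passing from $\sum_m\int_{[0,1)^c+k-m}$ to $\int_{\mathbb R^c}$, which you justify by the quantitative control $\lVert\phi_{km}\rVert_{L_1}\le 2\beta_m$ together with $\beta\in l_1$.
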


\begin{proof}
We consider the operator $T=\varphi A\varphi^{-1}$ rendering commutative diagram~\eqref{e:CD}.
By definition, $T\in\mathoo s_{1,g}\bigl(\mathbb Z^c,\mathfrak S_1\bigl(L_p([0,1)^c,\mathbb C)\bigr)\bigr)$. We restore the operator $A$ by means of the formula $A=\varphi^{-1} T\varphi$.
In accordance with Proposition~\ref{p:L_p viz l_p}, we assign to the function $x$ the family $\{x_m\}=\varphi(x)$:
\begin{equation*}
x_m(t)=x(t+m),\qquad t\in[0,1)^c,\;m\in\mathbb Z^c.
\end{equation*}
We denote briefly the family $\{x_m\}=\varphi(x)$ by $z$.
We apply the operator $T$ to $z$. By virtue of~\eqref{e:operator D:2},
\begin{equation*}
(Tz)_k=\sum\limits_{m\in\mathbb Z^c} b_{km}x_{k-m},\qquad k\in\mathbb Z^c,
\end{equation*}
where $b_{km}\in\mathfrak S_1\bigl(L_p([0,1)^c,\mathbb C)\bigr)$,
and the series converges absolutely; more precisely,
\begin{equation*}
\|b_{km}\|\le\lVert b_{km}\rVert_{\mathfrak S_1}\le\beta_{m}.
\end{equation*}

By Proposition~\ref{p:a.e. conv}, for almost all $t\in[0,1)^c$, we have
\begin{equation*}
\bigl((Tz)_k\bigr)(t)=\sum\limits_{m\in\mathbb Z^c} \bigl(b_{km}x_{k-m}\bigr)(t),\qquad k\in\mathbb Z^c.
\end{equation*}
Applying to the sequence $\{(Tz)_i\}$ the isomorphism $\varphi^{-1}$, we arrive at the function $Ax$. According to the previous formula,
\begin{align*}
\bigl(Ax\bigr)(t)&=\bigl(\varphi^{-1}\bigl(\{(Tz)_i\}\bigr)\bigr)(t)=\bigl[(Tz)_k\bigr](t-k)\\
&=\sum\limits_{m\in\mathbb Z^c}\bigl(b_{km}x_{k-m}\bigr)(t-k),\qquad t\in[0,1)^c+k.
\end{align*}
We consider the blocks $b_{km}\in\mathoo B\bigl(L_p([0,1)^c,\mathbb C)\bigr)$, $k,m\in\mathbb Z^c$, which constitute the operator $T$. By Theorem~\ref{t:nuclear oper in L_p}, for each of them there exists a measurable function $n_{km}:\,[0,1)^c\times[0,1)^c\to\mathbb C$ such that for any $u\in L_p([0,1)^c,\mathbb C)$ at almost all  $t\in[0,1)^c$ {\rm(}the following integral exists and{\rm)}
\begin{equation*}
\bigl(b_{km}u\bigr)(t)=\int_{[0,1)^c}n_{km}(t,s)\,u(s)\,ds.
\end{equation*}
In particular, we have (almost everywhere)
\begin{equation*}
\bigl(b_{km}x_{k-m}\bigr)(t-k)=\int_{[0,1)^c}n_{km}(t-k,s)\,x_{k-m}(s)\,ds,\qquad t\in[0,1)^c+k,
\end{equation*}
or
\begin{align*}
\bigl(b_{km}x_{k-m}\bigr)(t-k)&=\int_{[0,1)^c+k-m}n_{km}(t-k,\sigma-k+m)\,x_{k-m}(\sigma-k+m)\,d\sigma\\
&=\int_{[0,1)^c+k-m}n_{km}(t-k,\sigma-k+m)\,x(\sigma)\,d\sigma,\qquad t\in[0,1)^c+k.
\end{align*}
Hence, for almost all $t\in[0,1)^c+k$ (which implies that for almost all $t\in\mathbb R^c$)
\begin{align*}
\bigl(Ax\bigr)(t)&=\sum\limits_{m\in\mathbb Z^c}\bigl(b_{km}x_{k-m}\bigr)(t-k)\\
&=\sum\limits_{m\in\mathbb Z^c}\int_{[0,1)^c+k-m}n_{km}(t-k,s-k+m)\,x(s)\,ds\\
&=\int_{\mathbb R^c}n(t,s)\,x(s)\,ds,
\end{align*}
where
\begin{equation*}
n(t,s)=n_{km}(t-k,s-k+m),\qquad t\in[0,1)^c+k,\;s\in[0,1)^c+k-m,
\end{equation*}
or
\begin{equation*}
n(t,s)=n_{km}(t-k,s-l),\qquad t\in[0,1)^c+k,\;s\in[0,1)^c+l.\qed
\end{equation*}
\renewcommand\qed{}
\end{proof}

\providecommand{\bysame}{\leavevmode\hbox to3em{\hrulefill}\thinspace}
\providecommand{\MR}{\relax\ifhmode\unskip\space\fi MR }
\providecommand{\MRhref}[2]{%
  \href{http://www.ams.org/mathscinet-getitem?mr=#1}{#2}
}
\providecommand{\href}[2]{#2}

\end{document}